\theoremstyle{definition}
\newtheorem{definition}{Definition}[section]
\theoremstyle{plain}
\newtheorem{theorem}[definition]{Theorem}
\newtheorem{lemma}[definition]{Lemma}
\newtheorem{proposition}[definition]{Proposition}
\newtheorem{remark}[definition]{Remark}
\theoremstyle{remark}
\begin{document}

\title{The mixed Hilden braid group and the plat equivalence in  handlebodies} 

\author[P. Cavicchioli]{Paolo Cavicchioli}
\address{Dipartimento di Matematica,
Università di Bologna, Piazza di Porta S. Donato, 5, 40126, Bologna, Italy. }
\email{paolo.cavicchioli@unibo.it}
\urladdr{https://www.unibo.it/sitoweb/paolo.cavicchioli}

\author[S. Lambropoulou]{Sofia Lambropoulou}
\address{School of Applied Mathematical and Physical Sciences, 
National Technical University of Athens, 
Zografou Campus, 9 Iroon Polytechneiou st., 15772 Athens, Greece.}
\email{sofia@math.ntua.gr}
\urladdr{http://www.math.ntua.gr/~sofia}


 \keywords{links in the handlebody, mixed braid group, plat closure of mixed braids, mixed Markov braid group, equivalence theorem.}

\subjclass[2010]{57K10, 57K12}

\begin{abstract}
Given a knot or link in the handlebody, $H_g$, of genus $g$ we prove that it can always be represented as the plat closure of a braid in $H_g$.  We further establish the Hilden braid group for the handlebody, as a subgroup of the mixed braid group, whose elements have the first $g$ strands fixed forming the identity braid. We then formulate and prove the algebraic equivalence connecting mixed plats belonging to the same link isotopy class in $H_g$. The plat closure representation can be particularly suitable for computing knot invariants. 
\end{abstract}

\maketitle

\section*{Introduction} 
The representation of links as plat closures of braids with an even number of strands dates back to the works of Hilden \cite{hilden1975generators} and of Birman \cite{birman1976stable}, who in 1976 proved that any link in $\mathbb R^3$ or $S^3$, the 3-sphere,  could be represented as the plat closure of a braid in some braid group \(\mathcal{B}_{2n}\). 
As an example, recall that all rational knots/links (also known as 2-bridge knots/links) can be represented as the plat closure by a braid with 4 strands, leaving the last strand as the identity strand \cite{bankwitz1934viergeflechte}. Further, in \cite{hilden1975generators} Hilden described the plat isotopy of braids in the same braid group \(\mathcal{B}_{2n}\), by means of a subgroup \(K_{2n}\) of \(\mathcal{B}_{2n}\), the Hilden braid group. In \cite{birman1976stable} Birman extended the work of Hilden by the stabilization moves, to provide the equivalence relation for braids having isotopic plat closures. This braid equivalence is analogous to the classical Markov equivalence for braids with isotopic standard closures \cite{markov1935freie}. The Markov theorem together with the Alexander braiding theorem \cite{alexander1923lemma} furnish the algebraic theory for the isotopy equivalence of knots and links represented as standard closures of braids. 

Since then, a lot of work has been done using the plat representation. For instance, in \cite{bigelow2002homological} Bigelow studied the possibility of computing the Jones polynomial of a link, represented as plat closure of a braid, in terms of the action of that braid over a homological pairing defined on a covering of the configuration space of \(n\) points into the \(2n\)-punctured disc. This result was analyzed in recent years also from a computational point of view \cite{jordan2008estimating, Garnerone2007QuantumAB} and in order to compute other link invariants, such as the Conway potential function \cite{yun2011braid} or other polynomial invariants \cite{garnerone2006quantum}. 

The above led many researchers to generalize the results obtained in \(\mathbb{R}^3\) in the more general setting of c.c.o. 3-manifolds and handlebodies. For instance, using Heegaard surfaces, in \cite{doll1993generalization} Doll introduced the notion of $(g,b)$-decomposition or generalized bridge decomposition for links in a closed, connected and orientable (from now on c.c.o.) \(3\)-manifold, opening the way to the study of links in 3-manifolds via surface braid groups and their plat closures, see works of Bellingeri and Cattabriga \cite{bellingeri2012hilden} and Cattabriga and Gabrovšek  \cite{cattabriga2018markov}.

 In a parallel development, in  \cite{lambropoulou1997markov}, Lambropoulou and Rourke established the analogues of the Alexander and Markov theorems for knots and links in c.c.o. 3-manifolds, representing them first as mixed links in $S^3$ and then  as standard closures of mixed  braids. For the equivalence they used the concept of the $L$-move. In \cite{lambropoulou2000braid}  the mixed braid group  \(\mathcal{B}_{g,n}\) is introduced and studied, which is the subgroup of the Artin braid group   \(\mathcal{B}_{g+n}\) that  contains the identity braid $I_g$ on \(g\) strands as a fixed subbraid, enabling the  algebraic formulation of the Markov theorem in c.c.o. 3-manifolds \cite{lambropoulou2006algebraic}. 
Adapting the work in \cite{lambropoulou1997markov} to the setting of  a genus $g$ handlebody, $H_g$, in \cite{lambropoulou2000braid} H\"aring and Lambropoulou \cite{haring2002knot}, established the analogues of the Alexander and Markov theorems for knots and links in $H_g$, by representing them as standard closures of mixed braids, and utilizing the mixed braid groups and results from  \cite{lambropoulou2006algebraic}. see also \cite{sossinsky1992preparation}.

The goal of this article is to establish the counterpart plat closure theory for  handlebodies. We first show that it is always possible to represent a link in the handlebody $H_g$ of genus $g$ via the plat closure of a braid in $H_g$ (Theorem~\ref{teo_braiding_plat_hand}). Then we establish the Hilden braid group for $H_g$ as the Hilden braid subgroup, $K_{g,2n}$, of the mixed braid group \(\mathcal{B}_{g,2n}\), and we provide a set of generators for $K_{g,2n}$ (Theorem~\ref{theo:hilden_mixed}). We finally formulate and prove the algebraic equivalence connecting two plats in $H_g$ belonging to the same isotopy class (Theorem~\ref{teo:main_equivalence}). We are especially interested in this last part because it would be the starting point of the construction and computation of polynomial link invariants for links in the setting of handlebodies, and then extend further to the case of other 3-manifolds. With the present paper we fill in a gap in the literature, namely to study the plat closure for braids in a handlebody and formulate and prove their isotopy equivalence using the mixed Hilden braid group.  

We also studied the problem of links in handlebodies  in \cite{cavicchioli2023passing} from a different aspect: we proved that given the representation of a knot or link via the plat closure of a braid in \(\mathbb{R}^3\), in a handlebody or  in a thickened surface, it is possible to create algorithmically another braid representing the same link type but with the standard  closure, and vice versa. In this paper we establish that every knot or link in $H_g$ has a plat representative.

In Section \ref{Preliminaries} we  recall the notions of interest in the case of \(\mathbb{R}^3\). Namely, the Artin braid group \(\mathcal{B}_{m}\), the existence of standard and plat closure of braids, the braidings of knots and links to plats and to standardly closed braids, as well as the corresponding  equivalence relations reflecting isotopy. In particular, in Section \ref{sec:standard_Hilden} we recall the definition of the Hilden braid group used in the plat equivalence. 
 In Section \ref{Handlebody} we recall the representation of the handlebody $H_g$ in \(\mathbb{R}^3\) by the identity braid $I_g$ and of links and braids in $H_g$ by mixed links and mixed braids, and the definition and a presentation of the mixed braid group \(\mathcal{B}_{g, m}\).  
 In Section \ref{sec:plat_closure_mixed} we first recall the standard closure of mixed braids, the mixed braiding and the
mixed braid equivalence (analogues of the Alexander and Markov theorems for $H_g$).   We then define the plat closure of braids in $H_g$ and of mixed braids, and we state and prove the first part of the main result, the existence of a representation of links in handlebodies via plat closure of braids in Section \ref{Main_result}. 
 In Section \ref{Sec:Mixed_hilden_group} we define the Hilden braid group for $H_g$ and the mixed Hilden braid group, as the Hilden subgroup of the mixed braid group. Then, starting from a set of generators of the classical Hilden braid group, we obtain a set of generators for the mixed Hilden braid group, which is a crucial ingredient in proving the equivalence theorem. 
 Finally, in Section \ref{Equivalence} we state and prove the main theorem of the paper, the equivalence theorem for links in handlebodies represented via plat closed braids.

\smallbreak
This research was partially funded by the National Technical University of Athens and by the University of Bologna.

\section{Preliminaries: the case of $\mathbb{R}^3$} \label{Preliminaries}

In this section we first recall classical braids and their group structure. Then their standard closure,  the Alexander theorem for braiding any oriented link,  as well as the braid equivalence corresponding to classical link isotopy (the Markov theorem and its $L$-move analogue). We then proceed with recalling the plat closure for braids and the plat braiding. Finally, we recall the Hilden braid subgroup of the braid group and the plat closure braid equivalence corresponding to classical link isotopy, all by following the work of Birman on plats \cite{birman1976stable}. All of this will be our basic tools to prove the equivalence theorem in the setting of handlebodies. 

\subsection{The Artin braid group}

The Artin braid group \cite{artin1947theory}, \(\mathcal{B}_{n}\),  is defined as the fundamental group of the unordered configuration space of \(n\) points on the real plane, i.e. a group which elements are equivalence classes of \(n\) paths \(\Psi = (\psi_1, \dots, \psi_n), \psi_i : [0,1] \rightarrow \mathbb{R}^2\) going from a set of \(n\) points of the plane to themselves, and never intersecting (\(\psi_i(t) \neq \psi_j(t), \forall t \in [0,1], i,j\in {1, \dots, n}, i<j\)). 

The classical presentation of \(B_n\) is generated by elements \(\sigma_i, i\in {1, \dots, n-1}\), which exchanges strands \(i\) and \(i+1\) as in Fig.~\ref{fig:braid_sigma_i}, with relations: 
\begin{align*}
   \sigma_i \sigma_j & = \sigma_j \sigma_i, \qquad \text{if } |i-j|\geq 2\\
   \sigma_i \sigma_{i+1} \sigma_i & = \sigma_{i+1} \sigma_i \sigma_{i+1}. \\
\end{align*}
For further information concerning the braid group the reader can refer, for example, to \cite{birman2005braids, kassel2008braid}. 

\begin{figure}[H]
    \centering
    \includegraphics[width = .5\textwidth]{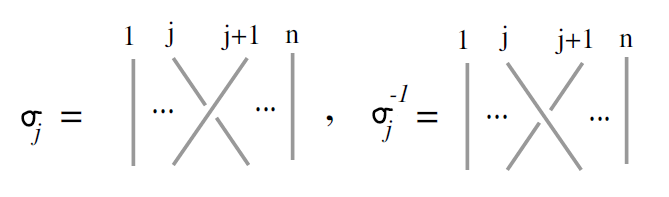}
    \caption{The \(\sigma_j\) and \(\sigma_j^{-1}\) generators.}
    \label{fig:braid_sigma_i}
\end{figure}

\subsection{The standard closure, the braiding and the braid equivalence }\label{Section:standard_braiding_equivalence}

In order for a braid to represent a link we have to `close' it, for example identifying the starting set of points on the plane with itself. This operation can be viewed as connecting the top endpoints to their corresponding bottom endpoints with simple unlinked arcs, like in the right part of Fig.~\ref{fig:standard_closure_plat}. This is called \emph{standard closure} of a braid. It is important to note that this closure naturally gives rise to an oriented link: indeed, if we consider the strands of the braid to be oriented downward, then all closing arcs will be oriented upward and the whole orientation is coherent. 

\begin{figure}[H]
    \centering
    \includegraphics[width = .85 \textwidth]{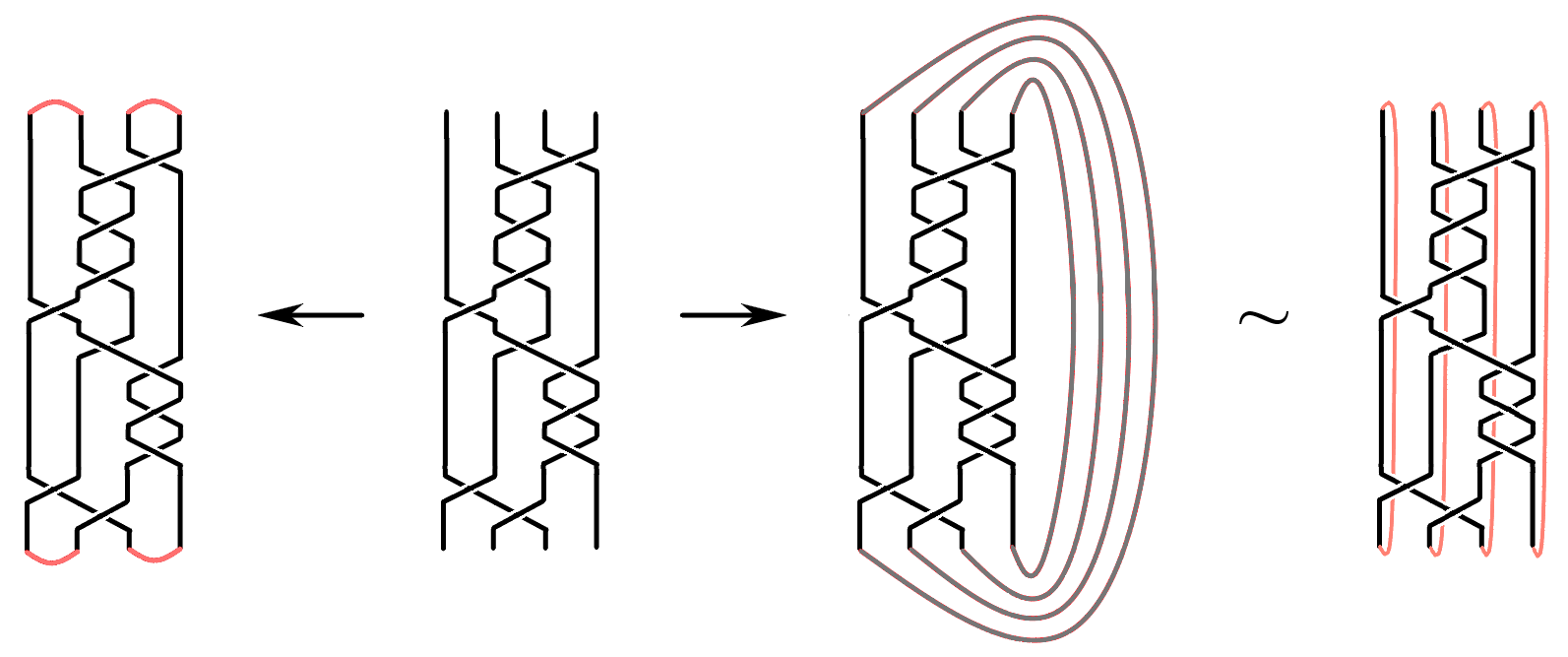}
    \caption{On the right, the standard closure of a braid and its isotopy to sliding the closing arcs to the back of the braid. On the left, the plat closure of the same braid.}
    \label{fig:standard_closure_plat}
\end{figure}

Conversely, by a result of H.~Brunn and of J.W.~Alexander \cite{alexander1923lemma, brunn1897verknotete}: 
\begin{theorem}[The Alexander theorem]
Every oriented link in \(\mathbb{R}^3\) can be represented as the standard closure of a braid in some \(\mathcal{B}_{n}\). 
\end{theorem}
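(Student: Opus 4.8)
The plan is to give a constructive braiding algorithm, following the classical threading argument of Brunn and Alexander \cite{brunn1897verknotete, alexander1923lemma}. First I would fix an oriented diagram $D$ of the link in the plane and choose a point $p$ in the plane, disjoint from $D$, to play the role of the \emph{braid axis} (thought of as an axis perpendicular to the plane). Using polar coordinates centered at $p$, I would call $D$ \emph{braided around $p$} if, as one traverses each component in the direction of its orientation, the angular coordinate is strictly monotonic, say strictly increasing. Once $D$ is in braided form, cutting the plane along a ray emanating from $p$ and unrolling the resulting annular region produces a braid $\beta \in \mathcal{B}_n$, where $n$ is the number of points in which $D$ meets the cutting ray; by construction the standard closure of $\beta$ is isotopic to the original link. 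This reduces the theorem to the claim that every diagram can be isotoped into braided form.

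To braid an arbitrary diagram, I would locate its \emph{bad arcs}, i.e.\ the maximal subarcs of $D$ along which the angular coordinate decreases rather than increases. After a preliminary small isotopy I may assume each bad arc is a short embedded arc, lying in a small disk that avoids $p$, meeting the rest of $D$ transversally and containing no crossings in its interior. The key move deletes one such bad arc and reconnects its endpoints by an arc that winds the \emph{long way} counterclockwise around $p$ (equivalently, pushing the short clockwise arc across $p$, or across the point at infinity on the far side), carrying the over/under information along so as to be consistent with the original crossings it now passes. Since this replacement is realized by an ambient isotopy of the link in $\mathbb{R}^3$, it preserves the link type, while converting the chosen bad arc into one or more good, counterclockwise arcs.

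Each application of the move strictly decreases the number of bad arcs, so after finitely many steps none remain and $D$ is braided around $p$; one then reads off the braid word in the generators $\sigma_i$ from the crossings met along the cutting ray. I expect the main obstacle to be the bookkeeping underlying this termination claim, namely verifying that a single braiding move lowers the count of bad arcs without introducing new ones elsewhere, and that the over/under data can always be respected when an arc is slid across $p$. Handling each bad arc individually inside its own small disk, and arguing that pushing it to a generic position keeps every crossing transverse while leaving the remainder of $D$ braided, is precisely where the care is required; no deeper topological input is needed beyond this control.
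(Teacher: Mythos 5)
The paper does not actually prove this statement: it is quoted as a known result of Brunn and Alexander \cite{alexander1923lemma, brunn1897verknotete}, so there is no in-paper proof to compare against, and your proposal must stand on its own. Your strategy is the correct classical one (braid axis, angular monotonicity, throwing bad arcs the long way around \(p\)), but as written it contains a genuine gap at the throwing step, precisely at the point you set aside as ``bookkeeping.''

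The move ``delete a bad arc and reconnect it the long way around \(p\)'' is realized as an isotopy by lifting the arc entirely above the diagram (or pushing it entirely below), swinging it around, and dropping it; hence the replacement arc must run entirely over (resp.\ entirely under) everything it meets, and the arc being thrown must itself be an over-strand (resp.\ under-strand) at every crossing it currently lies on, or carry no crossings at all. Your crossing-free normalization secures this for the \emph{first} throw, but the process destroys the property it relies on: each throw creates new crossings on every strand the replacement arc passes, in particular on the bad arcs still waiting to be thrown. After an over-throw those arcs acquire under-crossings; after an under-throw, over-crossings; and a bad arc carrying crossings of both kinds can be neither lifted nor pushed down, so your move is simply unavailable for it. Your termination invariant --- the number of bad arcs, which throws indeed do not increase --- is blind to this obstruction, because the obstruction lives in the crossing data sitting on the bad arcs, not in their count. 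Repairing this is the real content of the theorem. The classical options are: (i) Alexander's own three-dimensional argument, where one subdivides each bad edge and throws the pieces across the axis through triangles that miss the rest of the link (\(\Delta\)-moves obtained by general position), so that no over/under bookkeeping ever arises; or (ii) a disciplined diagrammatic version in which one re-subdivides before every throw so that each thrown arc carries at most one crossing and is thrown over or under according to the type of that crossing, together with a termination measure that repeated subdivision cannot inflate --- for instance the total angular extent of the bad set, which never increases because a throw leaves the plane projections of all other strands unchanged.
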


Different braids (also with a different number of strands) can be closed to give the same (oriented) link up to isotopy. A theorem by A.A.~Markov \cite{markov1935freie} (with an improvement by N.~Weinberg \cite{weinberg1939equivalence}, reducing the original 3 moves to 2) gives us a way of determining whether closing two braids results in isotopic links or not: 

\begin{theorem}[The Markov theorem]\label{Thm:two_markov}
Let \(\beta_n \in \mathcal{B}_n, \beta_m \in \mathcal{B}_m\) be two braids, and let \(L_n, L_m\) be the two links obtained by the standard closure of \(\beta_n, \beta_m\) respectively. Then \(L_n\) is equivalent to \(L_m\) if and only if \(\beta_n\) can be obtained from \(\beta_m\) after a finite sequence of these two moves: 
\begin{itemize}
    \item Conjugation: \(\beta \sim \gamma \beta \gamma^{-1}, \quad \beta, \gamma \in \mathcal{B}_n\); 
    \item Stabilization move: \(\beta \sim \beta \sigma_n^{\pm 1} \in \mathcal{B}_{n+1}, \quad \beta \in \mathcal{B}_{n}\). 
\end{itemize}
\end{theorem}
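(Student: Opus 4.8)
The plan is to separate the two implications. The \emph{only if} direction (soundness) I would dispatch directly by diagrammatic isotopy. For conjugation, when I close $\gamma\beta\gamma^{-1}$ I can slide the factor $\gamma^{-1}$ along the closing arcs around to the top of the braid box, where it meets and cancels $\gamma$; this uses only planar isotopy together with the freedom, illustrated in Fig.~\ref{fig:standard_closure_plat}, to push the closing arcs behind the braid. For stabilization, the lone crossing $\sigma_n^{\pm1}$ on the new $(n+1)$-st strand becomes, after closure, a small kink bounding a disc, which I remove by a single Reidemeister~I move; hence $\widehat{\beta\sigma_n^{\pm1}}$ and $\widehat{\beta}$ are isotopic. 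Neither step is the crux.

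The substance is the \emph{if} direction (completeness), which I would prove in two stages following the $L$-move philosophy of Lambropoulou and Rourke \cite{lambropoulou1997markov}. \emph{Stage 1.} First I would replace the two Markov moves by the single notion of an $L$-move: cutting a strand at an interior point and pulling the free arc entirely over the diagram (an $L_o$-move) or entirely under it (an $L_u$-move), up to the top and down to the bottom of the braid box, thereby creating one extra strand. I would then prove the $L$-move version of the statement, namely that $\widehat{\beta_n}$ and $\widehat{\beta_m}$ are isotopic if and only if $\beta_n$ and $\beta_m$ are related by braid isotopy and a finite sequence of $L$-moves. To do this I would put the ambient isotopy carrying one closure to the other in general position, so that it decomposes into finitely many elementary planar moves on the diagram. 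Keeping the diagram braided at each stage, the only steps that break monotonicity with respect to the braid axis are local, and I would show that each such step is repaired by exactly one $L$-move, which re-braids the offending arc by threading it over or under to the axis.

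\emph{Stage 2.} Next I would show that every $L$-move is, up to conjugation, a stabilization; combined with Stage~1 this yields completeness. An $L$-move performed at the last position, with the new strand not interacting with the others, is exactly $\beta\mapsto\beta\sigma_n^{\pm1}$, the sign recording whether the arc was pulled over or under. A general $L$-move I would bring to this standard position by combing the newly created pair of strands to the right-hand end of the braid box; the braiding incurred in this transport becomes, after closing, a conjugation. Hence $L$-move equivalence is contained in Markov equivalence, and together with the soundness already established the two relations coincide, which proves the theorem.

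The main obstacle is Stage~1: controlling the isotopy so that each non-braided excursion is absorbed by a \emph{single} $L$-move rather than by an uncontrolled rewriting of the braid word. This rests on a careful general-position argument together with an induction on the combinatorial complexity of the isotopy (roughly, the number of non-monotone arcs with respect to the axis), and it is where essentially all of the difficulty resides; Stage~2 is, by comparison, a routine diagrammatic reduction.
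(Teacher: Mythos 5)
The paper does not actually prove this statement: Theorem~\ref{Thm:two_markov} is quoted as classical background, cited to Markov \cite{markov1935freie} with Weinberg's reduction \cite{weinberg1939equivalence}, and the \(L\)-move reformulation you build on is likewise quoted, as Theorem~\ref{Thm:one_move_markov}, from \cite{lambropoulou1997markov}; the paper derives neither result from the other, only remarking that stabilization is a special \(L\)-move and that conjugation follows from \(L\)-moves. So your proposal can only be measured against the cited literature, and there its architecture is exactly the Lambropoulou--Rourke route: your Stage~1 \emph{is} Theorem~\ref{Thm:one_move_markov}, and your Stage~2 is the standard reduction in which the new strand of a general \(L\)-move, lying entirely over or entirely under the diagram, is combed to the right-hand end of the braid box, the coherent bands of crossings created at top and bottom cancel after closure, and what remains is a conjugate of \(\beta\sigma_n^{\pm1}\). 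Your soundness direction and Stage~2 are correct and, as you say, routine.

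The genuine gap is Stage~1, which you rightly identify as carrying all the weight but then leave as an assertion rather than an argument. ``Put the ambient isotopy in general position so that it decomposes into elementary moves, and repair each non-monotone step by exactly one \(L\)-move'' presupposes what must be proved: nothing guarantees that the intermediate diagrams stay braided, and the claim of one \(L\)-move per elementary step is false in general, since a single Reidemeister move performed on the closure can change the braided form by several \(L\)-moves in a way that depends on how the diagram is re-braided. What the proof in \cite{lambropoulou1997markov} actually supplies, and what your plan omits, is threefold: (a) a braiding algorithm converting any link diagram into a braid by eliminating up-arcs, one \(L\)-move per up-arc; (b) a proof that the resulting braid is independent, up to \(L\)-equivalence, of all choices made (order of elimination, subdivision, position of the cut points); and (c) a proof that planar isotopy and each Reidemeister move of diagrams alters the braiding only within its \(L\)-equivalence class. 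Items (b) and (c) are the real content of the theorem, and no induction ``on the number of non-monotone arcs'' of an ambient isotopy substitutes for them; as written, your proposal is a plan for a proof rather than a proof.
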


\noindent Fig.~\ref{fig:markov_R3} illustrates abstractly the 2 moves of the theorem. In the figure  a red closing arc is indicated, showing the isotopy between the (standard) closures of the middle and right-hand braid. 
\begin{figure}[H]
    \centering
    \includegraphics[width = .6\textwidth]{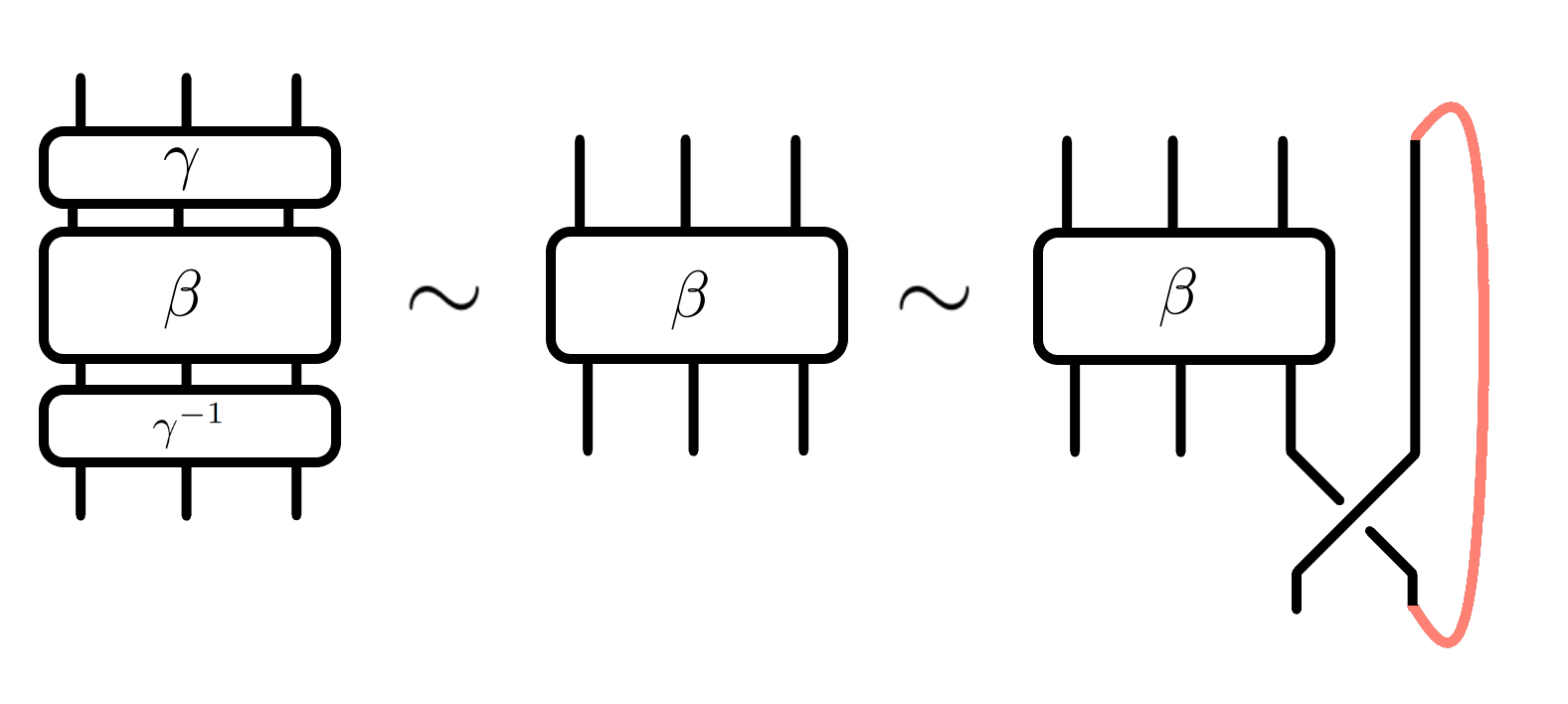}
    \caption{The conjugation (on the left) and the stabilization move (on the right).}
    \label{fig:markov_R3}
\end{figure}

Some years ago Lambropoulou and Rourke proved in \cite{lambropoulou1997markov} another version of the Markov theorem, with only one equivalence move: the {\it \(L\)-move}. This is defined as follows (see abstract illustration in Fig.~\ref{fig:L_move}): take a braid \(\beta \in \mathcal{B}_n\) and a point \(P\) in one of its strands and not aligned to any of the crossings or endpoints of \(\beta\); cut the arc at \(P\), bend the two resulting  arcs apart by a small isotopy and introduce two new vertical arcs (slightly tilted) to two new
top and bottom endpoints in the  vertical line of \(P\). These two new arcs are both oriented downward and they run either both \emph{under} all other arcs of the diagram (first instance in Fig.~\ref{fig:L_move}) or both \emph{over} (third instance in Fig.~\ref{fig:L_move}) all other arcs of the diagram, thus performing an \emph{under \(L\)-move} (or \emph{\(L_u\)-move}) and a \emph{over \(L\)-move} (or \emph{ \(L_o\)-move}) respectively. The right-hand illustration of the figure indicates an equivalent version of an \(L_o\)-move, where an in-box crossing is introduced, using braid isotopy. This last form allows to have algebraic expressions for the \(L\)-moves (see \cite{lambropoulou1997markov}). In the figure we have also indicated red closing arcs for the \(L\)-moves, showing that the (standard) closures of all braids are isotopic.

\begin{figure}[H]
    \centering
    \includegraphics[width = \textwidth]{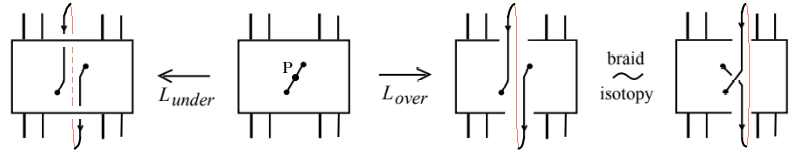}
    \caption{Performing an \(L\)-move. The two results are the \emph{under} and \emph{over} \(L\)-moves. The right-hand illustration shows the version of an \(L\)-move with an in-box crossing. }
    \label{fig:L_move}
\end{figure}

The new statement is the following: 
\begin{theorem}[\(L\)-move braid equivalence]\label{Thm:one_move_markov}
Let \(\beta_n \in \mathcal{B}_n, \beta_m \in \mathcal{B}_m\) be two braids, and let \(L_n, L_m\) be the two links obtained by the standard closure of \(\beta_n, \beta_m\) respectively. Then \(L_n\) is equivalent to \(L_m\) if and only if \(\beta_n\) can be obtained from \(\beta_m\) after a finite sequence of \(L\)-moves and braid isotopies. 
\end{theorem}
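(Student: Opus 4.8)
The plan is to establish both implications, leaning on the already-proved Markov theorem (Theorem~\ref{Thm:two_markov}) so as to avoid re-running the full geometric braiding argument. For the \emph{if} direction, braid isotopy manifestly preserves the isotopy type of the standard closure, so it is enough to check that a single $L$-move does. This is clear geometrically: after closing, the two parallel new strands together with the closure arc joining their new endpoints form a finger that has simply been pulled out of the diagram; since this finger runs entirely over (for an $L_o$-move) or entirely under (for an $L_u$-move) every other arc, nothing obstructs retracting it back onto the original arc through $P$. The red closing arcs in Fig.~\ref{fig:L_move} display exactly this isotopy in both cases.

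For the \emph{only if} direction, I would prove that the equivalence relation generated by $L$-moves together with braid isotopy coincides with the Markov equivalence relation generated by conjugation and stabilization; the claim then follows at once from Theorem~\ref{Thm:two_markov}. The first inclusion, that each Markov move is realized by $L$-moves, is the easy half. A stabilization $\beta \mapsto \beta\sigma_n^{\pm 1}$ is literally a special $L$-move: taking the point $P$ on the last strand just above its bottom endpoint and performing the in-box-crossing version of the move (the right-hand instance of Fig.~\ref{fig:L_move}) produces precisely $\beta\sigma_n^{\pm 1} \in \mathcal{B}_{n+1}$. A conjugation $\beta \mapsto \gamma\beta\gamma^{-1}$ is recovered by combining an $L$-move at the bottom of the braid with its inverse $L$-move at the top along the same created strand, so that a block of crossings is transported cyclically from the bottom of the braid to the top; at the braid level this is exactly conjugation by $\gamma$.

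The reverse inclusion, that every $L$-move is generated by conjugations and stabilizations, is the step I expect to be the main obstacle, and it is where the genuine content lies. Here I would drag the two new endpoints created by the $L$-move, using braid isotopy, all the way to the rightmost position, so that the new arc becomes the $(n+1)$-st strand. As the over- (resp.\ under-) strand is pushed across the strands it was originally routed over (resp.\ under), and as the top and bottom endpoint sets are kept matched by a re-indexing, these crossings accumulate into a conjugating factor $\gamma$, while the lone remaining kink at the new endpoints becomes a single $\sigma_n^{\pm 1}$; thus an arbitrary $L$-move equals a conjugation followed by one stabilization. The delicate point is the bookkeeping of the crossings created during this sliding and of the forced re-indexing, which must be organized so that every case collapses to this normal form. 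An alternative, self-contained route, bypassing Theorem~\ref{Thm:two_markov} altogether, is the original geometric argument: realize the link isotopy as a finite sequence of elementary (triangle) moves and verify, via a braiding that respects the fixed braid axis, that each such move alters the braided diagram only by $L$-moves and braid isotopy; the main obstacle there is instead the lemma that any two braidings of a given diagram are $L$-equivalent.
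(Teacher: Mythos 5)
The paper itself gives no proof of this statement: it is recalled as a known result from \cite{lambropoulou1997markov}, where the original argument is self-contained and geometric --- one braids the link isotopy directly and checks that each elementary isotopy move alters a braiding only by $L$-moves and braid isotopy --- and in particular does not presuppose the Markov theorem (indeed it reproves it). Your route, deducing the statement from Theorem~\ref{Thm:two_markov} by comparing the two equivalence relations, is therefore a genuinely different one, and it is admissible within the logical structure of this paper, since Theorem~\ref{Thm:two_markov} is stated beforehand. Your ``if'' direction and your observation that stabilization is literally a special $L$-move are both correct (the paper makes the latter remark explicitly).

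However, your assessment of where the difficulty sits is inverted, and the step carrying the real content is left as an unconvincing sketch. For the ``only if'' direction you need exactly one inclusion: every Markov move is realizable by $L$-moves and braid isotopy. The reverse inclusion, which you call ``the main obstacle'' and ``where the genuine content lies,'' is never needed for the theorem --- and in any case it is free: two braids differing by an $L$-move have isotopic closures (your own ``if'' argument), hence are Markov equivalent by the ``only if'' part of Theorem~\ref{Thm:two_markov}; no bookkeeping of slid crossings is required. By contrast, the step you dismiss as ``the easy half,'' realizing conjugation $\beta \mapsto \gamma\beta\gamma^{-1}$ by $L$-moves, is the crux of your whole approach, and your one-sentence construction does not work as stated: a strand created by an $L$-move spans the full height of the braid, and removing ``the same created strand'' by the inverse move simply undoes the move. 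Transporting a block $\gamma$ around the closure forces \emph{every} strand of $\gamma$ to pass around the braid axis, so each such strand requires its own $L$-move passage (it suffices to treat $\gamma$ a single generator $\sigma_i^{\pm1}$, but even then both strands of the crossing must be carried over or under the whole braid and reassembled at the top); this is a genuine lemma needing a figure or an algebraic computation. Note also that you cannot import this implication from the present paper: the remark following Theorem~\ref{Thm:one_move_markov} obtains conjugation-from-$L$-moves as a \emph{consequence} of that theorem, so invoking it here would be circular. Until the conjugation step is proved directly, your argument is incomplete.
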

Note that the stabilization move of Theorem~\ref{Thm:two_markov} is a special case of an \(L\)-move. Further, from Theorem~\ref{Thm:one_move_markov} it follows that the braid conjugation of Theorem~\ref{Thm:two_markov} can be achieved by the \(L\)-moves. 

\subsection{The plat closure and representation of links as plat closures  of braids}

Let  \(B\) be a braid with an even number of strands, i.e.  \(B \in \mathcal{B}_{2n}\) some \(n\). By \cite{birman1976stable} one can close the open ends by connecting adjacent top or bottom endpoints using simple arcs (see left part of Fig.~\ref{fig:standard_closure_plat}). This procedure is called the \textit{plat closure} of the braid and the resulting link {\it plat}, denoted as  \(\overline{B}\). 

We can extend the notion of plat closure also to braids with an odd number of strands as follows: let \(\beta \in \mathcal{B}_n\), we consider \({B}'\) to be the image of \(\beta\) under the natural embedding of \(\mathcal{B}_n\) into \(\mathcal{B}_{n+1}\), and we define the plat closure of \(\beta\) as the plat closure of \({B}'\) with the procedure described above. In practice, we add an extra strand in the far right of the braid \(\beta\). 

It is important to note that, closing in the plat way does not keep the natural orientation of the braid. In fact, when we close the braid using those short arcs, we connect two downward strands. 

In \cite{birman1976stable}(Proposition 1) Birman proved that it is always possible to describe any link as the plat closure of a braid in \(\mathbb{R}^3\). This is the plat  analogue of the classical Alexander theorem that uses the standard closure of braids. The idea of Birman's algorithm is simple: in a projection plane equipped with top to bottom direction, consider a knot or link diagram with no horizontal arcs. Then pull by an isotopy every local maximum to the top and every local minimum to the bottom, as abstracted in Fig.~\ref{fig:braiding}. The result will be a plat. 

\begin{figure}[H]
    \centering
    \includegraphics[width = .7\textwidth]{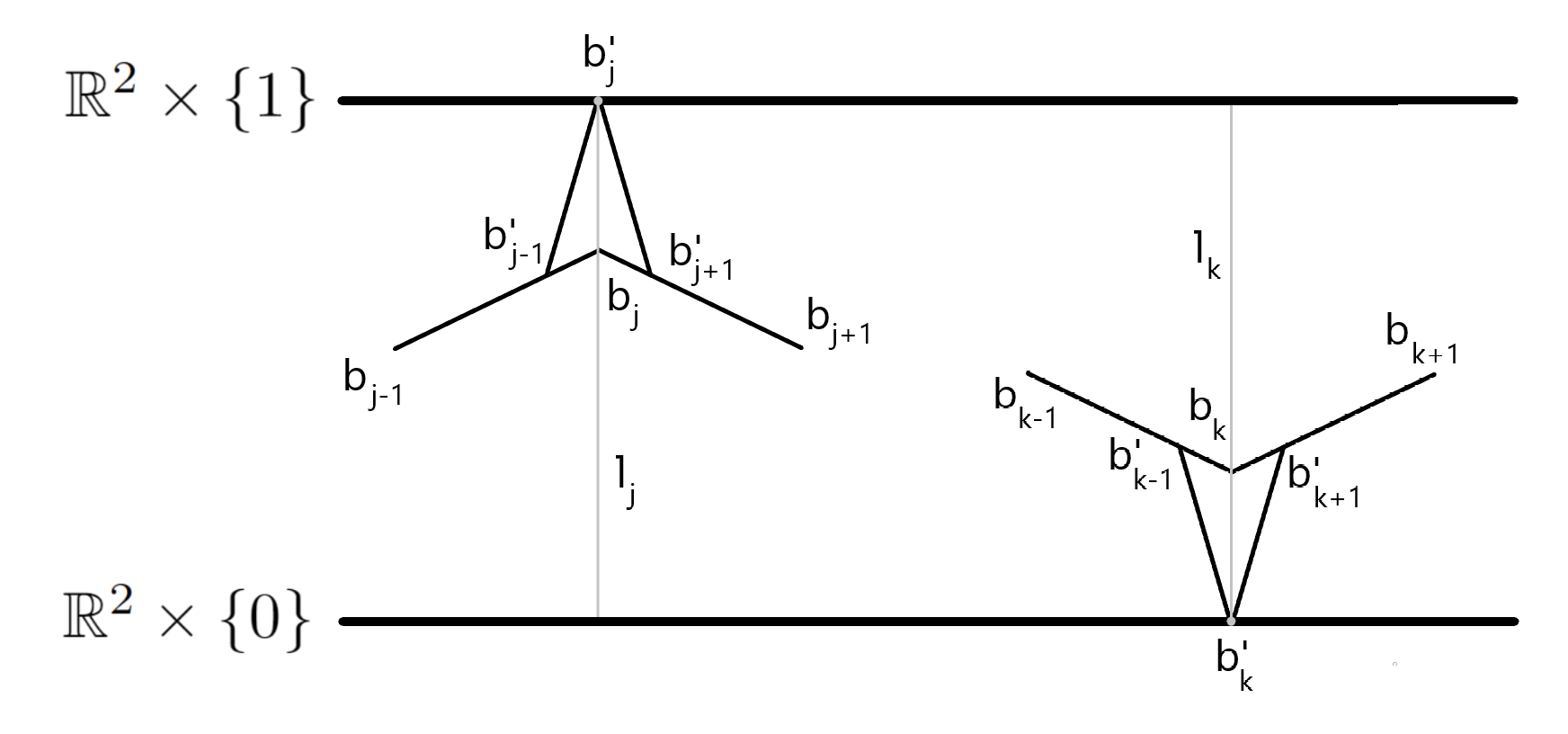}
    \caption{The braiding process. }
    \label{fig:braiding}
\end{figure}

In \cite{cavicchioli2023passing}  we remarked that, if one closes a braid via the classical standard closure and push by isotopy the closing arcs right next to the endpoints, then the resulting closed braid is a plat. See rightmost part of Fig.~\ref{fig:standard_closure_plat}. This observation  provides an alternative proof of the Birman result  (see \cite[Theorem 3.2]{cavicchioli2023passing}).

\subsection{The Hilden braid group}\label{sec:standard_Hilden}

Now we want to recall a subgroup of the braid group \(\mathcal{B}_{2n}\)  which will be useful for the construction of the equivalence between links represented by braids closed via plat closure.  Following the construction of \cite{hilden1975generators}, consider a braid as an embedding in \(\mathbb{R}^3\), such that the plat closure arcs of the bottom part of the braid lie at level \(\pi : \{z = 0\}\). This plane \(\pi\) cuts \(\mathbb{R}^3\) into two halves; let the upper half be \(\mathbb{R}^3_+\) and let the set of endpoints of the closure arcs in \(\pi\) be \(A = (a_{11}, a_{12}, \dots, a_{m1}, a_{m2})\), as in Fig.~\ref{fig:plat_closure_plane}. 

\begin{figure}[H]
    \centering
    \includegraphics[width = .55\textwidth]{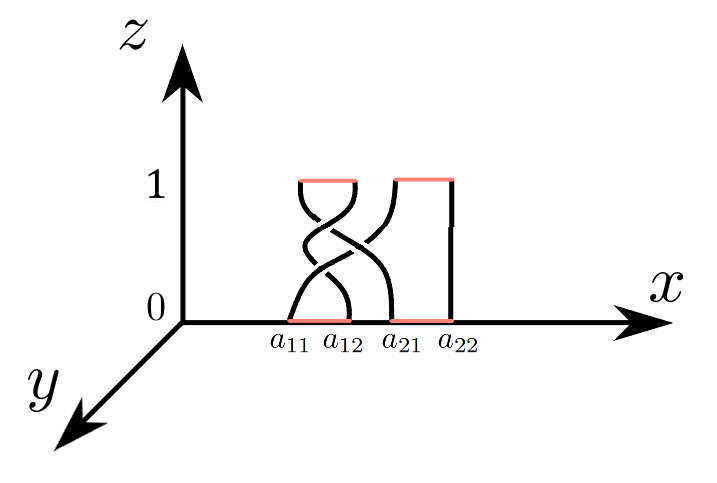}
    \caption{The braid embedded in \(\mathbb{R}^3\) such that the closure arcs lie on the plane \(\pi : z = 0\). }
    \label{fig:plat_closure_plane}
\end{figure}

Then the group of isotopy classes \(\Psi = [\psi]\) of orientation-preserving homeomorphisms \(\psi : (\pi, A) \rightarrow (\pi, A)\) is the classical braid group \(\mathcal{B}_{2n}\). 

\begin{definition}\label{def:Hilden_group}
The {\it Hilden braid group} \cite{hilden1975generators}, \(K_{2m}\), is defined as the subgroup of \(\mathcal{B}_{2m}\) generated by the equivalence classes of homeomorphisms of \(\mathbb{R}^3_+\) leaving the set of closure arcs invariant on its boundary.     
\end{definition}

In \cite{hilden1975generators} Hilden proves that \(K_{2m}\) is finitely generated and gives a set of generators in terms of elements of \(\mathcal{B}_{2m}\). Moreover, he points at the following reduced set of generators, which Birman \cite{birman1976stable}  proves is minimal. 

\begin{proposition}\label{prop:hilden_generators}
The Hilden braid group \(K_{2m}\) is generated by the set 
\[\{\sigma_{2i-1}, 1\leq i\leq m; \  \mu_i = \sigma_{2i} \sigma_{2i-1} \sigma_{2i+1} \sigma_{2i}, 1\leq i \leq m-1; \ \rho_{ij}, 1\leq i,j \leq m; \ \omega_{ij}, 1 \leq i,j \leq m\},\] 
with: 
\[\rho_{ij} = \left \{ \begin{array}{ll}
\vspace{4pt}
(\sigma_{2j} \sigma_{2j-1}) (\sigma_{2j+1} \sigma_{2j}) \dots (\sigma_{2i-2} \sigma_{2i-3}) \sigma_{2i-2} (\sigma_{2i-3} \sigma_{2i-2}) \sigma_{2i-3}^{-1} (\sigma_{2i-4}^{-1} \sigma_{2i-3}^{-1}) \dots (\sigma_{2j-1}^{-1} \sigma_{2j}^{-1}), \\
\vspace{8pt}
\text{ if } \ j<i \text{,}\\  
\vspace{4pt}
(\sigma_{2j-2}^{-1} \sigma_{2j-1}^{-1}) (\sigma_{2j-3}^{-1} \sigma_{2j-2}^{-1}) \dots (\sigma_{2i}^{-1} \sigma_{2i+1}^{-1}) \sigma_{2i}^{-1} (\sigma_{2i+1}^{-1} \sigma_{2i}^{-1}) \sigma_{2i+1}^{-1} (\sigma_{2i+2} \sigma_{2i+1}) \dots (\sigma_{2j-1} \sigma_{2j-2}), \\
\text{ if } \ i<j.
\end{array}
\right.\]

\[\omega_{ij} = \left \{ \begin{array}{ll}
\vspace{5pt}
(\sigma_{2j} \sigma_{2j-1}) (\sigma_{2j+1} \sigma_{2j}) \dots (\sigma_{2i-2} \sigma_{2i-3}) (\sigma_{2i-1} \sigma_{2i-2}) \sigma_{2i-1} (\sigma_{2i-2} \sigma_{2i-1}) (\sigma_{2i-3} \sigma_{2i-2}) \sigma_{2i-3}^{-1}  \\
\vspace{10pt}
(\sigma_{2i-4}^{-1} \sigma_{2i-3}^{-1}) \dots (\sigma_{2j-1}^{-1} \sigma_{2j}^{-1}) , \text{ if } \ j<i \text{,}\\
\vspace{5pt}
(\sigma_{2j-2}^{-1} \sigma_{2j-1}^{-1}) (\sigma_{2j-3}^{-1} \sigma_{2j-2}^{-1}) \dots (\sigma_{2i}^{-1} \sigma_{2i-1}^{-1}) (\sigma_{2i+1}^{-1} \sigma_{2i}^{-1}) \sigma_{2i-1} (\sigma_{2i}^{-1} \sigma_{2i+1}^{-1}) (\sigma_{2i-1}^{-1} \sigma_{2i}^{-1}) \sigma_{2i+1}^{-1} \\
(\sigma_{2i+2} \sigma_{2i+1}) \dots (\sigma_{2j-1} \sigma_{2j-2}), \text{ if } \ i<j.\end{array}
\right.\]

\noindent Furthermore, a minimal set of generators for \(K_{2m}\)  is the following: 
\[\{\sigma_1; \  \lambda_1 = \rho_{1,2} = \sigma_2 \sigma_1^2 \sigma_2; \   \mu_i = \sigma_{2i} \sigma_{2i-1} \sigma_{2i+1} \sigma_{2i}, 1\leq i \leq m-1\}.\] 
\end{proposition}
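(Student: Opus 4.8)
The plan is to work directly from the geometric description in Definition~\ref{def:Hilden_group}: an element of $K_{2m}$ is the isotopy class of a homeomorphism of $\mathbb{R}^3_+$ carrying the system of closure arcs $\alpha_1,\dots,\alpha_m$ to itself setwise, where $\alpha_i$ joins the endpoints $a_{i1},a_{i2}$, and the embedding $K_{2m}\hookrightarrow\mathcal{B}_{2m}$ is induced by restriction to the boundary plane $\pi$. Thus the whole question reduces to identifying which boundary braids extend over the upper half-space while preserving $\bigcup_i\alpha_i$, and I would first verify that each of the four listed families does so. The half-twist $\sigma_{2i-1}$ exchanges the two feet $a_{i1},a_{i2}$ of a single arc and extends by rotating a half-ball neighborhood of $\alpha_i$; the element $\mu_i=\sigma_{2i}\sigma_{2i-1}\sigma_{2i+1}\sigma_{2i}$ braids the two adjacent arcs $\alpha_i,\alpha_{i+1}$ past one another; and the elements $\rho_{ij},\omega_{ij}$ are the slide moves dragging the $i$-th arc around the $j$-th and back. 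For each, one reads the explicit braid word off the picture and checks that it fixes the arc system, so all four families indeed lie in $K_{2m}$.

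To prove that these families generate $K_{2m}$, I would follow Hilden's normal-form argument. Given an arbitrary arc-preserving homeomorphism $h$, one measures its complexity by the geometric intersection number of the image arc system $h(\bigcup_i\alpha_i)$ with a fixed reference system of vertical arcs, and shows that whenever $h$ is not already trivial one can post-compose it with one of the elementary moves above so as to strictly decrease this complexity. Inducting downward then writes $h$ as a product of the four families. This step is the crux and the main obstacle, since one must convert the geometric reduction into a clean combinatorial count and, in parallel, confirm that the long explicit words for $\rho_{ij}$ and $\omega_{ij}$ faithfully encode the intended slide moves; this is precisely where the careful bookkeeping of Hilden and Birman does the work.

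It then remains to pass from this generating set to the minimal one $\{\sigma_1;\ \lambda_1=\rho_{1,2};\ \mu_i,\,1\le i\le m-1\}$. The key observation is that the subgroup generated by $\mu_1,\dots,\mu_{m-1}$ re-indexes the arcs, so conjugating a half-twist or a slide on a given pair of arcs by a suitable word in the $\mu_i$ transports it to any other pair. Hence conjugates of $\sigma_1$ by words in the $\mu_i$ recover every $\sigma_{2i-1}$, and conjugates of $\lambda_1=\rho_{1,2}$ recover every $\rho_{ij}$ and every $\omega_{ij}$ (after absorbing the finitely many half-twists $\sigma_{2i-1}$, themselves already obtained). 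Carrying out these conjugations explicitly and matching them against the given formulas reduces the full generating set to the stated elements $\sigma_1$, $\lambda_1$, and the $\mu_i$.

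Finally, for minimality I would argue as Birman does by passing to a convenient abelian quotient of $K_{2m}$: one exhibits a homomorphism onto an abelian group under which the images of $\sigma_1$, $\lambda_1$ and $\mu_1,\dots,\mu_{m-1}$ are independent. This forces the minimal number of generators of $K_{2m}$ to be at least $m+1$, matching the cardinality of the displayed set, so that the latter is indeed a minimal generating set.
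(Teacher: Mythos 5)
Two preliminary remarks. First, the paper does not actually prove Proposition~\ref{prop:hilden_generators}: it is imported as a known result, the generating set from Hilden \cite{hilden1975generators} and the reduction to the small set, with its minimality, from Birman \cite{birman1976stable}. So your attempt can only be measured against those original arguments, not against anything in this paper. Second, your outline does reproduce the overall architecture of those arguments: verify that each listed word extends to an arc-preserving homeomorphism of $\mathbb{R}^3_+$; prove generation by a descending induction on a geometric complexity; then use the $\mu_i$, which permute the closure arcs, to conjugate $\sigma_1$ and $\lambda_1=\rho_{1,2}$ into the remaining $\sigma_{2i-1}$, $\rho_{ij}$, $\omega_{ij}$. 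However, the generation step, which is where essentially all the content of the proposition lies, is only named ("Hilden's normal-form argument") and then explicitly deferred to "the careful bookkeeping of Hilden and Birman". As a citation that would be fine (it is what the paper itself does), but as a proof the attempt is incomplete exactly at its crux.

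There is also a concrete step that fails: the proposed proof of minimality via an abelian quotient cannot work. For $m\ge 3$ the elements $\mu_1,\dots,\mu_{m-1}$ are pairwise conjugate inside $K_{2m}$: each $\mu_i=\sigma_{2i}\sigma_{2i-1}\sigma_{2i+1}\sigma_{2i}$ is the image of $\sigma_i$ under the strand-doubling (2-cabling) homomorphism $\mathcal{B}_m\to\mathcal{B}_{2m}$, so the braid relations $\mu_i\mu_{i+1}\mu_i=\mu_{i+1}\mu_i\mu_{i+1}$ hold, giving $(\mu_i\mu_{i+1})\,\mu_i\,(\mu_i\mu_{i+1})^{-1}=\mu_{i+1}$, and the conjugating element $\mu_i\mu_{i+1}$ lies in $K_{2m}$. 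Hence under \emph{every} homomorphism from $K_{2m}$ to an abelian group the images of all the $\mu_i$ coincide, so the images of $\sigma_1,\lambda_1,\mu_1,\dots,\mu_{m-1}$ can never be independent once $m\ge 3$; no abelian quotient can certify that $m+1$ generators are needed. Any correct minimality argument must use non-abelian invariants. For instance, to see that no $\mu_j$ can be omitted, use the homomorphism $K_{2m}\to S_m$ recording the induced permutation of the $m$ closure arcs: it sends $\mu_i$ to the transposition exchanging arcs $i$ and $i+1$, while $\sigma_1$ and $\lambda_1$ (being arc-preserving) map to the identity, so deleting $\mu_j$ leaves the image inside the proper subgroup $S_j\times S_{m-j}$. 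To see that $\sigma_1$ cannot be omitted, use the exponent-sum homomorphism $\mathcal{B}_{2m}\to\mathbb{Z}$, under which $\lambda_1$ and all $\mu_i$ map to $4$ while $\sigma_1$ maps to $1$. The element $\lambda_1$ requires yet another invariant. This is the part of your proposal that would have to be rebuilt from scratch.
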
 

A visual representation of \(\rho_{ij}\) and \(\omega_{ij}\) generators is depicted in Figure~\ref{fig:gen_hilden_rho_omega}. 

\begin{figure}[H]
    \centering
    \includegraphics[width = 1 \textwidth]{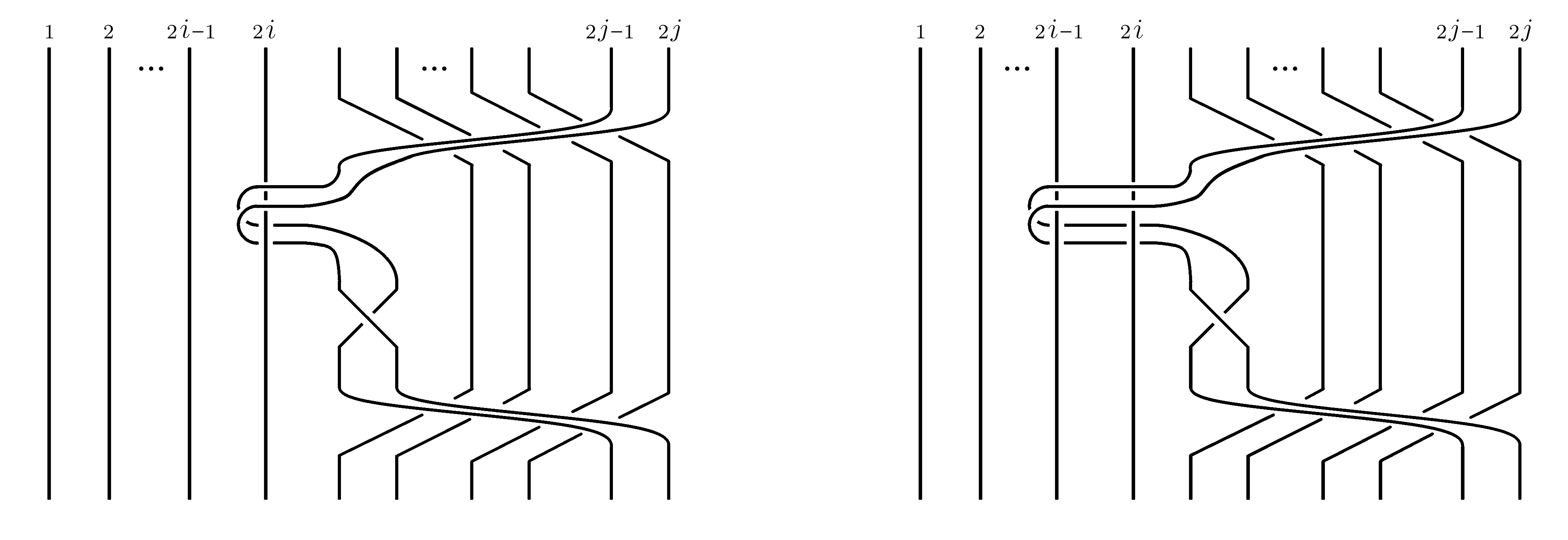}
    \caption{The \(\rho_{ij}\) and \(\omega_{ij}\) generators in the case of \(i<j \)}
    \label{fig:gen_hilden_rho_omega}
\end{figure}

A visual representation of the  minimal set of generators  is depicted in Figure~\ref{fig:gen_hilden}. 

\begin{figure}[H]
    \centering
    \includegraphics[width = .55\textwidth]{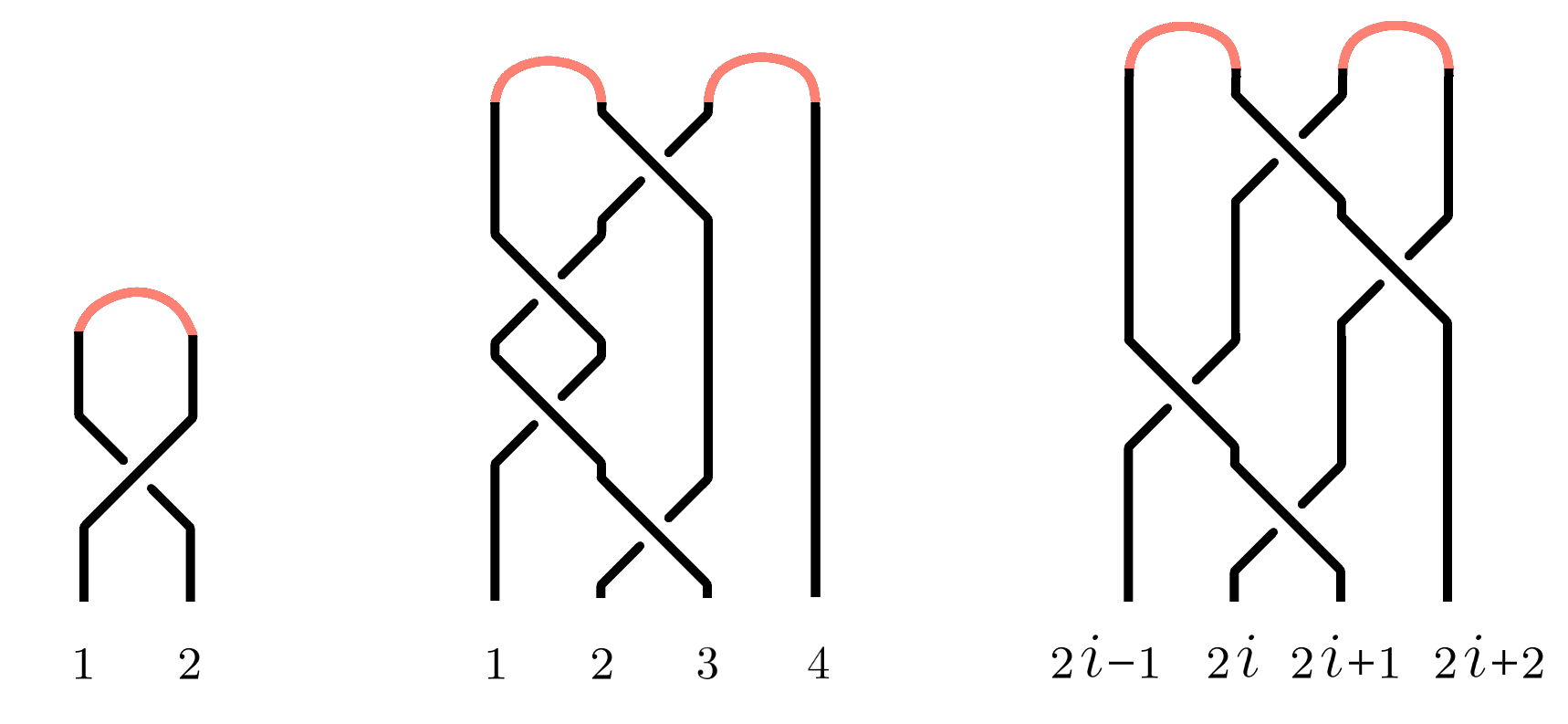}
    \caption{The three types of the  minimal set of generators for \(K_{2m}: \sigma_1, \lambda_1, \mu_i\). }
    \label{fig:gen_hilden}
\end{figure}

\subsection{Plat equivalence}

In \cite{birman1976stable} Birman proves that every link can be represented as the plat closure of a braid in some \(\mathcal{B}_{2n}\). She also proves a result concerning equivalence between plat closures of braids: 

\begin{theorem}\cite{birman1976stable} \label{Thm:birman_original}
Let \(L_1, L_2\) be two links, and let \(B_1 \in \mathcal{B}_{2n_1}, B_2 \in \mathcal{B}_{2n_2}\) be two braids such that their plat closures define the same link types as \(L_1, L_2\) respectively. Then \(L_1\) is equivalent to \(L_2\) if and only if there exists an integer \(t \geq \max(n_1, n_2)\) such that, for each \(m \geq t\) the elements: 
\[B_i' = B_i \sigma_{2n_i} \sigma_{2n_i+2} \dots \sigma_{2m-2} \in \mathcal{B}_{2m}, \qquad i = 1, 2\]
are in the same double coset of \(\mathcal{B}_{2m}\) modulo the Hilden subgroup \(K_{2m}\). 
\end{theorem}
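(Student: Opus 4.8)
The plan is to prove the two implications separately, working in the geometric model in which \(\mathbb{R}^3\) is cut by two horizontal planes into a lower half-space, a middle slab, and an upper half-space: the braid \(B\) sits in the slab, the bottom plat-closure arcs (cups) lie in the lower half-space and form the standard trivial system \(\{(1,2),(3,4),\dots,(2n-1,2n)\}\), and the top arcs (caps) lie in the upper half-space and pair the points in the same way, so that the plat \(\overline{B}\) is the union of these three pieces. The key geometric fact, which I would isolate first, is the one recalled in Definition~\ref{def:Hilden_group}: an element of \(K_{2m}\) is exactly a mapping class of the punctured plane that extends to a homeomorphism of a closed half-space carrying the trivial arc system to itself. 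Consequently, multiplying \(B\) on the bottom by \(h\in K_{2m}\) amounts to pushing the lower arc system around by an ambient isotopy, and multiplying on the top (read through the flip that reverses the slab) does the same for the upper arc system; neither changes \(\overline{B}\) up to isotopy.

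The \emph{if} direction then follows in two moves. First, I would show that the algebraic stabilization \(B_i \mapsto B_i\,\sigma_{2n_i}\sigma_{2n_i+2}\cdots\sigma_{2m-2}\) is exactly the \emph{plat stabilization}: after embedding \(B_i\) into \(\mathcal{B}_{2m}\) by trivial strands, each factor \(\sigma_{2k}\) links a newly capped pair to the preceding block, producing under plat closure a trivial bridge that cancels by an isotopy; hence \(\overline{B_i'}\cong\overline{B_i}\cong L_i\). Second, by the geometric fact above, if \(B_1' = h_1\,B_2'\,h_2\) with \(h_1,h_2\in K_{2m}\) then \(\overline{B_1'}\cong\overline{B_2'}\), and chaining the isotopies gives \(L_1\cong L_2\).

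The substance is the \emph{only if} direction. Assuming \(L_1\cong L_2\), I would put the ambient isotopy in general position with respect to the height function and decompose it into finitely many elementary moves of two kinds: \emph{horizontal} moves that preserve the number of maxima and minima, and \emph{birth/death} moves that create or cancel a single maximum--minimum pair. For a horizontal move one checks that the induced change of braiding is realized by pre- and post-composition with homeomorphisms of the two half-spaces fixing the respective arc systems, i.e. by left and right multiplication by elements of \(K_{2m}\), so the double coset \(K_{2m}\,\gamma\,K_{2m}\) is unchanged. A birth move, after an isotopy bringing the new pair to the far right, is precisely one factor \(\sigma_{2k}\) of the stabilization, and a death move is its inverse; matching the two plats therefore requires stabilizing both braids up to a common strand number \(m\) and relating them inside \(K_{2m}\backslash\mathcal{B}_{2m}/K_{2m}\). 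The uniform threshold \(t\) is obtained by observing that a single further plat stabilization on both sides preserves membership in a common double coset, so once enough stabilizations have been performed the relation persists for every \(m\ge t\).

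I expect the main obstacle to be the bookkeeping in the horizontal moves: one must verify that every isotopy keeping the critical points fixed really is generated by the Hilden half-space homeomorphisms, with no residual braiding escaping the double coset, and that the top and bottom contributions are both captured by the \emph{same} subgroup \(K_{2m}\) --- this uses that the generating set \(\{\sigma_{2i-1},\mu_i,\rho_{ij},\omega_{ij}\}\) of Proposition~\ref{prop:hilden_generators} is invariant under the slab-reversing flip. A cleaner route to the same conclusion, which I would keep in reserve, is to pass to the bridge (Heegaard) splittings determined by the two plats and invoke a Reidemeister--Singer type theorem: any two such splittings of isotopic links become isotopic after a bounded number of stabilizations, and translating the stabilized-splitting equivalence back into mapping classes yields exactly the stable double-coset statement.
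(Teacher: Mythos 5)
First, a point of comparison: the paper itself gives no proof of Theorem~\ref{Thm:birman_original}; it is quoted from \cite{birman1976stable} as background, and Birman's argument is only later adapted to handlebodies (Lemma~\ref{mixed_Birman_lemma} and Theorem~\ref{teo:main_equivalence}). Measured against Birman's original proof, your overall architecture is the right one: the \emph{if} direction, via the extension property of Hilden elements and the cancellation of the stabilization kinks, is correct, and decomposing an ambient isotopy into bridge-number-preserving moves and birth/death moves is the Morse-theoretic counterpart of Birman's polygonal decomposition into spike moves and stabilization moves (Lemma~\ref{Birman_lemma}). The verification you yourself flag as the main obstacle --- that every bridge-number-preserving isotopy is absorbed into \(K_{2m}\)-multiplication on the two sides --- is indeed the technical heart of Birman's paper; it is left unproven in your sketch, but it is at least correctly identified and correctly located.

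The genuine gap is elsewhere: your passage from ``same double coset at one level'' to ``for each \(m\ge t\)'' rests on the claim that ``a single further plat stabilization on both sides preserves membership in a common double coset'', which you present as an observation. It is not one. Algebraically, if \(B_1'=h_1B_2'h_2\) with \(h_1,h_2\in K_{2m}\), then \(B_1'\sigma_{2m}=h_1B_2'h_2\sigma_{2m}\), and to conclude you would need \(\sigma_{2m}^{-1}h_2\sigma_{2m}\in K_{2m+2}\); but conjugation by \(\sigma_{2m}\) does not carry \(K_{2m}\) into \(K_{2m+2}\). For instance \(h_2=\sigma_{2m-1}\in K_{2m}\), yet \(\sigma_{2m}^{-1}\sigma_{2m-1}\sigma_{2m}\) induces the permutation exchanging \(2m-1\) and \(2m+1\), which destroys the pairing \(\{2m-1,2m\},\{2m+1,2m+2\}\) that every element of \(K_{2m+2}\) must preserve, so it lies outside \(K_{2m+2}\). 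Geometrically, your claim amounts to uniqueness of perturbation of bridge decompositions (stabilizing at different points of the link yields equivalent decompositions), a genuine lemma requiring proof, not a remark. Birman's proof does not need it: she obtains the double coset relation at every level \(m\ge t\) directly, by rerunning the entire chain of spike/stabilization moves with the extra trivial bridges parked at the far right, rather than by transporting a fixed level-\(t\) relation upward; this is also how the paper's own handlebody analogue (Theorem~\ref{teo:main_equivalence}) is organized, with the stabilization \(st_n\) kept as an explicit move rather than as an operation on double cosets. Finally, your reserve route is circular: a ``Reidemeister--Singer type theorem'' for bridge spheres of isotopic links is not a consequence of Reidemeister--Singer for Heegaard splittings, but is essentially a restatement of the theorem you are trying to prove.
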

\noindent  Fig.~\ref{fig:birman_th} illustrates abstractly the move in Theorem~\ref{Thm:birman_original}.  It can be seen using Reidemeister I moves that these two braids, closed in the plat way, give rise to the same link isotopy class. 
\begin{figure}[H]
    \centering
    \includegraphics[width = .4\textwidth]{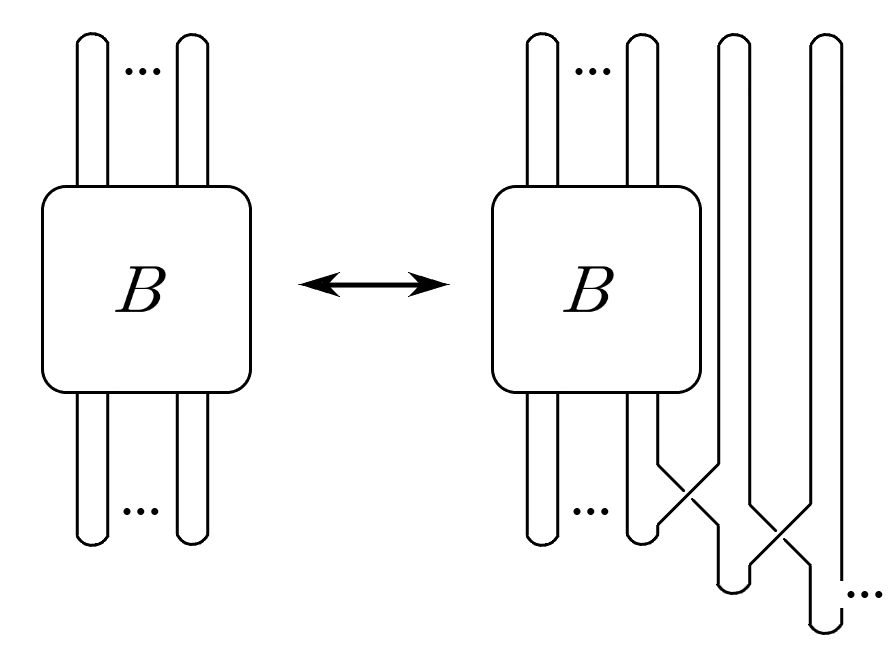}
    \caption{A braid \(B \in \mathcal{B}_{2n}\) and the braid \(B \sigma_{2n_i} \sigma_{2n_i+2} \dots \sigma_{2m-2} \in \mathcal{B}_{2m}\).}
    \label{fig:birman_th}
\end{figure}

\begin{figure}[H]
    \centering
    \includegraphics[width = .7\textwidth]{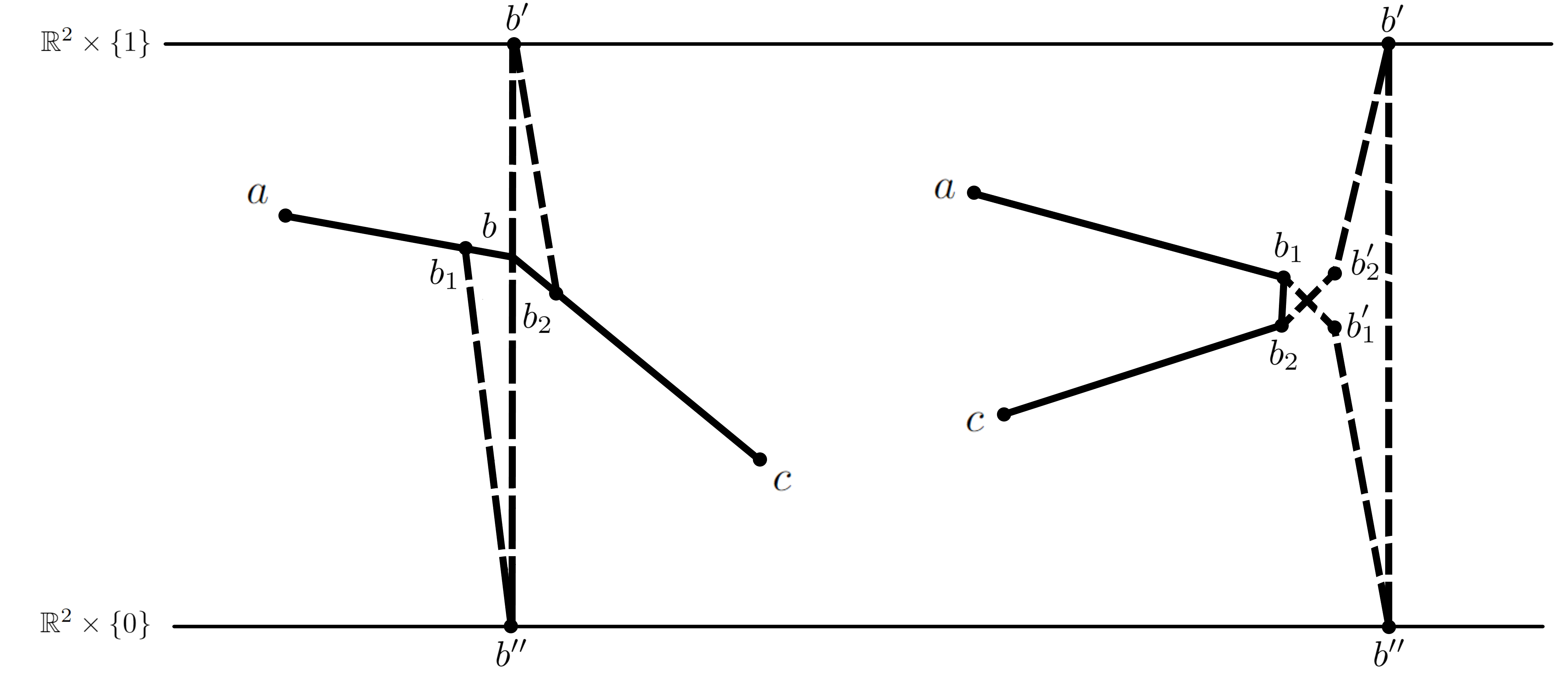}
    \caption{Two equivalent forms of the stabilization move.}
    \label{fig:classic_stabilization}
\end{figure}

The equivalence move in Theorem~\ref{Thm:birman_original}, as illustrated in Fig.~\ref{fig:birman_th}, is a multiple application of the {\it stabilization move} for plats (for a proper definition see Section~\ref{Equivalence}, Definition~\ref{def:mixedstabilization}). The stabilization move is depicted in Fig.~\ref{fig:classic_stabilization} in two equivalent forms up to plat equivalence: the right-hand form is created from the one on the left by performing a Reidemeister II move at the top part of the vertical arc and then suppressing one of the crossings by the equivalence; and vice-versa. In the Figure we, deliberately, have not indicated an over-arc for the crossing, as both types apply.

\smallbreak 
A crucial observation is now due.

\begin{remark} \rm 
Both instances of the stabilization move utilized in the equivalence relation between  plats are identical to the standardly closed versions of the \(L\)-moves (recall  Fig.~\ref{fig:L_move} in Section \ref{Section:standard_braiding_equivalence}), forgetting orientation. 
\end{remark}

Throughout, we will usually employ capital letters for braids closed in plat form.

\section{Braids and plats in handlebodies} \label{Handlebody}

In this section we first discuss links and braids in a handlebody and recall their representations in \(\mathbb{R}^3\), the mixed links and mixed braids  \cite{lambropoulou1997markov,haring2002knot}. We also recall from \cite{haring2002knot} the standard closure of  mixed braids, as adapted to handlebodies, and the analogues of braiding and braid equivalence for the standard closure. We proceed with defining the notion of plat closure for mixed braids and provide an algorithm for turning a link in a handlebody into a plat, directly analogous to that of \cite{birman1976stable} for \(\mathbb{R}^3\), and also to  \cite{cattabriga2018markov} for thickened surfaces and for closed, connected and orientable 3-manifolds. We proceed with defining the appropriate Hilden braid subgroup for the mixed braid group and formulate and prove the plat closure mixed braid equivalence in the setting of handlebodies. 



\subsection{Handlebody representation in $\mathbb{R}^3$} 

Let \(H_g\) be the abstract handlebody of genus \(g\). \(H_g\) can be constructed from a 3-ball with $g$ 1-handles attached (see left part of Fig.~\ref{fig:standard_handles}). Note that, for $g=0$ then \(H_0\) is just the 3-ball, while for $g=1$ then \(H_1\) is the solid torus.  Equivalently, \(H_g\)  can be constructed as the  product space of the $g$-punctured disc, $D_g$, cross the interval (middle illustration of Fig.~\ref{fig:standard_handles}): 
 \(H_g = D_g \times [0,1].\)

\begin{figure}[H]
    \centering
    \includegraphics[width = \textwidth]{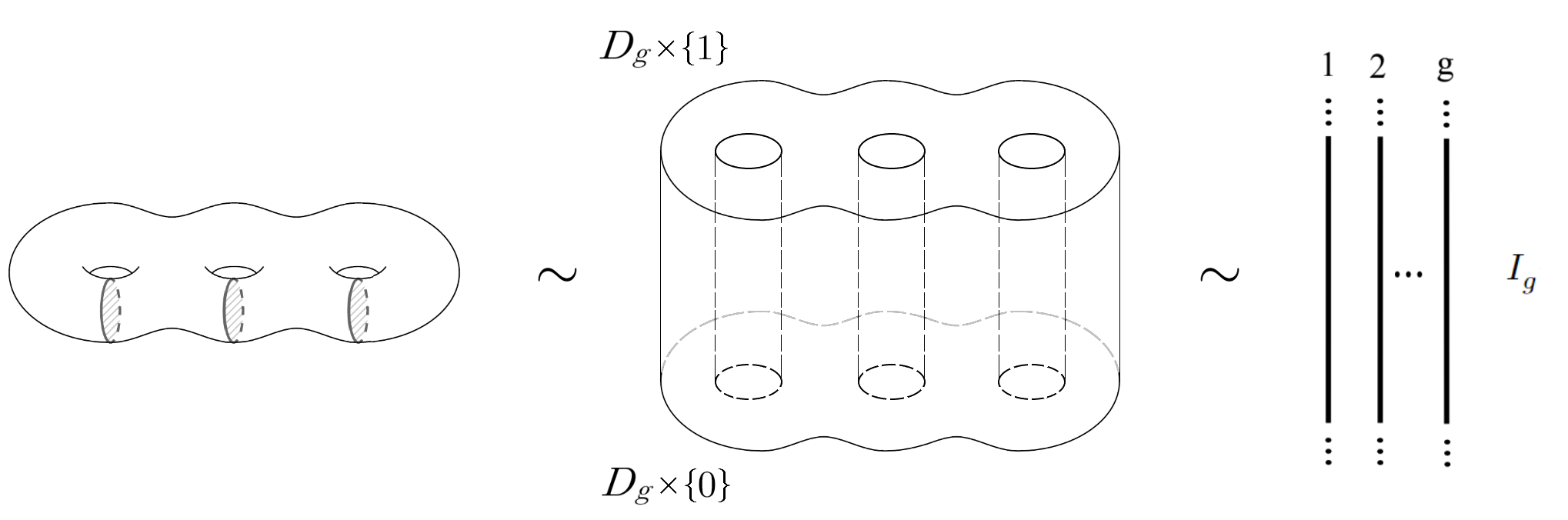}
    \caption{Abstract handlebody of genus 3 and its representation in  \(\mathbb{R}^3\). }
    \label{fig:standard_handles}
\end{figure}

Hence \(H_g\),  as embedded in  \(\mathbb{R}^3\), may be viewed as the complement of the identity braid \(I_g\) on \(g\) indefinitely extended strands  (see right illustration of Fig.~\ref{fig:standard_handles}). For viewing \(H_g\) in \(S^3\) we consider the "compactification", namely all strands of \(I_g\)  meeting at the point at infinity.  (Then a thickening of the resulting graph is the complementary handlebody of \(H_g\) in \(S^3\)).

\subsection{Links in handlebodies and mixed links}

Let now \(L\) be a link in \(H_g\). Then, using the above representation. we can consider \(L\) to lie in the interior of \(H_g = D_g \times [0,1]\), except perhaps for finitely many points lying in the boundary components \(D_g \times \{0,1\}\). Further, fixing \(I_g\) pointwise, \(L\) may be unambiguously represented by the mixed tangle (or {\it mixed link}) \(I_g  \cup L\), see left illustration of Fig.~\ref{fig:mixed_tangle}. $I_g$ forms the so-called {\it fixed part} of the mixed link, while $L$ forms the so-called {\it moving part} of the mixed link. If we remove $I_g$ from \(I_g  \cup L\) we are left with a link $L$ in $\mathbb{R}^3$, which we shall call a {\it template} for the initial link $L$ in $H_g$. Note that, for a given template $L$ in $\mathbb{R}^3$ there are infinitely many links in $H_g$ having $L$  as the moving part and $I_g$ as the fixed part, that is, there are infinitely many links in $H_g$ with the given template. Further, two (polygonal) links in $H_g$ are called {\it isotopic} if there is a finite sequence of the well-known $\Delta$-moves taking one to the other. An analogous definition carries through to the corresponding mixed links, see  \cite{lambropoulou1997markov,haring2002knot}.

\begin{figure}[H]
    \centering
    \includegraphics[width = 1\textwidth]{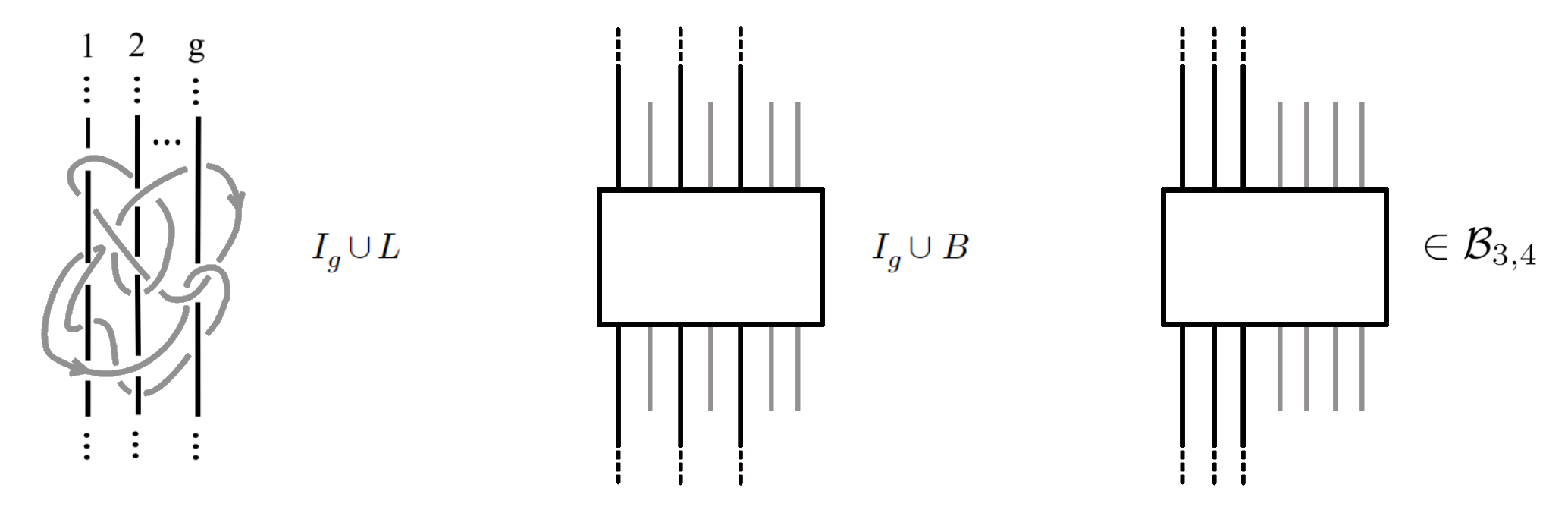}
    \caption{left: a mixed link, middle: an abstract geometric mixed braid, right: an abstract algebraic mixed braid.}
    \label{fig:mixed_tangle}
\end{figure}

\subsection{Braids in handlebodies,  mixed braids and the mixed braid group}\label{Mixed_braids}

\begin{definition} \rm 
A {\it braid  $\beta$ in $H_g$ on $n$ strands} is defined as an embedding \(\phi\) of \(n\) distinct copies of \(I = [0,1]\), \(I_j = [0,1], j = 1, \dots, n\) in \(H_g\), such that each \(\phi_j := \phi |_{I_j}\) is monotonous, and  the top $n$ endpoints of the braid $\beta$ lie in $D_g \times \{0\}$ and the corresponding bottom \(n\) endpoints of $\beta$ lie in $D_g \times \{1\}$, in the realization of $H_g $ as $ D_g \times [0,1]$. 
\end{definition}

Following \cite{lambropoulou1997markov,haring2002knot}, the braid  $\beta$  in $H_g$  is represented uniquely in $S^3$ by a {\it geometric mixed braid} on $n$ moving strands, denoted $I_g \cup \beta$, which is an element of the Artin braid group \(\mathcal{B}_{g+n}\)  and which contains the identity braid $I_g$ on \(g\) strands as a fixed subbraid (see abstraction in middle illustration of Fig.~\ref{fig:mixed_tangle}). 

By removing $I_g$  we are left with the {\it moving subbraid} $\beta$ on $n$ strands. We shall call this braid $\beta$ in $\mathbb{R}^3$ a {\it template} for the initial braid $\beta$ in $H_g$. Note that, for a given template $\beta$ in $\mathbb{R}^3$ there are infinitely many geometric mixed braids having $\beta$ as the moving subbraid and $I_g$ as the fixed subbraid and, consequently, infinitely many braids in $H_g$ with the given template. 

A special case of geometric mixed braid on $g+n$ strands  is one for which, by removing the last \(n\) strands  we are left with the identity braid  $I_g$.  Such a mixed braid shall be called {\it algebraic mixed braid} (notion introduced in  \cite{lambropoulou2000braid}). See abstraction in rightmost illustration of Fig.~\ref{fig:mixed_tangle}). We shall use the same notation for  algebraic mixed braids as for the generality of  geometric mixed braids.

It is established in \cite{lambropoulou2000braid} that the set of all algebraic mixed braids on $g$ fixed strands and $n$ moving strands form a group, the {\it  mixed braid group},  \(\mathcal{B}_{g,n}\). In other words, \(\mathcal{B}_{g,n}\) is the subgroup of all elements in  \(\mathcal{B}_{g+n}\) for which the first $g$ strands form the identity braid  $I_g$. Of course, for \(g = 0\), we obtain the standard definition of the Artin braid group in \(\mathbb{R}^3\), \(\mathcal{B}_n\), while   for \(g = 1\), we obtain the Artin braid group of type $B$, $\mathcal{B}_{1,n}$, which is the algebraic counterpart of knot theory in the solid torus and in the lens spaces (see \cite{lambropoulou1994torus}).  It is further established in \cite{haring2002knot} that the groups \(\mathcal{B}_{g,n}\), $n\in \mathbb{N}$, comprise the algebraic counterpart for knots and links in \(H_g\).  

In \cite{lambropoulou2000braid}, it is proved that \(\mathcal{B}_{g,n}\) has a presentation with generators: the elementary crossings among moving strands  $\sigma_1, \dots, \sigma_{n-1}$ (see Fig.~\ref{fig:generat_L}) and  the {\it loop generators } $\alpha_1, \dots, \alpha_g$, where \(\alpha_i\) represents a right-handed looping of the first moving strand around the $i$-th fixed strand, as depicted  in Fig.~\ref{fig:generat_L}, where also the inverse of a loop generator is depicted. Further, the generators satisfy the relations: 
\begin{equation} 
\begin{array}{rcll}
\sigma_k \sigma_j &=& \sigma_j \sigma_k & |k-j|>1, \\ 
\sigma_k \sigma_{k+1} \sigma_k &=& \sigma_{k+1} \sigma_k \sigma_{k+1} & 1\leq k\leq n-1,  \\
\alpha_r \sigma_k &=& \sigma_k \alpha_r  & k\geq 2, \  1\leq r \leq g, \\
\alpha_r \sigma_1 \alpha_r \sigma_1 &=& \sigma_1 \alpha_r \sigma_1 \alpha_r & 1 \leq r \leq g, \\
\alpha_r (\sigma_1 \alpha_s \sigma_1^{-1}) &=& (\sigma_1 \alpha_s \sigma_1^{-1}) \alpha_r & s < r.
\end{array}
\end{equation}

\begin{figure}[H]
    \centering
    \includegraphics[width = .8\textwidth]{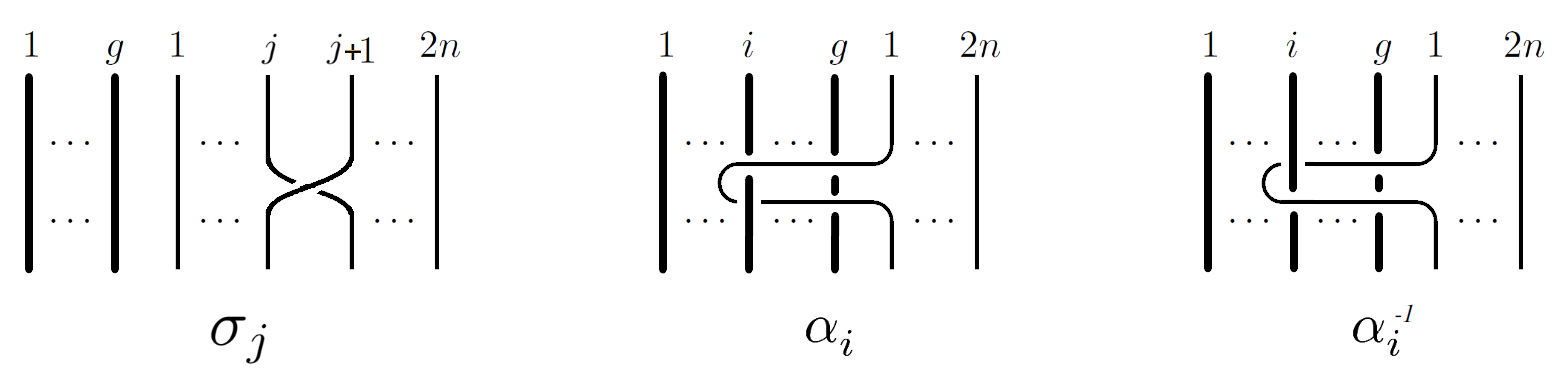}
    \caption{The generators of \(\mathcal{B}_{g,n}\).}
    \label{fig:generat_L}
\end{figure}

From the above, an algebraic mixed braid $I_g \cup \beta$ on $n$ moving strands represents uniquely in $\mathbb{R}^3$  a braid  $\beta$ on $n$  strands in \(H_g = D_g \times [0,1]\), such that its endpoints lie all  within a disc on the right-hand side of $D_g$. This leads to the following:

\begin{definition}\label{def:algebraic_braid}
 A braid  $\beta$ on $n$  strands in \(H_g = D_g \times [0,1]\), such that its  endpoints lie all  within a disc on the right-hand side of $D_g$ shall be called {\it algebraic braid in \(H_g\)}.  
\end{definition}

\begin{remark}\label{rmk:braid_surface_disk} \rm 
Fixing a set of $n$ points \( \mathcal{C} = \{C_1,\ldots,C_n\} \) in $D_g$ we observe that the set of all braid on $n$ strands in $H_g$, with endpoints in  \( \mathcal{C}\) forms a group by concatenation along the surface $D_g$. Clearly, groups related to different sets of endpoints are isomorphic. So, without loss of generality, one may assume that the set \( \mathcal{C}\) lies in a disc $A$ on the right-hand side of $D_g$, as in Fig.~\ref{fig:handlebody_disk}.  By definition, these groups for different sets of endpoints coincide with the {\it surface braid group of $D_g$ on $n$ strands}, which we shall denote $\mathcal{B}_{D_g,n}$ (the surface braid groups were introduced in \cite{fadell1962configuration}, see also \cite{gonzalez2001new, bellingeri2004presentations}). Furthermore, by the well-defined correspondence between mixed braids and braids in $H_g$, and of their operations, the surface braid group  $\mathcal{B}_{D_g,n}$ is easily shown to be isomorphic to the mixed braid group \(\mathcal{B}_{g,n}\).
\end{remark}

\begin{figure}
    \centering
    \includegraphics[width = .5\textwidth]{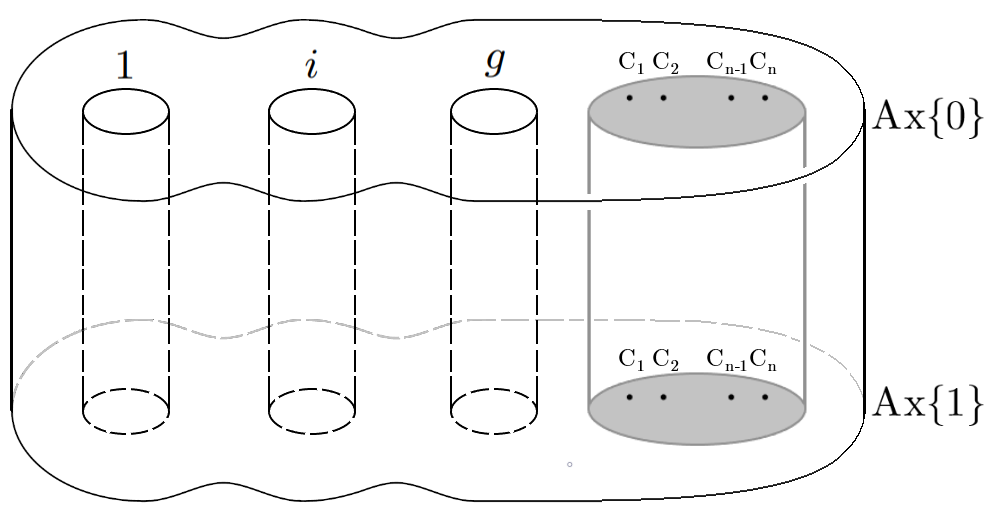}
    \caption{The set of points \(C_1, C_2, \dots, C_n\) on the two levels \(D_g \times \{0\}\) and \(D_g \times \{1\}\).}
    \label{fig:handlebody_disk}
\end{figure}

\section{Representing links via braids in handlebodies }\label{sec:plat_closure_mixed}

\subsection{The standard closure for mixed braids, the mixed braiding and the mixed braid equivalence }\label{}

Let $I_g\cup \beta$ be a geometric mixed braid, representing uniquely the braid \(\beta\) in \(H_g\). We recall from \cite{haring2002knot} that, in order to define well the (standard) closure of $I_g\cup \beta$, we first label each pair of corresponding endpoints of \(\beta\) with `o' (for over) or `u' (for under). Then, the {\it standard closure} of \(I_g \cup \beta \) is an oriented mixed link obtained by joining with simple unlinked arcs each pair of corresponding endpoints of the moving subbraid \(\beta\), according to its label: the simple arc will be lying entirely over or entirely under the rest of the mixed braid, if its label is `o'  or `u' respectively. See Fig.~\ref{fig:mixed_tangle_2}(a) for an abstract illustration. 

Note that, the definition of standard closure for mixed braids induces the definition of {\it standard closure for braids in \(H_g = D_g \times [0,1]\)}.  

\begin{figure}[H]
    \centering
    \includegraphics[width = .65\textwidth]{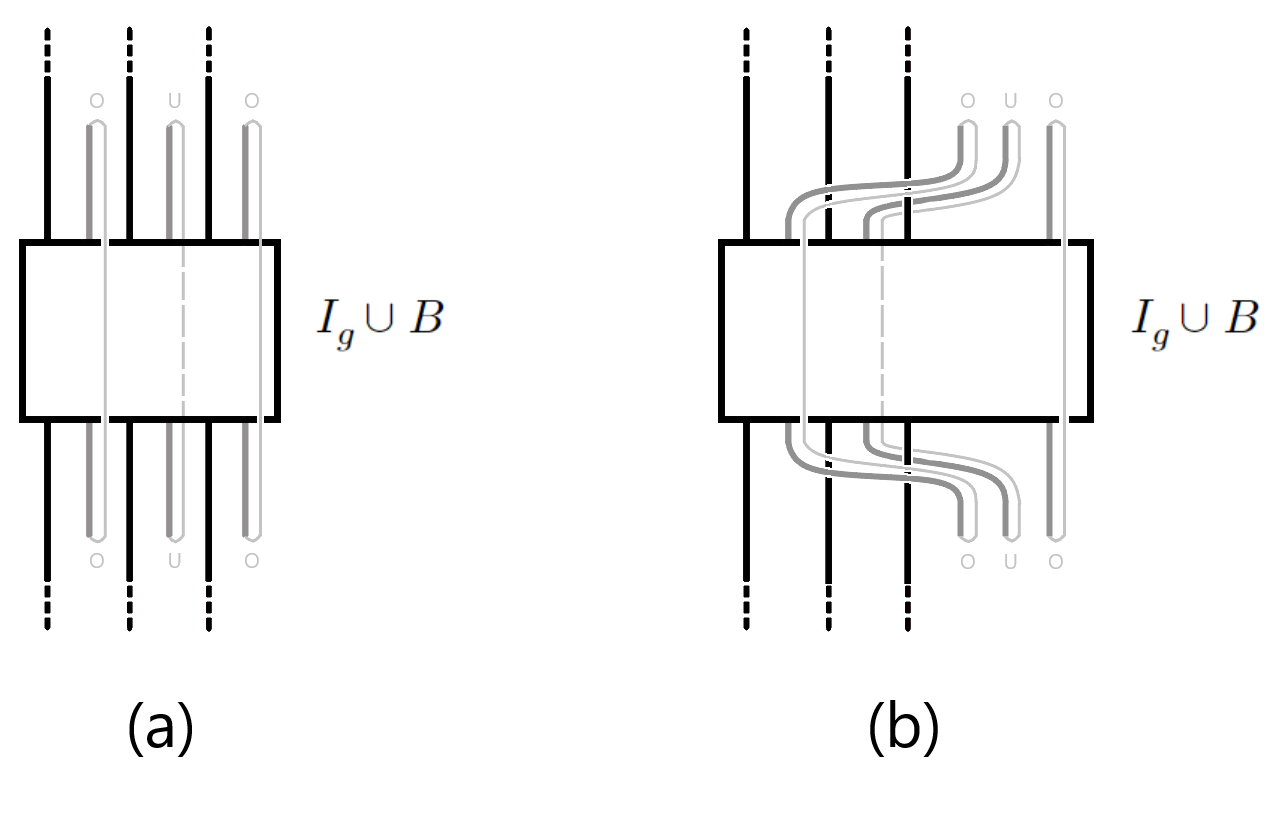}
    \caption{(a) the standard closure of a geometric mixed braid; 
    (b) a parted mixed braid and its homologous closure. }
    \label{fig:mixed_tangle_2}
\end{figure}

As pointed out in \cite{haring2002knot},  different choices of labels for a geometric mixed braid will, in general, result in non-isotopic mixed links. Yet, this is not the case if one restricts to algebraic mixed braids, since a closing arc can freely move through isotopy from the front to the back of the mixed braid (and vice versa), without intersecting any fixed strand.   

The inverse of the closing operation is also true \cite{haring2002knot}: 
any oriented mixed link (representing uniquely an oriented link in $H_g$), can be braided to a geometric mixed braid with isotopic standard closure. This is an analogue of the classical Alexander theorem for knots and links in $H_g$.  Furthermore,  the parting operation enables a geometric mixed braid to be represented  by an algebraic mixed braid with isotopic standard closure. Fig.~\ref{fig:mixed_tangle_2}(b) depicts a parted mixed braid and its homologous closure.  Hence, {\it any oriented mixed link can be braided to an algebraic mixed braid with isotopic standard closure.} This is the algebraic analogue of the  Alexander theorem for knots and links in $H_g$. 

Moreover, mixed link isotopy for oriented links in $H_g$ is translated in \cite{haring2002knot} into  mixed braid equivalence: first as $L$-equivalence among geometric mixed braids (an analogue of the one-move Markov theorem), and then as an algebraic mixed braid equivalence in $\bigcup_n{\mathcal{B}_{g,n}}$ (an analogue of the classical Markov theorem). For details the reader may consult \cite{haring2002knot}.

\subsection{The plat closure for braids in  $H_g$ and for mixed braids}\label{Main_result}

\begin{definition}
Let $I_g\cup B$ be a geometric mixed braid with an even number of moving strands, say $2n$. For defining the plat closure of $I_g\cup B$ we first number the set of top endpoints of $B$, from left to right, with numbers $1, 1^\prime, 2, 2^\prime, \ldots, n, n^\prime$. To each pair $(i, i^\prime)$ a label `o' (for over) or `u' (for under) is assigned. Then, we use a simple arc, say $\gamma_i$, as joining arc for the pair of endpoints $(i, i^\prime)$, so that $\gamma_i$ runs entirely over or entirely under any fixed strand of the mixed diagram, according to the label of $(i, i^\prime)$.  We repeat the above procedure, independently, for the set of corresponding bottom endpoints. `Independently' in the sense that the labels of the closing arcs, say $\delta_i$, for joining the pairs of bottom endpoints are not related to the labels of the closing arcs $\gamma_i$ for joining the pairs of top endpoints. View Fig.~\ref{fig:example_plats} for an example.  The resulting mixed link is a {\it plat closure} for $I_g\cup B$, called {\it mixed plat}, and shall be denoted $I_g\cup \overline{B}$. 

In the special case where  $I_g\cup B$ is an algebraic mixed braid the resulting plat shall be called {\it algebraic mixed plat}.
\end{definition}

\begin{remark} \rm 
As for the case of standard closure, the definition of plat closure for mixed braids induces the definition of {\it plat closure for braids in \(H_g = D_g \times [0,1]\)}. Only, here we also add small local isotopies of the closing arcs of a braid $B$, so that the resulting plat, denoted $\overline{B}$, lies in the interior of $H_g$, except perhaps finitely many points. 
\end{remark}

\begin{remark}\label{labels_plat} \rm
Different labels for closing arcs may result in non-isotopic plats. An example is illustrated in Fig.~\ref{fig:example_plats}.  Yet, note that  the label of a joining arc is irrelevant if no fixed strands of $I_g$ lie between the two endpoints. It follows, in particular, that if $I_g\cup B$ is an algebraic mixed braid, then no labels are required for forming the plat closure, as no choices are involved in this case and the plat closure is determined uniquely.   
\end{remark}

\begin{figure}[H]
    \centering
    \includegraphics[width = .6\textwidth]{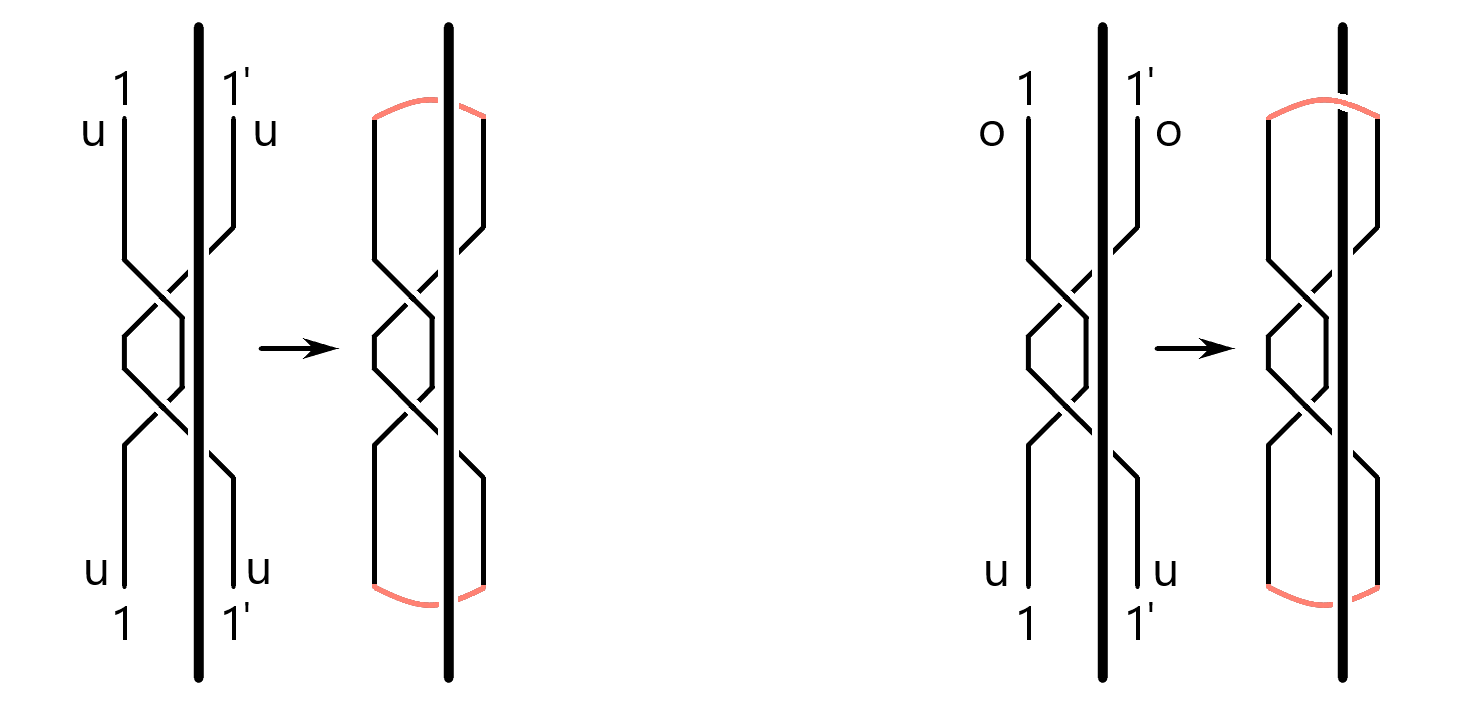}
    \caption{Two different examples of plat closure of a mixed braid. The two resulting links are not isotopic. }
    \label{fig:example_plats}
\end{figure}

\subsection{Representing mixed links as plat closures of mixed braids}\label{Main_theorem}

Let  \(L\) be a link in $H_g$ represented by the mixed link \(I_g\cup L\). We want to show that the mixed link \(I_g\cup L\) may be isotoped to the (labelled) plat closure of a geometric mixed braid, providing thus an analogue of the Birman braiding theorem for knots and links in $H_g$. Namely, we have the following, which is also an analogue of the  braiding theorem in $H_g$, but for plats.

\begin{theorem}\label{teo_braiding_plat_hand} 
Let  \(L\) be a link in  the genus \(g\) handlebody  $H_g$, represented by the mixed link \(I_g\cup L\) in $\mathbb{R}^3$. Then \(L\)  may be braided to a   braid in $H_g$ with even number of  strands, whose  plat closure is isotopic to \( L\) in  $H_g$. Equivalently, \(I_g\cup L\)  may be braided to a geometric mixed braid with even number of moving strands, whose  plat closure is isotopic to \(I_g\cup L\). 
\end{theorem}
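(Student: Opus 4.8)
The plan is to adapt Birman's braiding algorithm, recalled just above in the excerpt, to the mixed link setting while keeping the fixed part $I_g$ rigid throughout. I would work with the diagram of $I_g\cup L$ in $\mathbb{R}^3$, viewing the fixed strands as $g$ vertical segments and $L$ as the moving part, and I would only ever isotope $L$, never $I_g$. In this way the picture remains a genuine mixed link at every stage and, after braiding, a geometric mixed braid with $I_g$ as fixed subbraid, so that the resulting link type in $H_g$ is preserved.

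First I would put $L$ in general position with respect to the vertical (top-to-bottom) direction: by a small isotopy supported away from $I_g$, arrange that the projection of $L$ has no horizontal arcs, that its local maxima, local minima and crossings occur at pairwise distinct heights, and that all crossings between $L$ and the fixed strands lie in an intermediate band, away from the extremal points. Since each fixed strand is already monotone, $I_g$ contributes neither maxima nor minima. Following Birman, I would then pull every local maximum of $L$ up to a common top level and every local minimum down to a common bottom level, as abstracted in Fig.~\ref{fig:braiding}. The arcs joining consecutive extrema become monotone descending strands, so the middle portion is a braid on the moving strands which, together with $I_g$, is a geometric mixed braid $I_g\cup B$. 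Because the number of maxima of a closed $1$-manifold diagram equals its number of minima, say $n$, the braid $B$ has $2n$ strands, an even number; the $n$ caps left at the top and the $n$ cups left at the bottom are precisely the joining arcs of a plat closure, so $I_g\cup L$ is isotopic to the mixed plat $I_g\cup\overline{B}$, and dually $L$ is the plat closure of the braid $B$ in $H_g$.

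The step demanding care, with no counterpart in the classical $\mathbb{R}^3$ argument, is the over/under labelling of the closing arcs relative to $I_g$: a priori a single cap coming from a maximum could pass over some fixed strands and under others, whereas by definition each joining arc $\gamma_i$ (resp.\ $\delta_i$) of a mixed plat must run \emph{entirely} over or \emph{entirely} under the fixed part. To repair this I would, when a cap meets $I_g$ inconsistently, slide the minority crossings downward along the relevant (vertical) fixed strand into the braiding band, so that such a crossing is converted into a crossing of a moving strand with that fixed strand inside $B$; where a fixed strand genuinely separates the two endpoints of a cap, I would instead split that maximum into an extra max--min pair, localizing each remaining intersection to a cap that meets $I_g$ on a single side. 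All of these are ambient isotopies (or local diagram modifications) of $L$ fixing $I_g$, so they preserve the link type of $I_g\cup L$ in $H_g$ while rendering every closing arc consistently labelled. I expect this label-consistency argument to be the main obstacle: one must check that the downward pushing and the cap-splitting can always be performed without creating new inconsistencies and without disturbing the identity subbraid $I_g$.

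A cleaner route that bypasses most of this bookkeeping, which I would present as an alternative, is to invoke the mixed Alexander theorem of \cite{haring2002knot} to first represent $I_g\cup L$ as the standard closure of a geometric mixed braid — whose closing arcs are by construction each entirely over or entirely under the rest of the diagram — and then to convert that standard closure into a plat by the folding device of \cite[Theorem 3.2]{cavicchioli2023passing}: each arc running around the back of the braid is replaced by a straight return strand capped off at top and bottom, doubling the number of moving strands to $2n$ and turning the closure into a genuine plat. Since the labels of the new closing arcs are then inherited from the already-consistent labels of the standard closure, no further repair is needed. Either way, the essential new content beyond the $\mathbb{R}^3$ case is exactly the control of how the plat-closing arcs interact with the fixed strands $I_g$.
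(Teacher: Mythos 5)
Your primary route is essentially the paper's own proof: adapt Birman's braiding to the mixed setting, keep $I_g$ pointwise fixed, and pull the extrema of the moving part to the two extreme levels. The paper phrases this intrinsically: it takes a polygonal representative of $L$ inside $D_g\times[0.25,0.75]$ in standard position (following \cite[Lemma~2]{birman1976stable}) and replaces each extremal vertex $b_j$ by a spike along the line $l_j$ orthogonal to $D_g$, running up to $D_g\times\{1\}$ or down to $D_g\times\{0\}$. The payoff of this phrasing is that your ``main obstacle'' evaporates: the spikes are parallel to the fixed strands, so they create no crossings with $I_g$, and the resulting closing arcs are small arcs localized at the touch points with the boundary discs, whose horizontal positions are (by general position) distinct from those of the punctures; hence no closing arc meets $I_g$ at all, and by Remark~\ref{labels_plat} the labels are then irrelevant and the mixed plat is determined. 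In other words, if each maximum is pulled straight up with its cap kept small, rather than moved to prescribed endpoint positions, a cap never crosses a fixed strand, so your repair machinery (pushing minority crossings into the braid band, splitting maxima into max--min pairs) is unnecessary for this theorem. That machinery is exactly what is needed one step later, when the statement is sharpened to \emph{algebraic} mixed braids (Theorem~\ref{teo:lemma_algebraic_handleb}): there the closing arcs must genuinely be dragged past fixed strands, and the paper does this by parting via spike moves, which is the rigorous form of your sliding/splitting operations. Finally, your alternative route --- the mixed Alexander theorem of \cite{haring2002knot} followed by the standard-to-plat conversion of \cite[Theorem~3.2]{cavicchioli2023passing} --- is a valid and genuinely different proof: it is the handlebody version of the observation the paper itself records for $\mathbb{R}^3$, and it trades self-containedness for brevity, since it outsources both the braiding (to the standard-closure theory) and the conversion (to the cited result) instead of producing the plat directly.
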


\begin{proof} 
We shall adapt the braiding algorithms of \cite{birman1976stable, cattabriga2018markov} to our setting. Indeed, let \(\overrightarrow{D_g}\) be the unit vector orthogonal to \(D_g\).
It is always possible (using local isotopies, if needed) to consider \(L \subset D_g \times [0.25, 0.75]\). 


Consider a polygonal representation of \(L\), with set of vertices \(\{b_0, b_1, \dots, b_n\}\) and with no edges parallel to \(D_g \times \{0\}\) (which is always possible by a general position argument). Following \cite[Lemma 2]{birman1976stable}, we have that for any 3 consecutive vertices \(b_{j-1}, b_j, b_{j+1}\) of \(L\) the line \(l_j\) orthogonal to \(D_g\) through \(b_j\) admits a neighbourhood \(N_j\) which meets the link only in the edges \([b_{j-1}, b_j]\) and \([b_j, b_{j+i}]\) (see Fig.~\ref{fig:braiding_2}). We will say that \(L\) is in standard position. 

\begin{figure}[H]
    \centering
    \includegraphics[width = .7\textwidth]{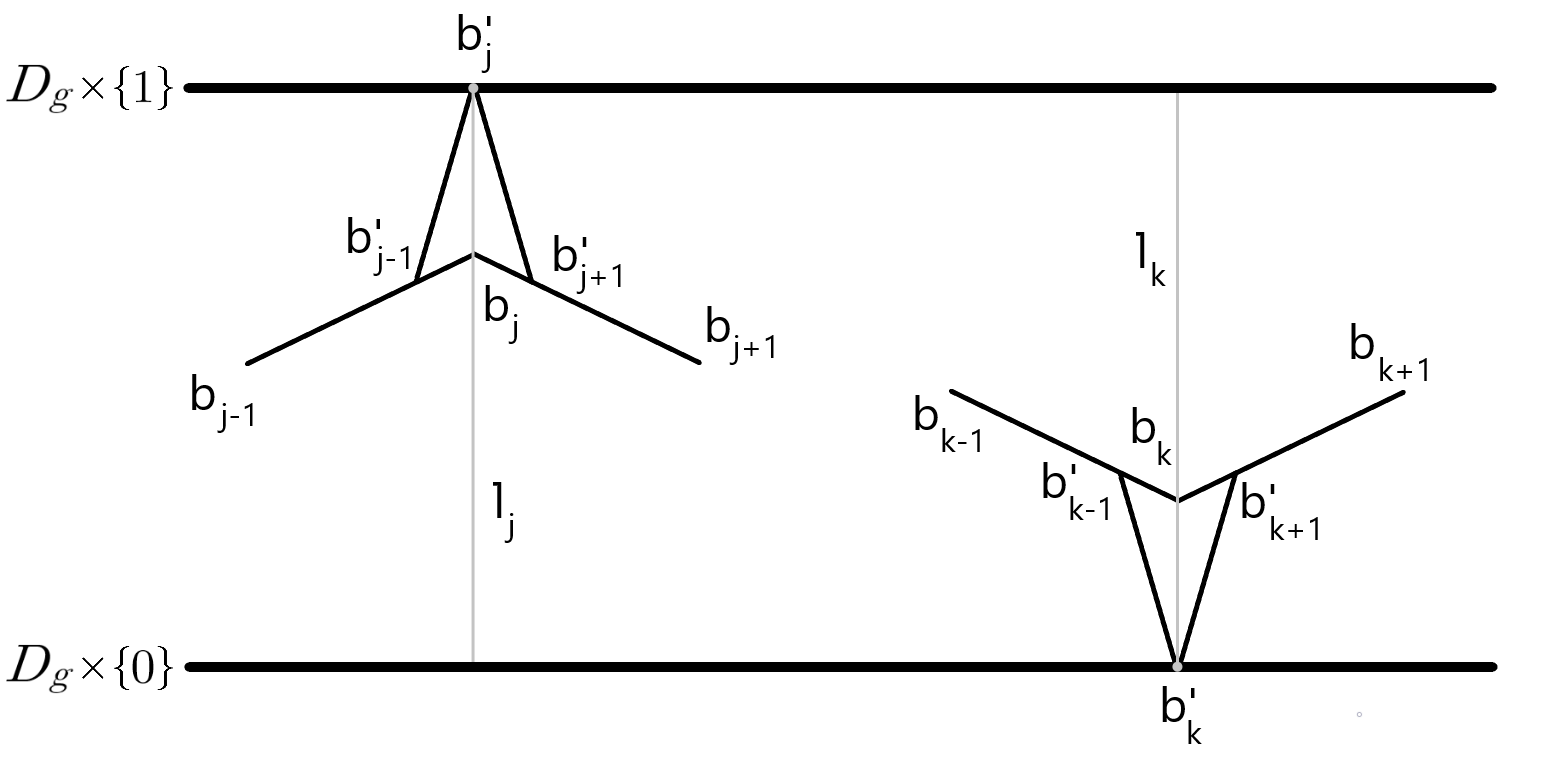}
    \caption{The braiding process. }
    \label{fig:braiding_2}
\end{figure}

Then, starting from \(b_0\), for every three points \(b_{j-1}, b_j, b_{j+1}\) with 
\[\left(\overrightarrow{[b_{j-1}, b_j]} \cdot \overrightarrow{D_g}\right)\cdot \left(\overrightarrow{[b_j, b_{j+1}]} \cdot \overrightarrow{D_g}\right) < 0\] 
(meaning that \(b_j\) is either a local maximum or a local minimum) consider the intersections \([b_{j-1}, b_j] \cap N_j = b'_{j-1}\), \([b_j, b_{j+1}] \cap N_j = b'_{j+1}\). If \(\overrightarrow{[b_{j-1}, b_j]} \cdot \overrightarrow{D_g} >0\) (resp. \(<0\)) let \(b'_j = l_j \cap D_g \times \{1\}\) (resp \(b'_j = l_j \cap D_g \times \{0\}\)). If we replace the edge sequence \(b_{j-1}, b_j, b_{j+1}\) with \(b_{j-1}, b'_{j-1}, b'_j, b'_{j+1}, b_{j+1}\) the isotopy class of the link does not change, while now the link intersects once \(D_g \times \{0, 1\}\). 

Repeating this operation for every three points satisfying the above condition, we will obtain an isotopic link meeting the discs \(D_g \times \{0\}\) and \(D_g \times \{1\}\) in exactly, say, \(n\) points, and every other disc  \(D_g \times \{t\}, t \in (0,1)\), in exactly \(2n\) points. It follows from Artin's definition of a braid \cite{artin1947theory} that such a link determines a well-defined geometric mixed braid \(I_g \cup B \) on $2n$ moving strands. In turn, this mixed braid represents the link  \(L\)  via plat closure in \(H_g = D_g \times [0,1]\). 
\end{proof}

Furthermore, we prove that any mixed  link (resp. link in \(H_g\)) may be braided to (resp. represented by)  an {\it algebraic  mixed braid} (recall Definition \ref{def:algebraic_braid}) via plat closure. For this we first recall from \cite{birman1976stable} and adapt the definition of the spike move.

\begin{definition} \label{def:mixedspike}
Let \(\overline{B}\) be a link  in \(H_g\) in plat form, represented in \(\mathbb{R}^3\) by the mixed plat $I_g \cup \overline{B}$. Let also \(b_i, b_{i+1}, b_{i+2}\) be three consecutive points of \(\overline{B}\), such that \(b_{i+1}\) is a boundary point (i.e. \(\in D_g \times \{0,1\}\)) and such that the triangle \([b_i, b_{i+1}, b_{i+2}]\) is entirely contained in \(D_g \times [0,1]\) and intersects \(\overline{B}\) only on its sides \([b_i, b_{i+1}]\) and \([b_{i+1}, b_{i+2}]\) (see Fig.~\ref{fig:spike_move}). Edges like \([b_i, b_{i+1}], [b_{i+1}, b_{i+2}]\) are called \textit{boundary edges}. Let now \(b'_{i+1}\) be a new boundary point on the same boundary component as \(b_{i+1}\) and let \(b'_i, b'_{i+2}\) be interior points  satisfying the conditions:
\begin{itemize}
    \item \([b'_{i}, b'_{i+1}, b'_{i+2}] \cap L = \emptyset\) and  \([b'_{i}, b'_{i+1}, b'_{i+2}]\) lies entirely in \(D_g \times [0,1]\);
    \item for the region \([b_i, b'_i, b'_{i+2}, b_{i+2}]\) bounded by the points \(b_i, b'_i, b'_{i+2}, b_{i+2}\) we have: \\ \([b_i, b'_i, b'_{i+2}, b_{i+2}] \subset D_g \times [0,1] \ \ \& \ \ [b_i, b'_i, b'_{i+2}, b_{i+2}] \cap L = \{b_i, b_{i+2}\}\). 
\end{itemize}
A \emph{spike move} for the plat $\overline{B}$ consists in replacing the edges defined by the sequence \(b_{i}, b_{i+1}, b_{i+2}\) by the edges defined by the sequence \(b_{i}, b'_{i}, b'_{i+1}, b'_{i+2}, b_{i+2}\). Then the isotopy class of the plat \(\overline{B}\) does not change and the spike move preserves the plat form. 

A {\it spike move for the mixed plat} $I_g \cup \overline{B}$ is defined analogously, except that two of the conditions above are replaced by the conditions: 
$$
[b'_{i}, b'_{i+1}, b'_{i+2}] \cap (I_g \cup \overline{B}) = \emptyset \ 
\mbox{and} \ 
[b_i, b'_i, b'_{i+2}, b_{i+2}] \cap I_g = \emptyset.
$$
The conditions imply, in particular, that if a fixed strand overlies (resp. underlies) the triangle \([b_i, b_{i+1}, b_{i+2}]\) then the fixed strand also overlies (resp. underlies) the replacement of the spike move. Then the isotopy class of the mixed plat $I_g \cup \overline{B}$  does not change and the spike move preserves the mixed plat form. 
\end{definition}

\begin{figure}[H]
    \centering
    \includegraphics[width = .5\textwidth]{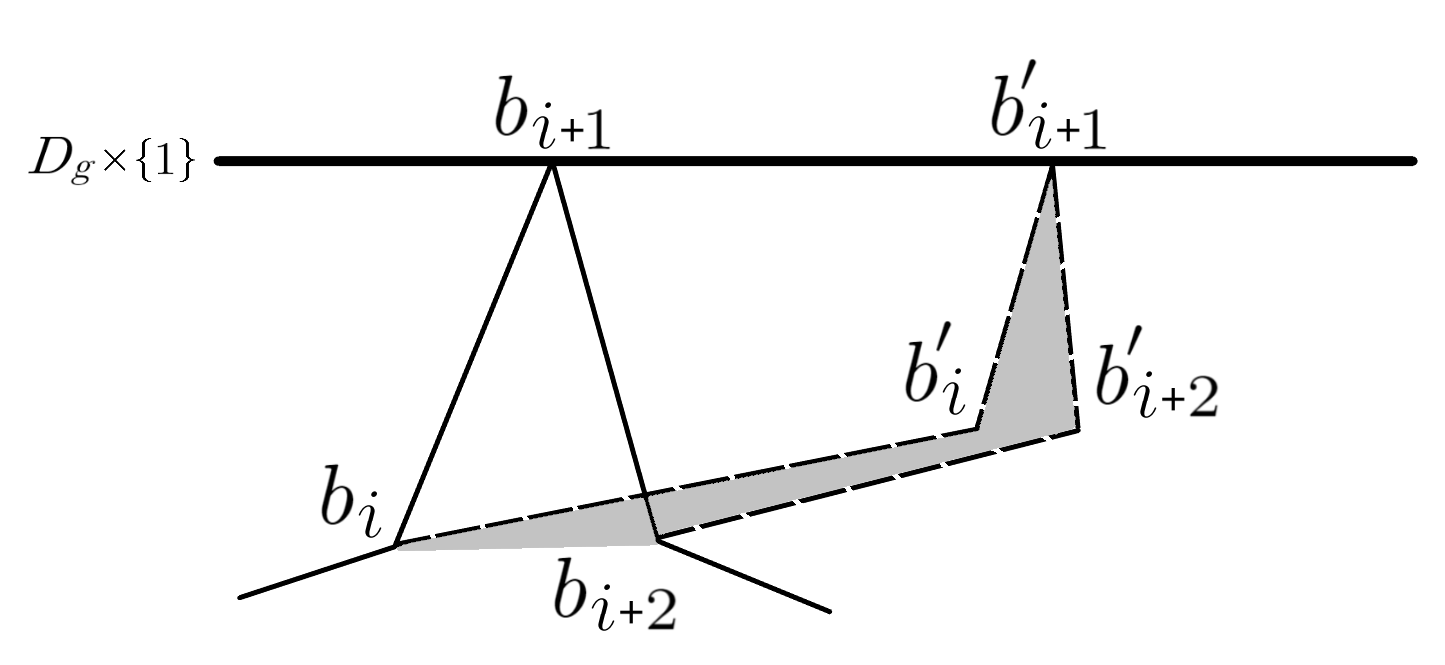}
    \caption{A spike move. }
    \label{fig:spike_move}
\end{figure}

We can now sharpen Theorem~\ref{teo_braiding_plat_hand} to representing links in $H_g$ as plat closures of algebraic mixed braids. 

\begin{theorem}\label{teo:lemma_algebraic_handleb}
Let \(L\) be a link in the genus \(g\) handlebody $H_g$, represented by the mixed link \(I_g\cup L\) in $\mathbb{R}^3$. Then \(L\) may be  braided to an algebraic  braid in $H_g$  with even number of  strands,   whose  plat closure is isotopic to \(L\) in $H_g$. 
Equivalently, \(I_g\cup L\)  may be braided to an algebraic mixed braid \(I_g\cup B \in \mathcal{B}_{g,2n}\), for some $n$, representing \(I_g\cup L\) via plat closure. 
\end{theorem}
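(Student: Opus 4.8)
The plan is to start from the geometric mixed braid $I_g \cup B$ on $2n$ moving strands produced by Theorem~\ref{teo_braiding_plat_hand}, whose plat closure is isotopic to $I_g \cup L$, and to convert it into an algebraic mixed braid by adapting the parting operation of \cite{haring2002knot} to the plat setting. Recall that, for standard closures, one parts a geometric mixed braid by dragging the endpoints of the moving strands, one at a time, to the right of all fixed strands, letting each strand pass consistently over or under the fixed strands it meets and wind around them as prescribed by the loop generators $\alpha_i^{\pm 1}$. For plats the same goal must be achieved while keeping the closing arcs joining adjacent pairs of endpoints, that is, preserving the plat form; this is exactly what the spike move of Definition~\ref{def:mixedspike} is designed to guarantee.

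First I would realize the parting as a finite sequence of spike moves. Viewing the boundary points of the mixed plat $I_g \cup \overline{B}$ in the discs $D_g \times \{0\}$ and $D_g \times \{1\}$, each such point can be slid along a path in the punctured disc $D_g$ avoiding the puncture points, namely the intersections of the fixed strands $I_g$ with the boundary disc. A single spike move realizes an elementary slide of this kind: by the conditions $[b'_i, b'_{i+1}, b'_{i+2}] \cap (I_g \cup \overline{B}) = \emptyset$ and $[b_i, b'_i, b'_{i+2}, b_{i+2}] \cap I_g = \emptyset$, the swept region meets no fixed strand and the over/under relation of the spike with every fixed strand agrees with that of the original corner, so both the isotopy class in $H_g$ and the plat form are preserved. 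Composing such moves along a chosen path, I would push every boundary point into a fixed disc on the right-hand side of $D_g$; the homotopy class of the path around the punctures records precisely the loopings around the fixed strands, which materialize as the $\alpha_i^{\pm 1}$ factors of the resulting algebraic mixed braid.

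Next I would verify that the outcome is genuinely an algebraic mixed braid with an even number of moving strands. Once all boundary points lie in the right-hand disc, every cap and cup, together with the ends of the moving strands, lies to the right of $I_g$, so the first $g$ strands form the identity braid $I_g$ with all moving endpoints to its right; that is, the braid is algebraic, and by Remark~\ref{labels_plat} no labels are then needed and its plat closure is determined uniquely. Since the closing arcs keep joining adjacent endpoints throughout, the number of moving strands remains even, and the plat closure of the resulting algebraic mixed braid $I_g \cup B' \in \mathcal{B}_{g,2n'}$ (for some $n' \geq n$) is isotopic, in $H_g$, to $I_g \cup \overline{B}$, and hence to $I_g \cup L$. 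The equivalence of the two formulations, a braid in $H_g$ versus a mixed braid, then follows from the correspondence of Remark~\ref{rmk:braid_surface_disk}.

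The hard part will be organizing the slides so that the two endpoints of a single closing arc, which in general must wind around different fixed strands, are brought to the right-hand disc without the spike triangles or the swept regions ever meeting a fixed strand or another portion of the plat. This forces a careful ordering of the slides, for instance treating the punctures from the innermost outward, and occasionally subdividing an edge to introduce a new pair of turning points, which is the mechanism by which $2n'$ may exceed $2n$ while remaining even. Showing that this procedure can always be arranged so that the hypotheses of Definition~\ref{def:mixedspike} are met at every step, and that it terminates, is the main obstacle of the proof.
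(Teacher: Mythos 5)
Your proposal follows essentially the same route as the paper: start from the geometric mixed plat produced by Theorem~\ref{teo_braiding_plat_hand} and convert it to an algebraic one by adapting the parting technique of \cite{haring2002knot}, realized through the spike moves of Definition~\ref{def:mixedspike}, after which Remark~\ref{labels_plat} makes the closing labels irrelevant. This is exactly the paper's ``plat parting''.

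The one place where you diverge is your final paragraph, and the ``main obstacle'' you leave open there is not actually an obstacle. A spike move, by definition, displaces the boundary point $b_{i+1}$ \emph{together with} its two incident boundary edges; in the plat picture this means a cap (or cup) is carried as a single unit, with its two strand-ends travelling along the same path in $D_g$. So the two endpoints of a single closing arc can never ``wind around different fixed strands'' and need never be reconciled: each boundary edge is pulled to the right of all strands of $I_g$ as a whole, passing entirely over or entirely under whatever it crosses according to the label `o' or `u' of that closing arc --- this is precisely the paper's observation that a spike move respects locally the label of a closing arc, and it is how parting works for the standard closure in \cite{haring2002knot}. Consequently no innermost-outward ordering of the punctures and no subdivision of edges introducing new turning points is required: the number of moving strands stays equal to $2n$ (the paper lands in $\mathcal{B}_{g,2n}$ with the \emph{same} $n$, not some $n' \geq n$), and termination is immediate, since there are finitely many boundary edges, each parted by a finite sequence of spike moves placed in general position. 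With that observation your argument closes and coincides with the paper's proof.
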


\begin{proof}
By Theorem \ref{teo_braiding_plat_hand}, we first obtain from \(I_g \cup L\) an isotopic geometric mixed plat \(I_g \cup \overline{B}\), where \(I_g \cup  B  \in \mathcal{B}_{g+2n}\) for some $n$, such that $B$ comes with a  set of $2n$ closing labels. Note that \( I_g \cup  B \) does not necessarily belong to the mixed braid group  \(\mathcal{B}_{g,2n}\). For doing this we adapt the parting technique, as in \cite{haring2002knot} for the standard closure. 

Indeed, we perform spike moves to all boundary edges which are on the left of some strands of \(I_g\), so that we part the mixed plat, bringing all the boundary edges to the right of all strands of \(I_g\) (see an example in Fig.~\ref{fig:spike_parting}). We shall refer to this procedure as a {\it plat parting} of the geometric braid. It is important to note that a spike move, by definition, respects locally the label of a closing arc. Otherwise, the closing labels do not restrict the spike moves. In the end of the parting operation we obtain an algebraic mixed plat, \(I_g \cup \overline{B^\prime}\), such that \(I_g \cup B^\prime \in \mathcal{B}_{g,2n}\).  At this moment, the \(2n\) closing labels do not matter anymore, as no fixed strands are in between moving strands, recall Remark~\ref{labels_plat}.

Equivalently, on the level of $H_g$, using Theorem \ref{teo_braiding_plat_hand} we first obtain from  $L$ a plat in $H_g$ and then, using appropriate spike moves, we bring the  resulting plat to a new one whose closing arcs lie within a disc on the right-hand side of $D_g$.
\end{proof}

\begin{figure}[H]
    \centering
    \includegraphics[width = \textwidth]{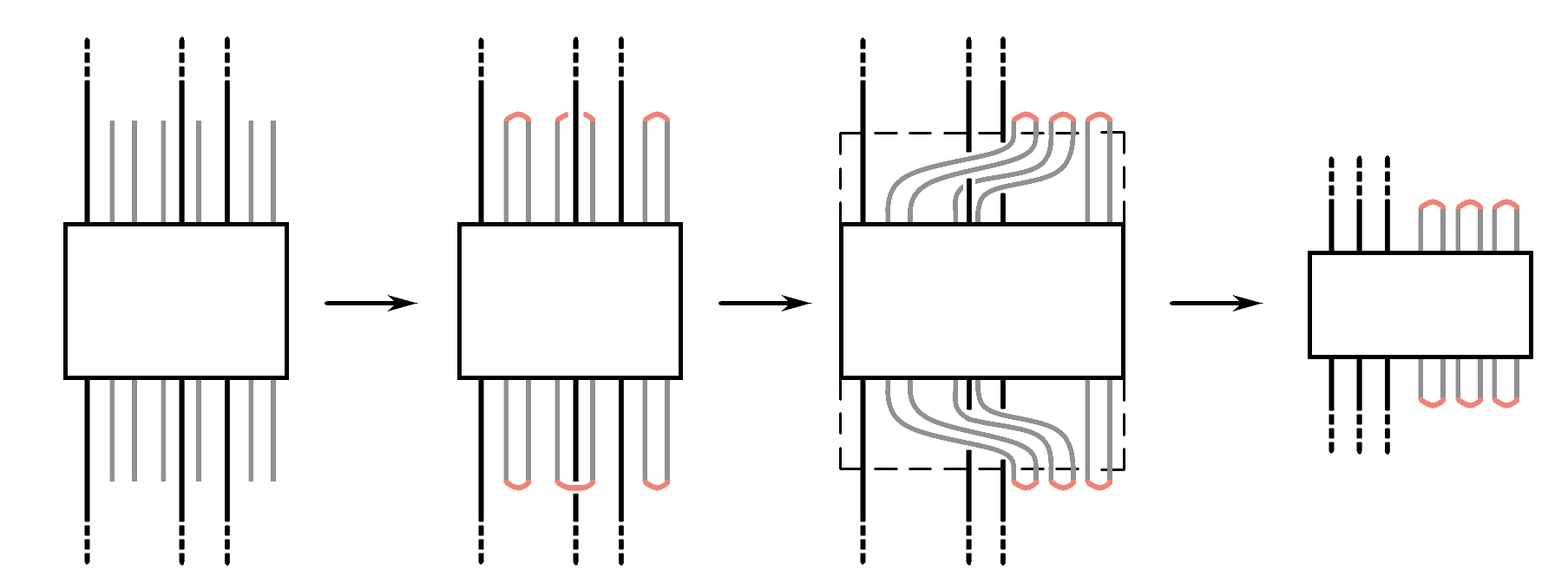}
    \caption{Exemplifying the process of parting a geometric mixed plat to an algebraic  mixed plat.}
    \label{fig:spike_parting}
\end{figure}

\section{The mixed Hilden braid group}\label{Sec:Mixed_hilden_group}

As in the classical case of $\mathbb{R}^3$, we want to define the Hilden braid group also in the setting of a handlebody. We will do that adapting Definition~\ref{def:Hilden_group} to the context of the surface braid group of $D_g$ (recall Remark~\ref{rmk:braid_surface_disk}) resp. the mixed braid group.

\begin{definition}  \rm 
 Let \( \mathcal{C} = \{C_1,\ldots,C_{2m}\} \) be a set of $2m$ points in $D_g$, which w.l.o.g. lie in a disc $A$ on the right-hand side of $D_g$. The {\it Hilden group of the handlebody $H_g$}, denoted \(K_{D_g,2m}\), is defined as the subgroup of $\mathcal{B}_{D_g,2m}$ generated by the equivalence classes of homeomorphisms of \(D_g \times I\)  leaving the set of closure arcs invariant on its top/bottom boundary.   

Furthermore, the {\it mixed Hilden group} \(K_{g,2m}\) is defined as the subgroup of the mixed braid group \(\mathcal{B}_{g,2m}\) generated by the equivalence classes of homeomorphisms of \(\mathbb{R}^3_+\)  leaving the set of bottom (resp. top) plat closure arcs invariant on its boundary. 
\end{definition}

\begin{remark}  \rm 
Clearly, an isomorphism between  $\mathcal{B}_{D_g,2m}$ and \(\mathcal{B}_{g,2m}\) induces an isomorphism between \(K_{D_g,2m}\) and \(K_{g,2m}\).  Hence, and following the idea of Remark \ref{rmk:braid_surface_disk}, the mixed Hilden braid group can be viewed either as the Hilden subgroup of the mixed braid group of the handlebody or the Hilden subgroup of the surface braid group of \(D_g\).

\end{remark} 

\begin{theorem}\label{theo:hilden_mixed}
A set of generators for \(K_{g,2m}\) is \[\{\sigma_{1}, \  \lambda_1 = \sigma_{2} \sigma_1^2 \sigma_{2}, \  \mu_i = \sigma_{2i} \sigma_{2i-1} \sigma_{2i+1} \sigma_{2i}, \ 1\leq i \leq m-1; \  \tau_{i,2j-1}, \ 1\leq i\leq g, \, 1\leq j\leq m\},\] where: 
\begin{align*}
\tau_{i,1} &= \alpha_i \, \sigma_{1} \alpha_i \sigma_{1} \\
\tau_{i,2j-1} &= \sigma_{2j-2} \sigma_{2j-3} \sigma_{2j-1} \sigma_{2j-2} \dots \sigma_{2} \sigma_{1} \sigma_{3} \sigma_{2} \alpha_i \sigma_{1} \, \alpha_i \, \sigma_{1}  \sigma_{2}^{-1} \sigma_{3}^{-1} \sigma_{1}^{-1} \sigma_{2}^{-1} \dots \sigma_{2j-2}^{-1} \sigma_{2j-1}^{-1} \sigma_{2j-3}^{-1} \sigma_{2j-2}^{-1}
\end{align*}
The generators \(\sigma_{1}, \lambda_1, \mu_i, \tau_{i,2j-1} \)  shall be referred to as `mixed Hilden generators'.
\end{theorem}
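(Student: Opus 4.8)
The plan is to prove the two directions separately: first that each listed element genuinely lies in $K_{g,2m}$, and then that these elements suffice to generate the whole group. Throughout I will use the isomorphism $\mathcal{B}_{D_g,2m}\cong\mathcal{B}_{g,2m}$ of Remark~\ref{rmk:braid_surface_disk} and the classical generating set of Proposition~\ref{prop:hilden_generators}.

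First I would verify the easy inclusion. The elements $\sigma_1,\lambda_1,\mu_i$ are exactly the minimal Hilden generators of Proposition~\ref{prop:hilden_generators}, realised by homeomorphisms of $\mathbb{R}^3_+$ supported away from the fixed strands $I_g$; hence they preserve the bottom closure arcs and lie in $K_{g,2m}$. For $\tau_{i,1}=\alpha_i\sigma_1\alpha_i\sigma_1$ I would argue geometrically: this word drags the cap joining the first two bottom endpoints once around the $i$-th fixed strand, carrying both feet of the cap coherently. Since a closure cap bounds a disk in $\mathbb{R}^3_+$ and dragging it around a fixed strand carries that disk to another such disk, the set of closure arcs remains invariant, so $\tau_{i,1}\in K_{g,2m}$; the balanced, cap-preserving nature of the move is reflected algebraically by the defining relation $\alpha_i\sigma_1\alpha_i\sigma_1=\sigma_1\alpha_i\sigma_1\alpha_i$ of $\mathcal{B}_{g,2m}$. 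Finally, writing $\tau_{i,2j-1}=W_j\,\tau_{i,1}\,W_j^{-1}$, where $W_j=\sigma_{2j-2}\sigma_{2j-3}\sigma_{2j-1}\sigma_{2j-2}\cdots\sigma_2\sigma_1\sigma_3\sigma_2$ is the displayed positive word transporting the first cap to the $j$-th cap position, the same picture shows $\tau_{i,2j-1}\in K_{g,2m}$.

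For the generation direction I would use the forgetful homomorphism $p\colon\mathcal{B}_{g,2m}\to\mathcal{B}_{2m}$ obtained by filling in the $g$ punctures of $D_g$ (equivalently, deleting the fixed strands), under which $\sigma_k\mapsto\sigma_k$ and $\alpha_i\mapsto 1$. The classical Hilden subgroup sits inside $K_{g,2m}$, realised by homeomorphisms supported away from $I_g$, and $p$ restricts to the identity on it, while $p$ sends every mixed Hilden element to a classical one because filling punctures preserves the cap-spanning condition; hence $p|_{K_{g,2m}}\colon K_{g,2m}\twoheadrightarrow K_{2m}$ is a surjection, with $p(\tau_{i,2j-1})=\sigma_{2j-1}^{2}\in K_{2m}$. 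Given $h\in K_{g,2m}$, I would first express $p(h)\in K_{2m}$ as a word in $\sigma_1,\lambda_1,\mu_i$ via Proposition~\ref{prop:hilden_generators}, lift that word into $K_{g,2m}$ using the same named generators, and multiply to reduce to the case $h\in\ker p\cap K_{g,2m}$. It then remains to show that every Hilden element of the kernel is a product of the $\tau_{i,2j-1}$ together with the classical generators, the latter serving to reposition caps.

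The hard part will be this last step: analysing $\ker p\cap K_{g,2m}$. The kernel $\ker p$ is the group of \emph{pure windings} --- mixed braids whose moving part is trivial but whose strands may wind around the fixed strands --- and is generated by the $\mathcal{B}_{2m}$-conjugates of the $\alpha_i$, that is, by the elementary windings of each moving strand about each fixed strand. A single winding $\alpha_i$ is \emph{not} a Hilden element, since threading only one foot of a cap through a handle destroys its spanning disk; the Hilden condition forces the windings at the two feet of each cap to occur in balanced, cap-coherent pairs. The technical work is to make this pairing precise: I would fix a generating set of $\ker p$ adapted to the cap decomposition (one winding per moving strand per fixed strand), describe membership in $K_{g,2m}$ as the requirement that the net winding be balanced cap by cap, and then show that the resulting subgroup is generated by the $\tau_{i,2j-1}$ after conjugating by $\sigma_1,\lambda_1,\mu_i$ to bring any cap into the first position. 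Combining the three steps yields the stated generating set for $K_{g,2m}$.
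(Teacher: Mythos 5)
Your first two steps are sound, and they genuinely differ from the paper's route: you reduce along the puncture-filling homomorphism $p\colon\mathcal{B}_{g,2m}\to\mathcal{B}_{2m}$, $\sigma_k\mapsto\sigma_k$, $\alpha_i\mapsto 1$, using that $\sigma_1,\lambda_1,\mu_i$ lie in $K_{g,2m}$ and lift the minimal classical generators, so that the whole theorem is concentrated in the subgroup $\ker p\cap K_{g,2m}$. The paper never does this; it instead identifies $K_{g,2m}$ with the subgroup of the classical Hilden group $K_{g+2m}$ fixing the first $g$ strands (inserting a dummy strand when $g$ is odd), and then specializes Hilden's \emph{full} generating set from Proposition~\ref{prop:hilden_generators}: generators moving fixed strands are discarded, the $\rho_{ij}$ with one index in the fixed block become the $\tau_{i,2j-1}$, the corresponding $\omega_{ij}$ are rewritten as $\tau_{i,2j-1}\tau_{i+1,2j-1}$, and Birman's relations reduce the rest to $\sigma_1,\lambda_1,\mu_i$. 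In other words, the paper lets Hilden's known generating set do the work that your argument must still supply by hand.

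That is where your proposal has a genuine gap: your third step is a plan rather than a proof, and its central claim is false as stated. You propose to characterize membership of a pure winding in $K_{g,2m}$ by ``the net winding being balanced cap by cap.'' Balance is necessary but not sufficient. Take $g\ge 2$, any $m\ge 1$, and $w=[\alpha_1,\alpha_2]=\alpha_1\alpha_2\alpha_1^{-1}\alpha_2^{-1}$. Then $w\in\ker p$, and every moving strand has zero net winding about every fixed strand, so $w$ is balanced in any homological sense; yet $w\notin K_{g,2m}$: capping the bottom of $w$ drags the first closure arc along the commutator loop, producing an arc whose class rel endpoints in $\pi_1(\mathbb{R}^3_+\setminus I_g)$ (a free group on generators $x_1,\dots,x_g$) differs from that of the original arc by $[x_1,x_2]\neq 1$, so the dragged arc is not even homotopic rel endpoints to the original one, let alone isotopic. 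The correct membership condition is at least homotopical --- the tracks of the two feet of each cap must be homotopic loops in $D_g$ --- and even with that correction you would still owe two substantial arguments: that $\ker p$ is generated by the $2mg$ elementary windings (you assert this; it requires a combing or Reidemeister--Schreier argument), and that the homotopically balanced subgroup is generated by the $\tau_{i,2j-1}$ together with $\sigma_1,\lambda_1,\mu_i$, which is essentially the theorem itself. As it stands, the heart of the proof is missing, and the one precise criterion offered in its place is contradicted by $[\alpha_1,\alpha_2]$.
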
 
\noindent View Fig.~\ref{fig:gen_hilden_mixed} for the generators $\sigma_{1}, \lambda_1$ and $\mu_i$ and Fig.~\ref{fig:generator_extra_hilden} for the generators $\tau_{i,2j-1}$.

\begin{figure}[H]
    \centering
    \includegraphics[width = .85\textwidth]{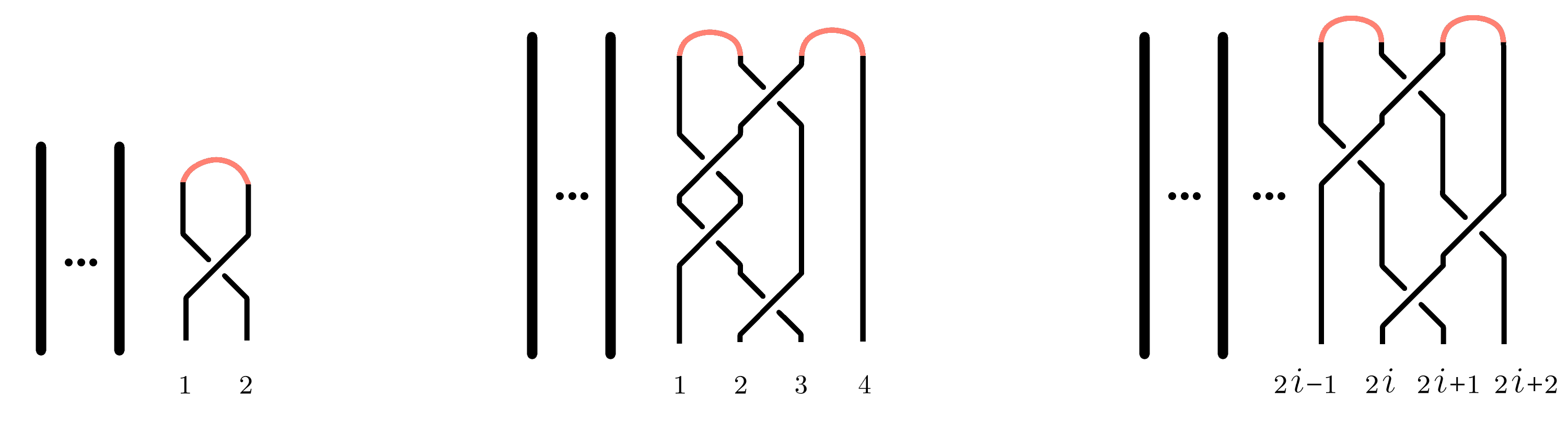}
    \caption{The mixed Hilden generators \(\sigma_1, \lambda_1\) and \(\mu_i\).}
    \label{fig:gen_hilden_mixed}
\end{figure}

\begin{figure}[H]
    \centering
    \includegraphics[width = .55\textwidth]{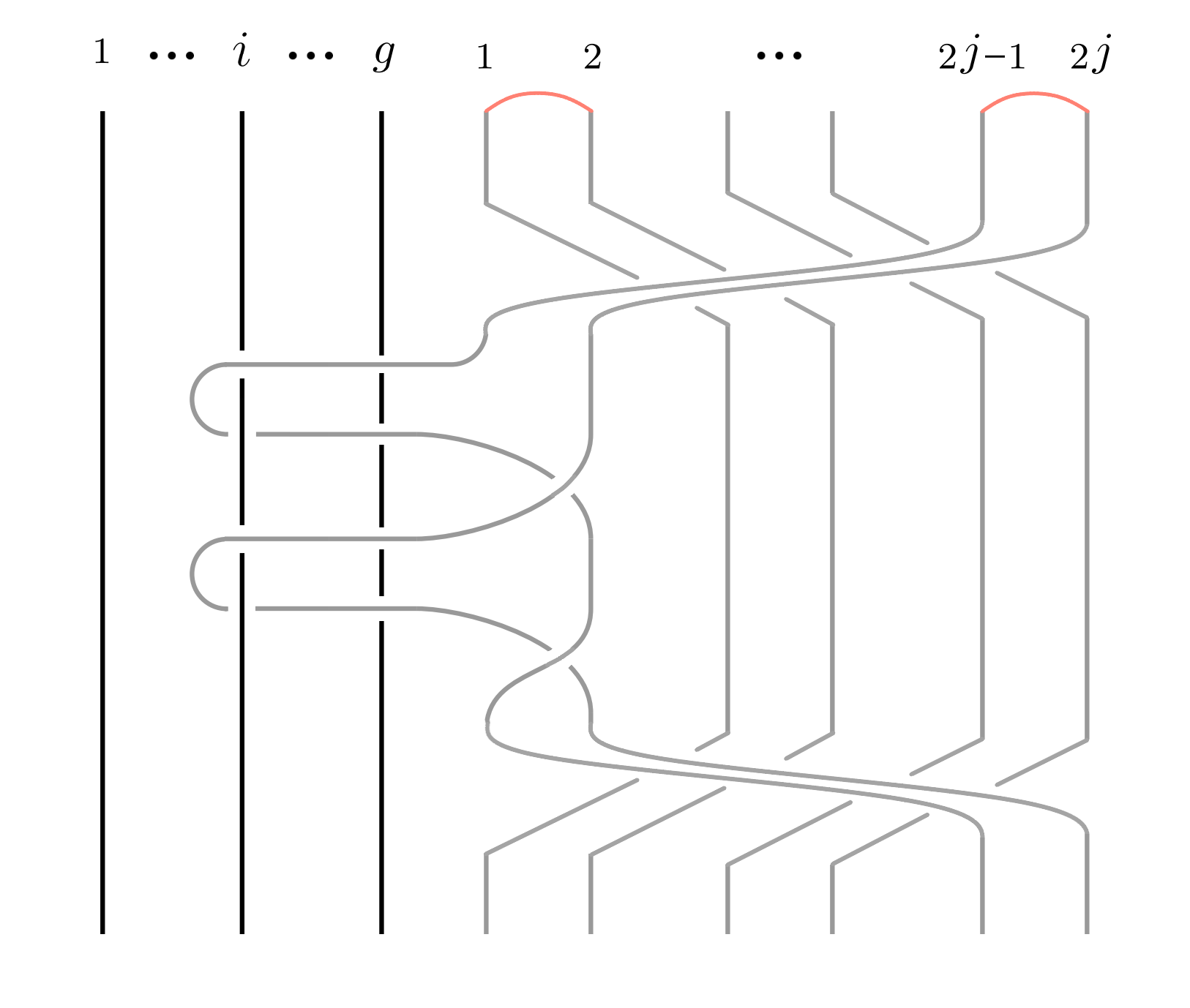}
    \caption{The mixed Hilden generator \(\tau_{i,2j-1}\).}
    \label{fig:generator_extra_hilden}
\end{figure}

\begin{proof}
Since \(\mathcal{B}_{g,2m}\) is a subgroup of \(\mathcal{B}_{g+2m}\),  the idea is to make use of Proposition~\ref{prop:hilden_generators}. For this, we need to also argue that \(K_{g,2m}\) is a subgroup of \(K_{g+2m}\). If {\it \(g\) is even} then \(K_{g+2m}\) is well-defined and, hence, \(K_{g,2m}\) coincides with the subgroup of \(K_{g+2m}\) keeping fixed the first \(g\) strands. In the case of {\it \(g\) odd} this does not make sense, because the mixed Hilden subgroup is defined only for an even number of moving strands. For this reason we will introduce in this case a \emph{dummy strand} with number zero on the left of the first  strand of any element of \(K_{g+2m}\) as well as on the left of the first fixed strand of any element of  \(K_{g,2m}\). The dummy strand does not interact with any other strand of the braid. In this way, every braid will have \( (1+g+2m)\) strands, and we can define then the dummied \(K^\prime_{g,2m}\) to be the Hilden subgroup of the dummied \(K^\prime_{g+2m}\), for \(g\) odd. With the above trick, we can  always assume that \(K_{g+2m}\) is well-defined and \(K_{g,2m}\) is a subgroup of \(K_{g+2m}\).

In using Proposition~\ref{prop:hilden_generators}, we will start by examining the full set of generators for the group \(K_{g+2m}\). 
Then, the generators of \(K_{g,2m}\) will be the ones inherited by \(K_{g+2m}\), but with different enumeration and some small adjustments for both cases for \(g\), even or odd. First of all, since the first \(g\) strands of an element of \(K_{g,2m}\) are fixed, the  \(\sigma_i\) generators involving the first $g$ strands need to be eliminated.  Likewise
for the generators \(\mu_i\) with indices smaller than \(g\), since the first $g$ strands do not participate in the plat closure. Second, we can divide \(\rho_{ij}\) and \(\omega_{ij}\) generators into three families depending on their indices: 
\begin{itemize}
    \item \(2i,2j \leq g\): these generators need to be eliminated, since they move fixed strands; 
    \item \(2i\leq g, \ 2j>g\): these generators still exist in \(K_{g,2m}\), but we need to write them using generators of the mixed braid group \(B_{g,2m}\) and renumber the indices; 
    \item \(2i,2j >g\): these generators remain, but a renumbering of the indices is needed.
\end{itemize}

\begin{figure}[H]
    \centering
    \includegraphics[width = \textwidth]{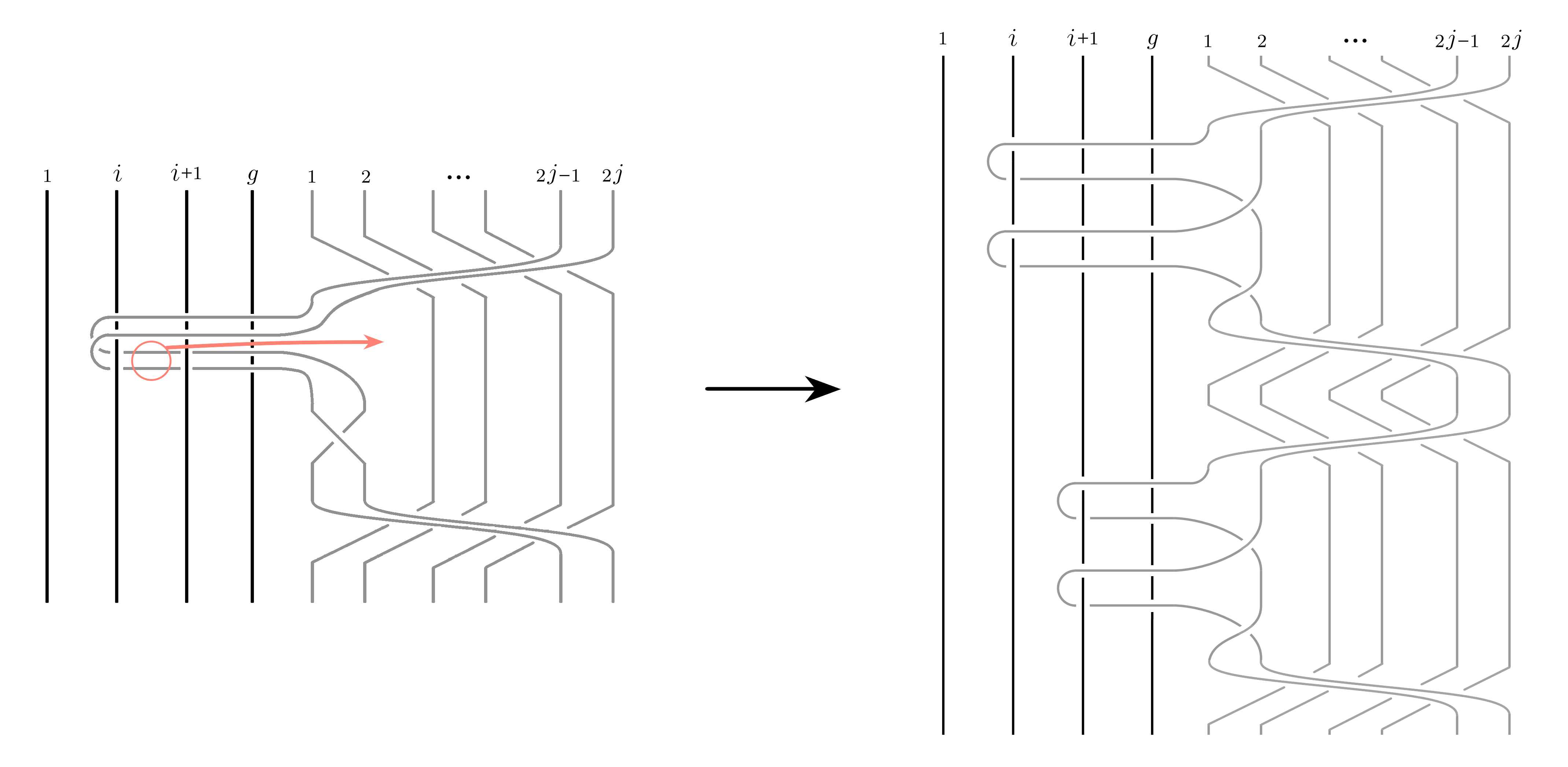}
    \caption{Finding the expression for \(\omega_{ij}\) as a product of \(\tau\) generators.}
    \label{fig:generator_extra_hilden_2}
\end{figure}

Now that the set of generators is found, we need to renumber their indices  keeping in mind that the \((i+g)\)-th strand in \(\mathcal{B}_{g+2m}\) is numbered \(i\) in \(\mathcal{B}_{g,2m}\). We rename the \(\rho_{ij}\)  generators of the second family to  \(\tau_{i,2j-1}\:\) for \( i = 1, \dots, g,\: j = 1, \dots, m\), that takes the strands numbered \(2j-1, 2j\) around the \(i\)-th fixed strand. Then an \(\omega_{ij}\) generator of the second family can be expressed using the \(\tau_{k,l}\)-type generators. Indeed, Figure \ref{fig:generator_extra_hilden_2} illustrates the expression of the  \(\omega_{ij}\) generator as the product   \(\tau_{i,2j-1} \cdot \tau_{i+1, 2j-1}\). 

Note that, in the case of \(g\) odd, we will have also the definition of \(\tau_{0,2j-1}, j = 1, \dots, m\). In this way we found a way to represent generators \(\rho_{ij}\) and \(\omega_{ij}\) using the generators of \(\mathcal{B}_{g,2m}\). 
 At this point, if \(g\) is even, the construction is completed. If \(g\) is odd, we have to remove the dummy strand, numbered zero. To perform this transformation we follow the natural inclusion \(K_{g+1,2m} \rightarrow K_{g,2m}\), where the leftmost strand is forgotten. This means that the generators \(\tau_{0,2j-1}\) are now trivial elements of the group. 

Last but not least, using the relations found by Birman in \cite{birman1976stable}, which led to a minimal set of generators, we can introduce the generator \(\lambda_1\) and omit from the set of generators the third family of \(\rho_{ij}\)'s and \(\omega_{ij}\)'s, the ones with \(i,j >g\), since they can be represented using the other generators, as in the classical case. 
\end{proof}

\begin{remark}\rm
It is important to note that not all \(\tau_{i, 2j-1}\) generators are needed: in fact, for \(j \neq 1\) it is possible to obtain \(\tau_{i,2j-1}\) by composing \(\mu_k\) generators with \(\tau_{i,1}\). 
\end{remark}


\section{The equivalence between plat closed mixed braids}\label{Equivalence}

Before going on, let us recall from  \cite{birman1976stable} the definition of stabilization move for plat closed braids, adapted to the case of a link in the handlebody $H_g$, or equivalently, a mixed link. 

\begin{definition} \label{def:mixedstabilization} \rm 
Let \(a, b, c\) be consecutive vertices of our link in $H_g$ (resp. a mixed link $I_g\cup L$) in polygonal form. Let \(b'\) (resp. \(b''\)) be the projections of \(b\) onto \(D_g \times \{0\}\), \(D_g \times \{1\}\) (resp.  on the top and bottom lines containing the endpoints), and \(b_1, b_2\) the intersection points of a sufficiently small neighbourhood of \(b\) with the segments \([a, b]\) and \( [b,c]\). The \textit{stabilization move}  is the substitution of \([a, b]\) and \( [b,c]\) with the segments \([a, b_1], [b_1, b'], [b', b''], [b'', b_2]\) and \( [b_2, c]\). View Fig.~\ref{fig:stabilization}.  
\end{definition}
By definition, a stabilization move is an isotopy. 
\begin{figure}[H]
    \centering
    \includegraphics[width = .7\textwidth]{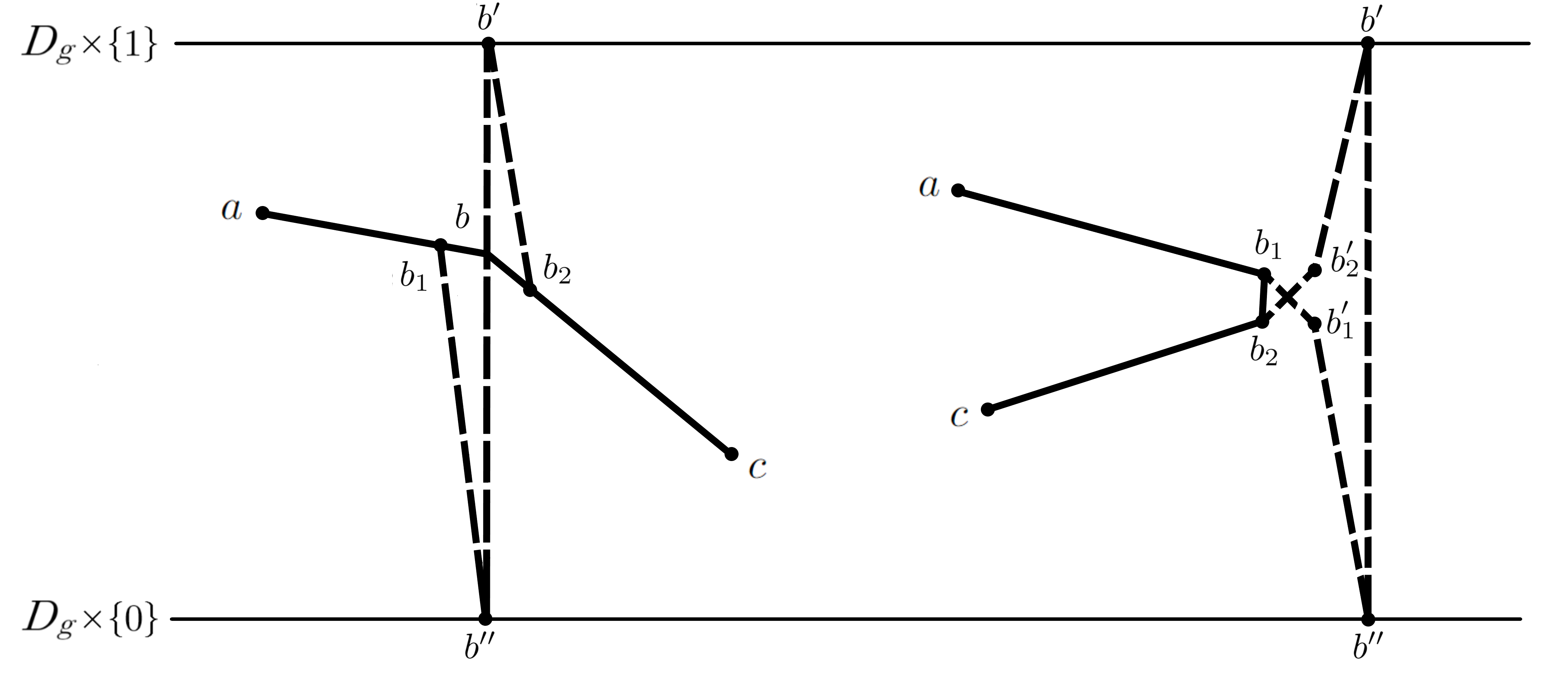}
    \caption{The two forms of the stabilization move for $H_g$.}
    \label{fig:stabilization}
\end{figure} 

Let us now consider two plat closed mixed braids \(B, B' \in \mathcal{B}_{g, 2n}\) such that one can be obtained by the other by only applying elements of the mixed Hilden group \(K_{g,2n}\). Then, since all the generators of \(K_{g,2n}\) represent local isotopies for a plat closed mixed braid, the two braids represent the same link isotopy class via plat closure. To formulate and prove the converse statement, a previous result of Birman  helps us: 

\begin{lemma}(\cite[Lemma 8]{birman1976stable}) \label{Birman_lemma}
Let \(L\) and \(L'\) be two isotopic polygonal links in general position, i.e. with no horizontal edges and such that each vertex has nothing above or underneath it in a small neighbourhood of it. Then it is possible to connect the two links by a finite sequence of spike moves and stabilization moves. 
\end{lemma}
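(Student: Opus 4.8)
The plan is to reduce the ambient isotopy between $L$ and $L'$ to a finite list of elementary triangle moves and then to realise each of these, after subdivision, by spike moves (Definition~\ref{def:mixedspike}) and stabilization moves (Definition~\ref{def:mixedstabilization}); this is in essence Birman's argument, with the extra bookkeeping needed to keep everything disjoint from the fixed part $I_g$. First I would invoke the standard fact that a PL isotopy between two polygonal links is a finite composition of $\Delta$-moves, each replacing an edge $[a,b]$ by $[a,v]\cup[v,b]$ (or the reverse) across a triangle $T=[a,b,v]$ that meets the rest of the diagram only along $[a,b]$. It therefore suffices to connect two general-position diagrams that differ by a single $\Delta$-move.

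The second step is to make the triangle small. Passing to a fine subdivision of $T$, the $\Delta$-move across $T$ factors as a composition of $\Delta$-moves across the sub-triangles, and I would choose the subdivision so that each sub-triangle (i) meets the link in a single edge, (ii) is disjoint from $I_g$, and (iii) lies in $D_g\times[0,1]$ either inside a slab between two consecutive critical levels of the height function $h$ given by the $\overrightarrow{D_g}$-coordinate, or straddling exactly one local extremum of $h$ on the moving edge. Conditions (ii) and (iii) are secured by a general position argument, using that $I_g$ is a fixed collection of monotone vertical strands off which any small triangle can be pushed, and that horizontal edges and non-generic vertices can be avoided throughout.

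The heart of the matter is the case analysis by the effect on the critical points of $h$. If a sub-triangle is \emph{level-preserving}, i.e.\ it creates and destroys no local maximum or minimum, then the move is an isotopy keeping every moving strand monotone between the same two levels; I would realise it by sliding the boundary vertices together with the incident monotone arcs, that is, by a finite sequence of spike moves, where the over/under choices built into Definition~\ref{def:mixedspike} record how the moving arcs cross the fixed strands of $I_g$. If instead the sub-triangle straddles a single extremum, then up to level-preserving moves it either pushes a new local maximum and minimum out to the top and bottom boundary or retracts such a spike, which by Definition~\ref{def:mixedstabilization} is precisely a stabilization move or its inverse. Composing these realisations over all sub-triangles and over all $\Delta$-moves of the isotopy yields the required finite sequence.

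I expect the main obstacle to be the level-preserving case combined with disjointness from $I_g$: one must verify that an arbitrary height-preserving elementary move is genuinely a product of spike moves, and, crucially in the handlebody, that throughout this realisation each moving arc can be kept consistently over or under every fixed strand, so that the labels match and no spurious linking with $I_g$ is created. Controlling this while simultaneously keeping all intermediate diagrams in general position is the delicate bookkeeping the full proof must carry out.
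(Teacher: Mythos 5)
Your proposal takes essentially the same route as the paper: the paper proves this lemma by citing Birman's Lemma 8, whose argument is exactly the reduction of the PL isotopy to \(\Delta\)-moves followed by a case analysis of each (subdivided) triangle against the height function, with level-preserving moves realized by spike moves and the creation/destruction of a max--min pair realized by stabilizations. The extra bookkeeping you flag for the handlebody is precisely the paper's own added remark, namely that the spike and stabilization moves, like the \(\Delta\)-moves, miss the vertical lines of the punctures (your disjointness from \(I_g\)), so nothing further is needed.
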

The lemma is proved in the case of \(\mathbb{R}^3\), by examining the possible $\Delta$-moves. The only difference when adapting to $H_g$ is that the $\Delta$-moves miss the vertical lines of the punctures. But this fact does not present any obstacle, since the spike moves and the stabilization moves are the same moves as in \(\mathbb{R}^3\) and they also  miss the vertical lines (the same arguments carry through also to the setting of  mixed links). The spike move for the case of $H_g$ is defined in Definition~\ref{def:mixedspike}, while the stabilization move is defined just above, in Definition~\ref{def:mixedstabilization}. Namely we have: 

\begin{lemma} \label{mixed_Birman_lemma}
Let \(L\) and \(L'\) be two isotopic polygonal links in the handlebody $H_g$, resp. mixed links, in general position, i.e. with no horizontal edges and such that each vertex has nothing above or underneath it in a small neighbourhood of it. Then it is possible to connect the two links by a finite sequence of spike moves and stabilization moves. 
\end{lemma}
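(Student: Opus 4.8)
The plan is to reduce everything to Birman's Lemma~\ref{Birman_lemma} together with the observation that all the moves involved can be kept disjoint from the fixed strands $I_g$. First I would recall that an isotopy between two polygonal links in $H_g$ (equivalently, between the mixed links $I_g\cup L$ and $I_g\cup L'$ with $I_g$ held fixed pointwise) is realized by a finite sequence of elementary $\Delta$-moves, each replacing one edge of the moving part by the other two edges of a triangle whose interior meets neither the rest of the link nor any strand of $I_g$. The only feature separating the handlebody case from the classical one is exactly this last constraint: the admissible triangles must avoid the vertical lines of the punctures.

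Next I would argue that it suffices to treat a single $\Delta$-move, and to exhibit, before and after it, a finite sequence of spike moves (Definition~\ref{def:mixedspike}) and stabilization moves (Definition~\ref{def:mixedstabilization}) connecting the two plats. For this I would import Birman's case analysis from \cite{birman1976stable}: according to the heights of the three vertices of the triangle and to whether local maxima or minima are created or destroyed, each admissible $\Delta$-move is decomposed into a bounded number of spike and stabilization moves, all supported in an arbitrarily small regular neighbourhood of the triangle. This combinatorial decomposition does not refer to the ambient manifold other than through the height function $D_g\times[0,1]\to[0,1]$, so it is identical to the one in $\mathbb{R}^3$.

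The one genuinely new point is to check that the auxiliary moves inherit disjointness from $I_g$, and this is where I expect the only real care to be needed. The key principle is that Birman's spike and stabilization moves live in an arbitrarily small neighbourhood of a triangle already disjoint from the fixed strands; shrinking this neighbourhood keeps every intermediate move disjoint from $I_g$. Concretely, the vertical segment $[b',b'']$ produced by a stabilization move lies over the $(x,y)$-position of $b$, which we may assume by general position is not the location of any puncture, so the segment meets no fixed strand; and the spike-move conditions of Definition~\ref{def:mixedspike} explicitly impose disjointness of the relevant triangle and region from $I_g$. Hence each intermediate move is a legitimate spike or stabilization move for the mixed plat, the fixed strands present no obstruction, and the $\mathbb{R}^3$ argument carries over. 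The essential work, therefore, is not new geometry but a transparent organization of Birman's case analysis that makes the neighbourhood support and the avoidance of $I_g$ simultaneously visible.
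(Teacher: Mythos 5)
Your proposal is correct and takes essentially the same route as the paper: the paper's own justification likewise reduces to Birman's $\Delta$-move analysis in $\mathbb{R}^3$ and observes that, since admissible $\Delta$-moves in $H_g$ miss the vertical lines of the punctures and the spike and stabilization moves of Definitions~\ref{def:mixedspike} and~\ref{def:mixedstabilization} also miss them, the classical argument carries over (for mixed links as well). Your concrete disjointness checks---that the vertical segment $[b',b'']$ of a stabilization lies over a non-puncture point of $D_g$ and that the spike-move conditions already encode avoidance of $I_g$---are simply a more explicit rendering of the paper's brief remark.
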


Before formulating and proving the plat equivalence for the handlebody we will introduce a special case of a stabilization move, the $st_k$ move.

\begin{definition}\label{def:stk} \rm
An {\it $st_k$ move}  is a stabilization move   between two algebraic mixed braids \(B, B' \in \cup_{n\in \mathbb{N}} \mathcal{B}_{g,2n}\), defined as:
\[\begin{array}{cccl}
  st_k : & \mathcal{B}_{g,2n} & \rightarrow & \mathcal{B}_{g, 2n+2} \\
   & B & \mapsto & B \sigma_{2k}^{-1} \sigma_{2k+1}^{-1} \dots \sigma_{2n-1}^{-1} \sigma_{2n} \sigma_{2n-1} \dots \sigma_{2k+1} \sigma_{2k}
\end{array}\]

\noindent This move, depicted abstractly in Fig.~\ref{fig:st_k_move},  represents a special case of a stabilization move between algebraic mixed braids, taking place at the bottom of an even  numbered strand $2k$, with a positive crossing. 
\end{definition}

\begin{figure}
    \centering
    \includegraphics[width = .85\textwidth]{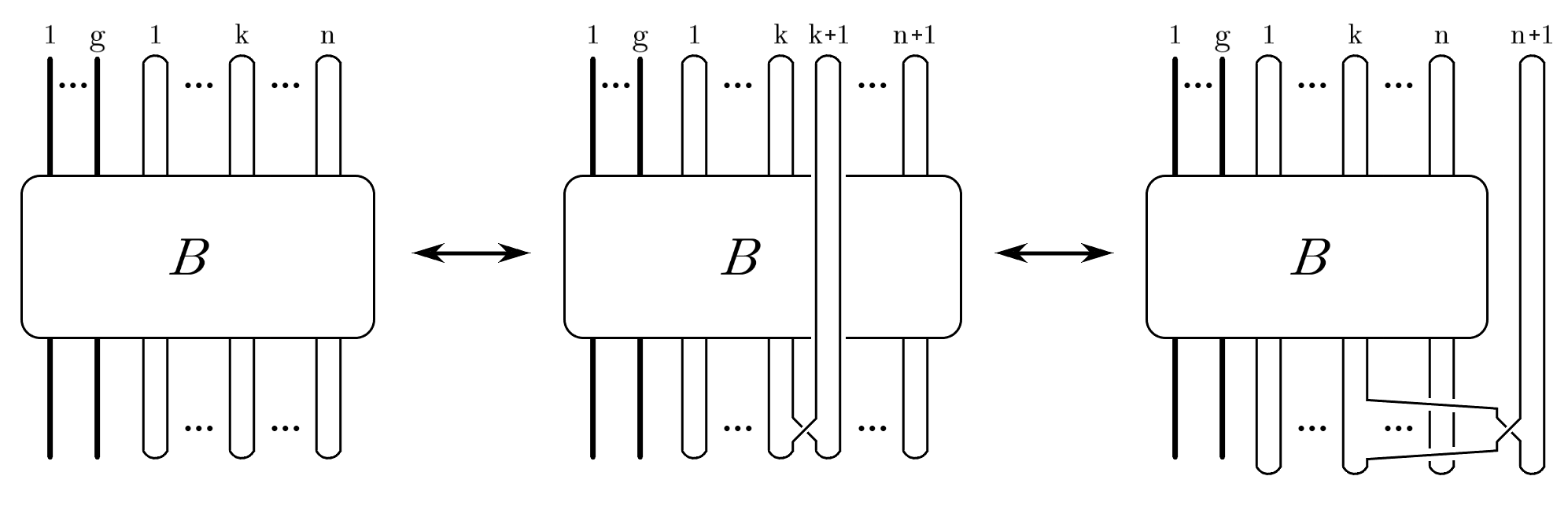}
    \caption{The \(st_k\) move.}
    \label{fig:st_k_move}
\end{figure}

\begin{theorem}\label{teo:main_equivalence}
Two braids \(B, B' \in \cup_{n\in \mathbb{N}} \mathcal{B}_{g,2n}\) determine, via plat closure, equivalent links in the handlebody \(H_g\) of genus $g$ if and only if they are connected by braid relations and a finite sequence of the following moves: 
\[\begin{array}{rrcccl}
h_{\sigma}:&        \sigma_1 B & \longleftrightarrow &B& \longleftrightarrow & B \sigma_1, \\[\smallskipamount]
h_{\lambda}:&       \sigma_2 \sigma_1^2 \sigma_2 B & \longleftrightarrow &B& \longleftrightarrow & B \sigma_2 \sigma_1^2 \sigma_2,  \\[\smallskipamount]
h_{\mu}:&          \sigma_{2i} \sigma_{2i-1} \sigma_{2i+1} \sigma_{2i} B & \longleftrightarrow &B& \longleftrightarrow & B \sigma_{2i} \sigma_{2i-1} \sigma_{2i+1} \sigma_{2i},  \ \  i = 1, \dots, n \\[\smallskipamount]
h_{\tau}:&          \alpha_j \sigma_{1} \alpha_j \sigma_{1} B & \longleftrightarrow &B& \longleftrightarrow & B \alpha_j \sigma_{1} \alpha_j \sigma_{1},  \ \  j = 1, \dots, g\\[\smallskipamount]
st_n:&              & &B& \longleftrightarrow & B \sigma_{2n} 
\end{array}
\]
\end{theorem}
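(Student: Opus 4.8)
The plan is to prove the two implications separately, using the geometric reduction of Lemma~\ref{mixed_Birman_lemma} for the non-trivial direction and a dictionary between geometric and algebraic moves.

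For the ``if'' direction I would argue geometrically. Each of $h_\sigma, h_\lambda, h_\mu, h_\tau$ is left- or right-multiplication of $B$ by one of the mixed Hilden generators of Theorem~\ref{theo:hilden_mixed} (the Remark following that theorem lets us discard the $\tau_{i,2j-1}$ with $j>1$, keeping only $\tau_{i,1}=\alpha_i\sigma_1\alpha_i\sigma_1$). By the very definition of $K_{g,2n}$, each such generator is the image of a homeomorphism of the relevant half-space that fixes the plat closure arcs setwise; multiplying $B$ by it on the side corresponding to the top or bottom arcs therefore realizes a local isotopy of the mixed plat $I_g\cup\overline{B}$ and leaves its link type in $H_g$ unchanged. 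The move $st_n$ is, by Definition~\ref{def:mixedstabilization}, a stabilization move, hence an isotopy. Braid relations do not alter $B$ as a group element, so any finite sequence of the listed moves preserves the isotopy class of the plat.

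For the ``only if'' direction, assume the plats of $B$ and $B'$ are isotopic in $H_g$. First I would place both underlying polygonal links in general position and apply Lemma~\ref{mixed_Birman_lemma}, obtaining a finite sequence of spike moves and stabilization moves connecting them while keeping the plat form. The core of the proof is to convert each such geometric move into the algebraic moves of the statement. A spike move is supported in a neighbourhood of a single boundary point of $D_g\times\{0,1\}$; by Definition~\ref{def:mixedspike} it preserves the number of strands together with the plat closure arcs, and it respects the over/under position of any fixed strand of $I_g$ straddled by the affected edges. Hence it is induced by a homeomorphism of the upper (resp. lower) half-space fixing the closure arcs, i.e. an element of $K_{g,2n}$, and on the braid word it acts by left- or right-multiplication by that element. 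Decomposing it on the minimal generating set of Theorem~\ref{theo:hilden_mixed} expresses every spike move as a finite product of $h_\sigma, h_\lambda, h_\mu, h_\tau$ moves; in particular the $\tau$-contribution appears exactly when a fixed strand separates the two endpoints being joined, which is what records the winding of a closing arc around a handle. Each stabilization move, read algebraically, is an $st_k$ move in the sense of Definition~\ref{def:stk}.

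It then remains to reduce a general $st_k$ to the single listed $st_n$. Writing the word added by $st_k$ as the conjugate $w\,\sigma_{2n}\,w^{-1}$ with $w=\sigma_{2k}^{-1}\sigma_{2k+1}^{-1}\cdots\sigma_{2n-1}^{-1}$, I would slide the newly created bridge from position $k$ to the last position using the product $\mu_k\mu_{k+1}\cdots\mu_{n-1}$ of $\mu$-generators, so that $st_k$ becomes an $st_n$ flanked by $h_\mu$ moves and braid relations. The hard part will be precisely this translation layer: verifying that every spike move is genuinely realized by a Hilden element on the correct side (including the handle-winding $\tau$-contribution), and that each $st_k$ collapses to $st_n$ modulo $h_\mu$. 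This is the handlebody analogue of the double-coset bookkeeping in Birman's Theorem~\ref{Thm:birman_original}, now complicated by having to track how the closing arcs wind around the fixed strands of $I_g$, which is exactly the information carried by the $\tau$-generators.
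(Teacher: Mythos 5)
Your skeleton is the same as the paper's: the easy direction is the observation that the listed moves are plat isotopies; the hard direction runs through Lemma~\ref{mixed_Birman_lemma}, translates spike moves into mixed Hilden moves and stabilization moves into $st_k$ moves, and then collapses $st_k$ to $st_n$ by sliding the new bridge to the far right with $\mu$-generators (this last reduction is exactly the paper's argument). The genuine gap is in your central claim that a spike move ``is induced by a homeomorphism of the upper (resp.\ lower) half-space fixing the closure arcs, i.e.\ an element of $K_{g,2n}$, and on the braid word it acts by left- or right-multiplication.'' This does not follow from Definition~\ref{def:mixedspike} as directly as you assert: a spike move displaces a boundary point of the plat to a \emph{new} point of $D_g\times\{0,1\}$, possibly on the other side of a fixed strand of $I_g$, so the result is in general a geometric mixed plat whose endpoints are no longer at the standard positions; it has no well-defined word in $\mathcal{B}_{g,2n}$ until you re-part it, i.e.\ choose a path in the configuration space of $D_g$ dragging the endpoints (together with their closure arcs) back into the disc $A$. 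Only the composite (spike move followed by a return path) can be read as a braid acting on $B$, and you must then prove two things your proposal leaves open: (i) that this composite, which preserves the closure-arc system, really is a product of the generators of Theorem~\ref{theo:hilden_mixed}, and (ii) that the outcome is independent, modulo the listed moves, of the choice of return path (over versus under the fixed strands). This is precisely where the paper spends its effort: it fixes partings via paths $p,p'$ in configuration space, shows two partings differ by a loop preserving the closure arcs (hence by $h_\sigma,h_\lambda,h_\mu,h_\tau$), handles the spike move by the $B_1B_1^{-1}$ bounding-disk argument, and treats separately the stabilization performed on a loop generator and the spike travelling to the left of $I_g$. Your remark that the $\tau$-contribution ``appears exactly when a fixed strand separates the two endpoints'' shows you see the phenomenon, but naming it is not proving it; as written, your dictionary from geometric moves to algebraic moves is not well defined.

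A secondary, smaller gap of the same kind: you state that each stabilization move, ``read algebraically, is an $st_k$ move.'' Definition~\ref{def:mixedstabilization} allows the stabilization at any vertex and with either crossing sign, whereas Definition~\ref{def:stk} is a very particular normal form (bottom of an even-numbered strand, positive crossing, arcs pulled to the right of all strands). The paper needs explicit intermediate steps --- spike moves to relocate the stabilization, an isotopy to trade the negative crossing for a positive one, and a compatible parting when the stabilization happens across a fixed strand --- before the $st_k$ form is reached. Your $st_k\to st_n$ conjugation argument is fine and matches the paper, but it only starts once these normalizations have been justified.
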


\noindent The moves $h_\sigma, h_\lambda, h_\mu, h_\tau$ reflect the generators of the mixed Hilden braid group (recall Figs.~\ref{fig:gen_hilden_mixed} and~\ref{fig:generator_extra_hilden}), so do not alter the number of strands, while the move $st_n$, defined above, realizes the stabilization move that may take place at the bottom right of the last strand numbered \(2n\). 
\begin{proof}
The one direction of the proof is obvious. Given now a link \(L\) in $H_g$ in standard position, we obtain by Theorem \ref{teo_braiding_plat_hand} an isotopic link (a plat) with \(n\) points of intersection with \(D_g \times \{0\}\), say \(Y^u = \{Y_1^u, \dots, Y^u_n\}\), and \(n\) points of intersection with \(D_g \times \{1\}\), say \(Y^l = \{Y_1^l, \dots, Y^l_n\}\).
 In order to find an element \(B \in \mathcal{B}_{g,2n}\) such that its plat closure is isotopic \(L\),  we have to perform  plat parting to the geometric mixed plat.  The plat parting comprises a sequence of specific spike moves and is described in the proof of Theorem~\ref{teo:lemma_algebraic_handleb}. 
 
 For each parting move there is a choice of performing it by pulling over or under strands to the right. 
 For proving Theorem~\ref{teo:main_equivalence} we will argue that these choices are included in the equivalence relation.  
Indeed, for a parting choose two paths \(p\) and \(p'\) in the configuration space of \(n\) points on \(D_g\) connecting, respectively, the two sets of boundary points \(Y^u\) and \(Y^l\) to a set of points \(\mathcal{C} = \{C_1, \dots, C_n\}\), where \(C_i\) is an internal point of the disk \(A\) (recall Fig.~\ref{fig:handlebody_disk}). This choice of paths is not unique: if \(p\) and \(q\) are two choices for \(Y^l\), the composition of \(p\) and the inverse of \(q\) is an element in the fundamental group of the configuration space of \(\mathcal{C}\), that could be realized as a composition of elements of the mixed Hilden braid group. The same construction holds for the set of upper boundary points. This means that any two element of \(\mathcal{B}_{g,2n}\) obtained as above for the same link \(L\) in standard position are connected by moves \(h_{\sigma}, h_{\lambda}, h_{\mu}, h_{\tau}\). 
It would be illuminating to consider the special case of the choice of the plat parting (recall Fig.~\ref{fig:generator_extra_hilden}) by pulling a pair of consecutive strands {\it over} a fixed strand versus the choice of pulling them {\it under} the same fixed strand. Such a choice difference shows in the final algebraic braid words as a mixed Hilden generator \(\tau_{i,2j-1}\) (recall Fig.~\ref{fig:generator_extra_hilden}).

By Lemma~\ref{Birman_lemma}, it is now sufficient to show how stabilization and spike moves change the algebraic mixed braid representative \(B\) of a link \(\overline{B}\), the plat closure of \(B\). 
Recall that a stabilization move only adds a trivial loop to \(\overline{B}\) at any point. Using the sequence of moves  depicted in Fig.~\ref{fig:stabilization_to_standard_anywhere} and sliding the stabilization, it is always possible to consider it taking place at the bottom right of an even strand, up to spike moves. Then, by spike moves, it is always possible to consider it happening on the top of the braid. After sliding the newly generated arcs to the right of the braid, we will obtain something as depicted in the rightmost part of Fig.~\ref{fig:st_k_move}. Moreover, as demonstrated in Fig.~\ref{fig:stabilization_to_standard_crossing} the move can be assumed to have a positive crossing only, as in move $st_k$ in Definition~\ref{def:stk}. It is straightforward to check that this move is the \(st_k\) move, if the stabilization is happening on the strand numbered \(2k\). 

\begin{figure}[H]
    \centering
    \includegraphics[width = \textwidth]{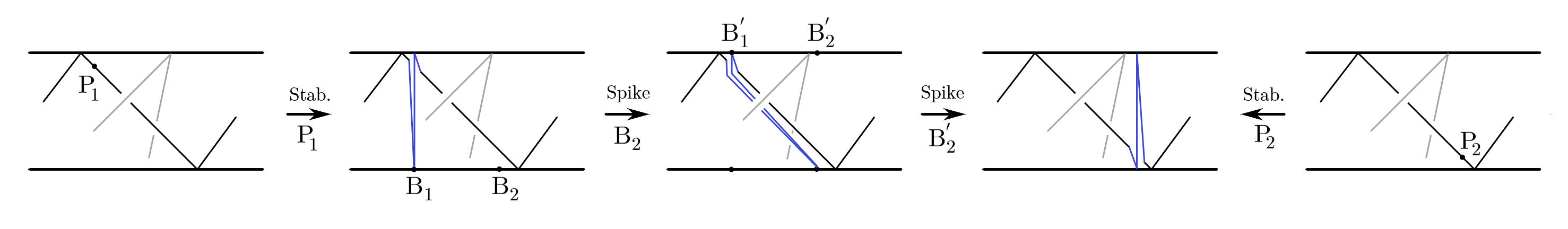}
    \caption{The procedure to slide a stabilization move to the bottom right part of an even numbered strand.}
    \label{fig:stabilization_to_standard_anywhere}
\end{figure}

\begin{figure}[H]
    \centering
    \includegraphics[width = \textwidth]{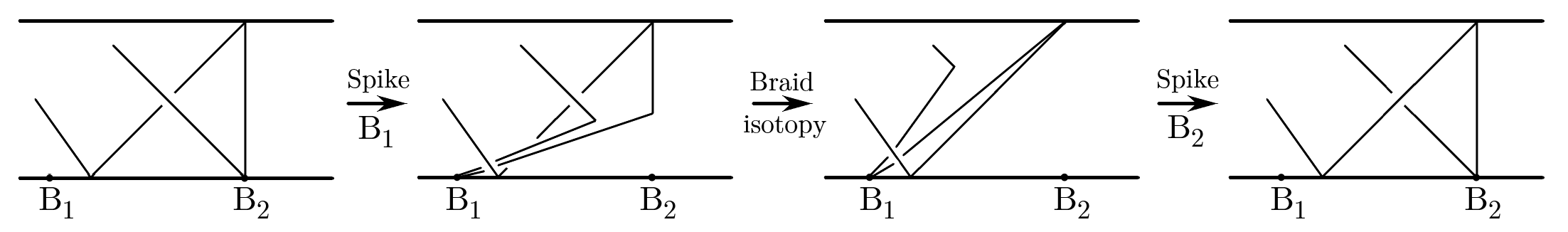}
    \caption{The procedure to turn a negative crossing into a positive one.}
    \label{fig:stabilization_to_standard_crossing}
\end{figure}

We shall now argue that in the equivalence theorem we only need the \(st_n\) move. Indeed, note that before performing an \(st_k\) move, one can bring the pair of strands (\(2k-1, 2k\)) to the far right of the braid by using \(h_\mu\) moves. After performing the stabilization \(st_n\), all the four strands \(2n-1, 2n, 2n+1, 2n+2\) can be brought again to the previous positions \(2k-1, 2k, 2k+1\) and \(2k+2\) by means of \(h_\mu\) moves. The result, after braid isotopy, is equivalent to the application of the \(st_k\) move to the initial braid. For the abstraction of the whole process, the reader is referred to Fig.~\ref{fig:stn_only}. 

\begin{figure}[H]
    \centering
    \includegraphics[width = \textwidth]{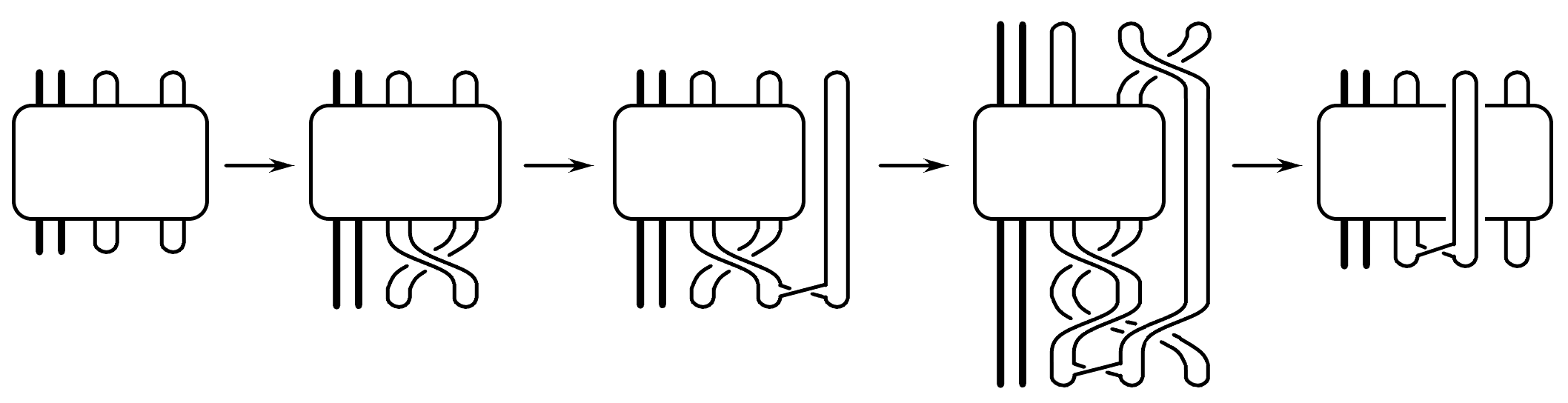}
    \caption{The process of performing \(st_k\) by means of \(h_\mu\), \(st_n\) and braid isotopy moves.}
    \label{fig:stn_only}
\end{figure}

 Suppose, now, that starting from an algebraic mixed plat we perform a stabilization move on a loop generator, obtaining a geometric mixed plat.  In this case we perform the parting in a way compatible to the layering of the stabilization move, so as to obtain an algebraic stabilization move, which is analyzed above. See Fig.~\ref{fig:geometric_stabilization}.

\begin{figure}[H]
    \centering
    \includegraphics[width = .5\textwidth]{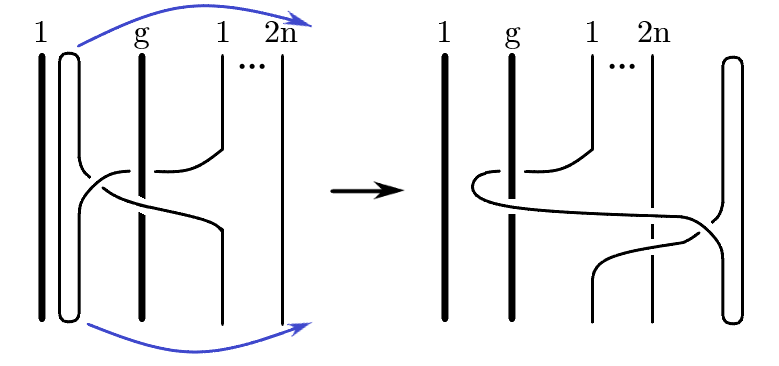}
    \caption{Transferring a geometric stabilization move to an algebraic one.}
    \label{fig:geometric_stabilization}
\end{figure}

Let us, finally, consider  a spike move taking place in \(\overline{B}\), obtaining \(\overline{B'}\). Suppose that the spike move brings the upper boundary point \(C_i\) into another point in the disk \(A\), say \(C_i'\). Following Fig.~\ref{fig:spike_move}, where \(C_i\) coincides with \(b_{i+1}\) and \(C_i'\) with \(b_{i+1}'\), we can find \(t_0 \in [0,1]\) such that \([b_i, b_{i+2}]\) lie in \(D_g \times \{t_0\}\). Considering the middle point of \([b_i, b_{i+2}]\) as \(C_i^*\), we can express the spike move as a path bringing \(C_i^*\) to \(C_i'\), following the median of the abstract triangle \([b_i, C_i', b_{i+2}]\). 
The plane \(D_g \times \{t_0\}\) divides the braid into two parts, say \(B_1\) the upper part and \(B_2\) the lower part. Now consider \(B_1 B_1^{-1}\), where the two spike moves connect together onto the base \([b_i, b_{i+2}]\). Since \(B_1 B_1^{-1}\) is a trivial element in \(B_{g,2n}\), it can be represented via elements of the Hilden braid group. In particular, the two spike moves bound a disk and can be represented as a product of the moves \(h_{\sigma}, h_{\lambda}, h_{\mu}, h_{\tau}\). This means that also the initial spike move could be represented via a combination of these four moves. The symmetric construction holds in the case of a spike move on a lower boundary point. 

Suppose, finally, that starting from an algebraic mixed plat we perform a spike move toward the left part of the braid, obtaining a geometric mixed plat.  In this case, we just use for the parting the inverse move.

The proof of the Theorem is now complete.
\end{proof} 

 In the spirit of Birman's proof of Theorem~\ref{Thm:birman_original}, 
  Theorem~\ref{teo:main_equivalence} can be re-stated equivalently as follows:

\begin{theorem}
Let \(L_i, i=1,2\) be two links in a handlebody \(H_g\), represented by the plat closure of \(B_i \in \mathcal{B}_{g, 2n_i}, i=1,2\). Then the two links are isotopic if and only if there exists an integer \(t \geq \max{n_1, n_2}\) such that, for each \(m \geq t\) the two elements: 
\[B_i' = B_i \sigma_{2n_i} \sigma_{2n_i +2} \dots \sigma_{2m-2} \in \mathcal{B}_{g, 2m}, \ \ i=1,2\]
are in the same double coset of \(\mathcal{B}_{g, 2m}\)  modulo the mixed Hilden subgroup \(K_{g, 2m}\). 
\end{theorem}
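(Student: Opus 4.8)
The plan is to show the equivalence of this double-coset formulation with Theorem~\ref{teo:main_equivalence}, which has already been proven. The key observation is that the two statements describe the same equivalence relation, just packaged differently: Theorem~\ref{teo:main_equivalence} works within the union $\cup_n \mathcal{B}_{g,2n}$ using local Hilden moves and stabilizations, while the present statement fixes a common strand number $2m$ and expresses the relation as membership in a single double coset of $K_{g,2m}$. First I would establish the forward direction: assuming $L_1 \simeq L_2$, I would apply Theorem~\ref{teo:main_equivalence} to obtain a finite sequence of moves $h_\sigma, h_\lambda, h_\mu, h_\tau, st_n$ and braid relations connecting $B_1$ to $B_2$. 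Since each $st_n$ move increases the strand number by two, I would choose $t$ large enough (at least $\max\{n_1,n_2\}$, and large enough to accommodate the maximal strand count appearing along the sequence) so that, for any $m \geq t$, both braids can be stabilized up to $\mathcal{B}_{g,2m}$ via the maps $B_i \mapsto B_i'$ described in the statement.

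The heart of the argument is translating the move-sequence into the double-coset language. The stabilized elements $B_i' = B_i \sigma_{2n_i} \sigma_{2n_i+2}\cdots\sigma_{2m-2}$ are precisely the result of repeatedly applying the $st$-type stabilization at the rightmost strand (recall the re-statement after Theorem~\ref{Thm:birman_original}, where the analogous classical move appears in Theorem~\ref{Thm:birman_original}). I would then argue that the four Hilden moves $h_\sigma, h_\lambda, h_\mu, h_\tau$ correspond exactly to left- and right-multiplication by generators of $K_{g,2m}$, as catalogued in Theorem~\ref{theo:hilden_mixed}: $h_\sigma$ by $\sigma_1$, $h_\lambda$ by $\lambda_1 = \sigma_2\sigma_1^2\sigma_2$, $h_\mu$ by $\mu_i$, and $h_\tau$ by $\tau_{j,1} = \alpha_j\sigma_1\alpha_j\sigma_1$. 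Because the moves act by multiplication on the \emph{left or right}, a sequence of them modifies a braid word $B$ to $k_1 B k_2$ with $k_1, k_2 \in K_{g,2m}$, which is exactly the statement that $B$ and $k_1 B k_2$ lie in the same double coset $K_{g,2m}\backslash \mathcal{B}_{g,2m} / K_{g,2m}$. Thus once both braids are brought into the common group $\mathcal{B}_{g,2m}$, the existence of a connecting sequence of Hilden moves is equivalent to $B_1'$ and $B_2'$ lying in the same double coset.

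For the converse direction I would reverse this reasoning: if $B_1'$ and $B_2'$ are in the same double coset modulo $K_{g,2m}$ for all $m \geq t$, then $B_2' = k_1 B_1' k_2$ for some $k_1, k_2 \in K_{g,2m}$. Decomposing $k_1, k_2$ into the generators of Theorem~\ref{theo:hilden_mixed} realizes the passage from $B_1'$ to $B_2'$ as a finite sequence of the moves $h_\sigma, h_\lambda, h_\mu, h_\tau$ (together with braid relations), and the passage from $B_i$ to $B_i'$ is a sequence of $st_n$ moves. By Theorem~\ref{teo:main_equivalence}, this shows $B_1$ and $B_2$ determine isotopic plats, hence $L_1 \simeq L_2$.

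The main obstacle I anticipate is verifying the stabilization-number bookkeeping carefully enough to justify that a \emph{single} threshold $t$ works \emph{uniformly} for all $m \geq t$, and that stabilizing further (passing from $\mathcal{B}_{g,2m}$ to $\mathcal{B}_{g,2m+2}$) genuinely preserves the double-coset relation. Concretely, one must check that the natural inclusion $\mathcal{B}_{g,2m} \hookrightarrow \mathcal{B}_{g,2m+2}$ composed with a rightmost $st$-stabilization sends the double coset of $B_i'$ in $K_{g,2m}$ into the double coset of the further-stabilized element in $K_{g,2m+2}$; this is the content of Birman's original stability argument for Theorem~\ref{Thm:birman_original}, and the subtlety is that the Hilden subgroup $K_{g,2m}$ must be compatible with $K_{g,2m+2}$ under these inclusions. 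This compatibility follows from the generator description in Theorem~\ref{theo:hilden_mixed} (the generators of $K_{g,2m}$ embed among those of $K_{g,2m+2}$), but it requires a careful check that no new identifications are forced upon stabilizing, which is precisely where Birman's interval of indices $[t,\infty)$ rather than a single value of $m$ becomes essential.
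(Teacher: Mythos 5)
Your overall strategy --- deducing the double-coset statement from Theorem~\ref{teo:main_equivalence} --- is exactly what the paper intends (the paper offers no separate proof, presenting this theorem as a re-formulation ``in the spirit of Birman's proof''), and your converse direction is sound: decomposing $k_1,k_2$ into the generators of Theorem~\ref{theo:hilden_mixed} and realizing the chain $B_i\mapsto B_i'$ by iterated $st$ moves does reduce matters to Theorem~\ref{teo:main_equivalence}. The gap is in the forward direction, precisely at the point you flag as ``the main obstacle'' and then dismiss. Theorem~\ref{teo:main_equivalence} produces a zigzag of moves interleaving Hilden moves at \emph{varying} strand numbers with stabilizations and destabilizations; to convert this into a single double-coset relation at a fixed level $2m$ you must commute the Hilden factors past the stabilization letters, i.e.\ show that $B_2=k_1B_1k_2$ with $k_1,k_2\in K_{g,2n}$ implies $B_2\sigma_{2n}\in K_{g,2n+2}\,B_1\sigma_{2n}\,K_{g,2n+2}$. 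You claim this ``follows from the generator description in Theorem~\ref{theo:hilden_mixed} (the generators of $K_{g,2m}$ embed among those of $K_{g,2m+2}$)'', but the inclusion $K_{g,2n}\subset K_{g,2n+2}$ is not the issue: what the factorization actually requires is $\sigma_{2n}^{-1}k_2\sigma_{2n}\in K_{g,2n+2}$, and this is false in general. For example $k_2=\sigma_{2n-1}\in K_{g,2n}$, yet $\sigma_{2n}^{-1}\sigma_{2n-1}\sigma_{2n}$ induces the transposition $(2n-1,\,2n+1)$ on the endpoints, which does not preserve the cap pairing $\{2n-1,2n\},\{2n+1,2n+2\}$; hence $\sigma_{2n}^{-1}\sigma_{2n-1}\sigma_{2n}\notin K_{g,2n+2}$, and conjugation by the stabilization letter does not keep you inside the Hilden subgroup.

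Consequently, double-coset equivalence at level $t$ cannot be pushed up to level $m>t$ by algebra alone, so your single uniform threshold $t$ is not yet justified. The repair (which is Birman's, and what the paper's appeal to her proof implicitly delegates) is geometric rather than inductive: for each $m\geq t$ \emph{separately}, one re-runs the realization of the isotopy as a sequence of spike, Hilden and stabilization moves at level $2m$; the extra $m-t$ stabilized strand pairs are parked on the right and never obstruct the argument, because $t$ is chosen to exceed the maximal plat width occurring along the isotopy. Equivalently, one needs a reordering lemma stating that any zigzag of moves from Theorem~\ref{teo:main_equivalence} can be rearranged as: stabilize along the rightmost chain $\sigma_{2n_i}\sigma_{2n_i+2}\cdots\sigma_{2m-2}$, then apply Hilden moves at the top level only, then destabilize. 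Without one of these two ingredients the forward direction of your argument does not close.
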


\begin{remark} \rm 
In \cite{cattabriga2018markov} Cattabriga and Gabrovšek formulate and prove the plat equivalence for links in a thickened surface. Following  their work and suppressing the $b$-type loop generators (looping around the handles), one would obtain the statement of Theorem~\ref{theo:hilden_mixed}. 
\end{remark}

\bibliographystyle{plain}
\bibliography{sample}

\begin{thebibliography}{10}

\bibitem{alexander1923lemma}
J.W. Alexander.
\newblock A lemma on systems of knotted curves.
\newblock {\em Proc. Natl. Acad. Sci. USA}, 9(3):93--95, 1923.

\bibitem{artin1947theory}
E.~Artin.
\newblock Theory of braids.
\newblock {\em Ann. of Math.}, pages 101--126, 1947.

\bibitem{bankwitz1934viergeflechte}
C.~Bankwitz and H.G. Schumann.
\newblock {\"U}ber viergeflechte.
\newblock In {\em Abhandlungen aus dem Mathematischen Seminar der
  Universit{\"a}t Hamburg}, volume~10, pages 263--284. Springer, 1934.

\bibitem{bellingeri2004presentations}
P.~Bellingeri.
\newblock On presentations of surface braid groups.
\newblock {\em J. Algebra}, 274:543--563, 2004.

\bibitem{bellingeri2012hilden}
P.~Bellingeri and A.~Cattabriga.
\newblock Hilden braid groups.
\newblock {\em J. Knot Theory Ramifications}, 21(03):1250029, 2012.

\bibitem{bigelow2002homological}
S.~Bigelow.
\newblock A homological definition of the {J}ones polynomial.
\newblock {\em Geom. Topol. Monogr.}, 4:2941, 2002.

\bibitem{birman1976stable}
J.S. Birman.
\newblock On the stable equivalence of plat representations of knots and links.
\newblock {\em Canad. J. Math.}, 28(2):264--290, 1976.

\bibitem{birman2005braids}
J.S. Birman and T.E. Brendle.
\newblock Braids: a survey.
\newblock In {\em Handbook of knot theory}, pages 19--103. Elsevier, 2005.

\bibitem{brunn1897verknotete}
H.~Brunn.
\newblock {\"U}ber verknotete kurven.
\newblock {\em Verh. des intern. Math. Congr.}, 1:256--259, 1897.

\bibitem{cattabriga2018markov}
A.~Cattabriga and B.~Gabrov{\v{s}}ek.
\newblock A {M}arkov theorem for generalized plat decomposition.
\newblock {\em Ann. Sc. Norm. Super. Pisa Cl. Sci.}, 20:1--29, 2020.

\bibitem{cavicchioli2023passing}
P.~Cavicchioli and S.~Lambropoulou.
\newblock Passing from plat closure to standard closure of braids in
  $\mathbb{R}^3$, in handlebodies and in thickened surfaces.
\newblock {\em arXiv preprint arXiv:2308.07291, Submitted for publication},
  2023.

\bibitem{doll1993generalization}
H.~Doll.
\newblock A generalization of bridge number to links in arbitrary
  three-manifolds.
\newblock {\em Math. Ann.}, 1993.

\bibitem{fadell1962configuration}
E.~Fadell and L.~Neuwirth.
\newblock Configuration spaces.
\newblock {\em Math. Scand.}, 10:111--118, 1962.

\bibitem{garnerone2006quantum}
S.~Garnerone, A.~Marzuoli, and M.~Rasetti.
\newblock Quantum computation of universal link invariants.
\newblock {\em Open Syst. Inf. Dyn.}, 13(4):373--382, 2006.

\bibitem{Garnerone2007QuantumAB}
S.~Garnerone, A.~Marzuoli, and M~Rasetti.
\newblock Quantum automata, braid group and link polynomials.
\newblock {\em Quantum Inf. Comput.}, 7:479--503, 2007.

\bibitem{gonzalez2001new}
J.~Gonz{\'a}lez-Meneses.
\newblock New presentations of surface braid groups.
\newblock {\em J. Knot Theory Ramifications}, 10(03):431--451, 2001.

\bibitem{haring2002knot}
R.~H{\"a}ring-Oldenburg and S.~Lambropoulou.
\newblock Knot theory in handlebodies.
\newblock {\em J. Knot Theory Ramifications}, 11(06):921--943, 2002.

\bibitem{hilden1975generators}
H.~Hilden.
\newblock Generators for two groups related to the braid group.
\newblock {\em Pacific J. Math.}, 59(2):475--486, 1975.

\bibitem{kassel2008braid}
C.~Kassel and V.~Turaev.
\newblock {\em Braid groups}, volume 247.
\newblock Springer Science \& Business Media, 2008.

\bibitem{lambropoulou1994torus}
S.~Lambropoulou.
\newblock Solid torus links and {H}ecke algebras of {B}-type.
\newblock {\em Quantum Topol.}, pages 225--245, 1994.

\bibitem{lambropoulou2000braid}
S.~Lambropoulou.
\newblock Braid structures in knot complements, handlebodies and 3--manifolds.
\newblock In {\em Knots In Hellas' 98}, pages 274--289. World Scientific, 2000.

\bibitem{lambropoulou1997markov}
S.~Lambropoulou and C.P. Rourke.
\newblock Markov's theorem in 3-manifolds.
\newblock {\em Topology Appl.}, 78(1-2):95--122, 1997.

\bibitem{lambropoulou2006algebraic}
S.~Lambropoulou and C.P. Rourke.
\newblock Algebraic {M}arkov equivalence for links in three-manifolds.
\newblock {\em Compos. Math.}, 142(4):1039--1062, 2006.

\bibitem{markov1935freie}
A.A. Markov.
\newblock {\"U}ber die freie {\"a}quivalenz geschlossener z{\"o}pfe (in
  german), recueil mathematique moscou, 1, 73--78.
\newblock {\em Mat. Sb}, 43:1936, 1935.

\bibitem{jordan2008estimating}
P.W. Shor and S.P. Jordan.
\newblock Estimating {J}ones polynomials is a complete problem for one clean
  qubit.
\newblock {\em Quantum Info. Comput.}, 8(8):681–714, 2008.

\bibitem{sossinsky1992preparation}
A.B. Sossinsky.
\newblock Preparation theorems for isotopy invariants of links in 3-manifolds.
\newblock In {\em Quantum Groups}, pages 354--362. Springer, 1992.

\bibitem{weinberg1939equivalence}
N.~Weinberg.
\newblock Sur l’equivalence libre des tresses ferm{\'e}e.
\newblock In {\em Comptes Rendus (Doklady) de l’Acad{\'e}mie des Sciences de
  l’URSS}, volume~23, pages 215--216, 1939.

\bibitem{yun2011braid}
K.~Yun.
\newblock On braid-plat relations in {C}onway function.
\newblock {\em Honam Math. J.}, 33(3):407--418, 2011.

\end{thebibliography}

\end{document}